\numberwithin{equation}{section}
\theoremstyle{plain}
\newtheorem{thm}{Theorem}
\theoremstyle{plain}
\newtheorem{lem}{Lemma}[section]
\theoremstyle{plain}
\newtheorem*{theorema}{Theorem~A}
\theoremstyle{plain}
\newtheorem*{theoremb}{Theorem~B}
\theoremstyle{remark}
\theoremstyle{definition}
\newtheorem*{case1}{Case~1}
\theoremstyle{definition}
\newtheorem*{case2}{Case~2}
\theoremstyle{definition}
\newtheorem*{case2a}{Case~2A}
\theoremstyle{definition}
\newtheorem*{case2b}{Case~2B}
\def\bfs{\mathbf{s}}
\def\bfv{\mathbf{v}}
\def\bfw{\mathbf{w}}
\def\dd{\mathrm{d}}
\def\eps{\varepsilon}
\def\Rr{\mathbb{R}}
\def\Zz{\mathbb{Z}}
\def\AAA{\mathcal{A}}
\def\BBB{\mathcal{B}}
\def\EEE{\mathcal{E}}
\def\III{\mathcal{I}}
\def\JJJ{\mathcal{J}}
\def\LLL{\mathcal{L}}
\def\NNN{\mathcal{N}}
\def\PPP{\mathcal{P}}
\def\SSS{\mathcal{S}}
\def\TTT{\mathcal{T}}
\def\UUU{\mathcal{U}}
\def\VVV{\mathcal{V}}
\def\WWW{\mathcal{W}}
\def\XXX{\mathcal{X}}
\def\frakI{\mathfrak{I}}
\def\MMMM{\mathscr{M}}
\DeclareMathOperator{\length}{length}
\renewcommand{\le}{\leqslant}
\renewcommand{\ge}{\geqslant}
\title[Super-fast spreading]
{Super-fast spreading of\\
billiard orbits in rational polygons\\
and geodesics on translation surfaces}
\author[Beck]{J. Beck}
\address{Department of Mathematics, Hill Center for the Mathematical Sciences,
Rutgers University, Piscataway NJ 08854, USA}
\email{jbeck@math.rutgers.edu}
\author[Chen]{W.W.L. Chen}
\address{School of Mathematical and Physical Sciences, Faculty of Science and Engineering,
Macquarie University, Sydney NSW 2109, Australia}
\email{william.chen@mq.edu.au}
\keywords{geodesics, billiards, density, uniformity}
\subjclass[2010]{11K38, 37E35}
\begin{document}

\begin{abstract}
In this paper, we show that billiard orbits in rational polygons and geodesics on translation surfaces exhibit super-fast spreading,
an optimal time-quantitative majority property about the corresponding linear flow that implies uniformity in almost every direction.
\end{abstract}

\maketitle

\thispagestyle{empty}

%
%

\section{Introduction}\label{sec1}

A polygon is said to be rational if every angle is a rational multiple of~$\pi$.
It is well known, via the concept of unfolding introduced by K\"{o}nig and Sz\"{u}cs~\cite{KS13} in 1913 for the unit square
and extended by Fox and Kershner~\cite{FK36} in 1936, that billiard in a rational polygon
is equivalent to $1$-direction geodesic flow on a translation surface.
Here a translation surface is constructed from a finite collection of polygons on the plane,
together with appropriate pairings of sides of equal length and direction, \textit{i.e.} angle in the interval $[0,2\pi)$, identified via translation.
Geodesic flow on such a surface is therefore $1$-direction linear flow.

The first pioneering result on $1$-direction geodesic flow on translation surfaces is due to Katok and Zemlyakov~\cite{KZ75} in 1975,
and concerns density.

\begin{theorema}
Let $\PPP$ be a translation surface.
Then, apart from a countable set of directions, any $1$-direction geodesic is dense on $\PPP$
unless it hits a vertex of $\PPP$ and becomes undefined.
Furthermore, every exceptional direction is represented by a saddle connection of~$\PPP$.
\end{theorema}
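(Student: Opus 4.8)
The plan is to prove the contrapositive of the density assertion, which, combined with a counting argument for the ``furthermore'' part, also delivers the countability claimed in the first sentence. So the two things to establish are: (i) the set of directions represented by a saddle connection of $\PPP$ is countable; and (ii) if a direction $\theta$ is \emph{not} represented by any saddle connection, then the $\theta$-directional flow on $\PPP$ is minimal, in the sense that every bi-infinite $\theta$-geodesic (one never meeting a vertex) is dense in $\PPP$. Granting (ii), a $\theta$-geodesic that is not dense must meet a vertex and become undefined, so every exceptional direction lies among the saddle-connection directions, which by (i) form a countable set; this simultaneously proves the first sentence (with the countable set taken to be the saddle-connection directions) and the last one.

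For (i): a saddle connection is determined by its initial vertex (finitely many choices) and by its holonomy vector in $\Rr^2$, and the set of holonomy vectors of all saddle connections is a \emph{discrete} subset of $\Rr^2$ --- for each $L>0$ there are only finitely many saddle connections of length at most $L$, by a standard compactness argument on the compact surface $\PPP$. A discrete subset of $\Rr^2$ is countable, so there are countably many saddle connections, hence countably many saddle-connection directions.

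For (ii), after rotating so that $\theta$ is horizontal, I would pass to a first-return map. Choose a transversal $I$ to the horizontal flow --- for instance via the zippered-rectangle construction attached to the direction $\theta$ --- so that the first-return map $T\colon I\to I$ is an interval exchange transformation on finitely many subintervals and the flow boxes over $I$ tile $\PPP$. The orbit segments realizing the discontinuities of $T$ and of $T^{-1}$ are precisely the horizontal separatrices issuing backward, resp. forward, from the vertices of $\PPP$; the hypothesis that $\theta$ carries no saddle connection says exactly that no forward separatrix ever re-enters a backward separatrix, i.e. that the orbits under $T$ of its discontinuity points are infinite and pairwise disjoint --- Keane's infinite-distinct-orbit condition. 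By Keane's minimality theorem, $T$ is then minimal: every $T$-orbit is dense in $I$. Lifting through the suspension --- a bi-infinite $\theta$-geodesic meets $I$ in a full two-sided $T$-orbit, which is dense in $I$, and $\PPP$ is the image of $I$ under the flow over a bounded time interval --- shows that the $\theta$-geodesic is dense in $\PPP$, which is (ii).

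The main obstacle is step (ii), and within it two points: constructing the transversal so that the return map is genuinely a \emph{finite} interval exchange with the tiling property (this is where compactness of $\PPP$ and the absence of a horizontal saddle connection are really used, to rule out orbits of unbounded return time), and the dictionary ``no saddle connection $\Leftrightarrow$ Keane's condition'' together with Keane's minimality theorem itself, whose proof via Rauzy induction and the non-existence of wandering intervals is the analytic heart of the matter. A more self-contained alternative for (ii) avoids interval exchanges: if a bi-infinite $\theta$-geodesic had closure $X\subsetneq\PPP$, then $U=\PPP\setminus X$ would be a nonempty proper open flow-invariant set whose frontier $\partial U$ is, in flow-box coordinates, a union of horizontal segments; a maximal such segment avoiding the vertices is either a bi-infinite horizontal geodesic or terminates at vertices at both ends, and a Poincar\'e-recurrence ``no wandering strip'' argument excludes the former, forcing a saddle connection in direction $\theta$ --- the delicate point there being to make the strip argument rigorous near returns at which the local return map may be discontinuous.
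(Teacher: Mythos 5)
Theorem~A is the classical 1975 result of Katok and Zemlyakov; the paper states it as background and cites it to \cite{KZ75} without supplying a proof, so there is no in-paper argument against which to compare yours. Judged on its own, your outline is a correct and standard route to the result. The decomposition into (i) countability of the set of saddle-connection directions and (ii) minimality of the directional flow in any direction carrying no saddle connection is exactly the right one, and correctly makes both the first sentence and the ``furthermore'' fall out at once. Your argument for (i) is sound: discreteness of the set of holonomy vectors of saddle connections, hence finitely many below any length bound, hence countably many in all; the compactness needed is that the flat metric is complete away from the finite singular set so that a unit-speed geodesic of length at most $L$ from a fixed vertex is determined by a point in a compact set of initial directions, and hitting a singularity is a closed condition with discrete holonomy image. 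For (ii) you offer two genuine proofs. The interval-exchange route is correct in outline, and you identify the two real issues honestly: building a transversal whose first-return map is a bona fide finite interval exchange (this needs no horizontal saddle connection precisely to keep return times bounded and the number of discontinuities finite, and is cleanest if one endpoint of the transversal is placed at a singularity), and the dictionary between ``no saddle connection in direction $\theta$'' and Keane's infinite-distinct-orbit condition, after which Keane's 1975 minimality theorem closes the argument. Your ``self-contained alternative'' --- showing a nonempty proper closed invariant set has frontier made of horizontal leaves and then ruling out bi-infinite boundary leaves by a Poincar\'{e}-recurrence/wandering-strip argument so that a saddle connection is forced --- is in fact the one closest in spirit to the original Katok--Zemlyakov proof, and you correctly flag the delicate point, namely that a maximal horizontal segment in the frontier could a priori be a singular leaf (one end at a vertex, one end infinite) and one must use area/recurrence to force the remaining end also to terminate at a vertex. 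Neither gap is a wrong idea; both are places where a full write-up must supply the standard technical lemmas.
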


Here, a saddle connection is a finite $1$-direction geodesic segment that joins two vertices, not necessarily distinct,
of the defining polygons of the translation surface.

The second pioneering result, due to Kerckhoff, Masur and Smillie~\cite{KMS86} in 1986, concerns the stronger property of uniformity.

\begin{theoremb}
Let $\PPP$ be a translation surface.
Then, for almost every direction, any $1$-direction geodesic is uniformly distributed on $\PPP$
unless it hits a vertex of $\PPP$ and becomes undefined.
\end{theoremb}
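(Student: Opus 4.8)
The plan is to reduce Theorem~B, via the classical dictionary with the Teichm\"uller geodesic flow, to a recurrence property of that flow for Lebesgue-almost every direction, and then to obtain the recurrence from a counting estimate for saddle connections. First I would record a soft reformulation: a continuous flow on a compact metric space is uniquely ergodic if and only if every orbit is uniformly distributed for its unique invariant probability measure, so --- the flat area on $\PPP$ being invariant under the $\theta$-directional flow for every $\theta$ --- Theorem~B is equivalent to the assertion that for almost every $\theta$ the $\theta$-directional flow on $\PPP$ is uniquely ergodic (the orbits hitting a vertex being the stated exceptions). Fixing $\PPP$ and applying the rotation $r_\theta\in\mathrm{SO}(2)$, the $\theta$-flow on $\PPP$ is conjugate to the vertical flow on $r_\theta\PPP$; by Theorem~A the vertical foliation of $r_\theta\PPP$ is minimal for all but countably many $\theta$, so it suffices to show that for almost every such $\theta$ the vertical flow on $r_\theta\PPP$ has a unique invariant probability measure. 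Let $g_t=\mathrm{diag}(\ee^{t},\ee^{-t})$ act on the stratum $\HHH\ni\PPP$ of unit-area translation surfaces --- the Teichm\"uller geodesic flow --- together with the ambient $\mathrm{SL}(2,\Rr)$-action, and recall that a subset of $\HHH$ is relatively compact if and only if the length of the shortest saddle connection is bounded away from $0$ on it.

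The first ingredient is \emph{Masur's criterion}: if the forward orbit $\{g_t(r_\theta\PPP):t\ge0\}$ returns infinitely often to some fixed compact subset of $\HHH$, then the vertical flow on $r_\theta\PPP$ is uniquely ergodic. I would prove this by the usual Hodge-theoretic argument. An invariant probability measure for the (minimal) vertical foliation is the same datum as a transverse invariant measure, and via period integration this is a cohomology class; the flow is uniquely ergodic exactly when the cone of such classes is a single ray. Now $g_t$ acts on period coordinates by scaling the horizontal components by $\ee^{t}$ and the vertical components by $\ee^{-t}$, so two independent rays in the cone would, after pairing the associated period vectors against a fixed transverse class and against the (constant) area form, be forced to stay comparable along a sequence $t_n\to\infty$ on which $g_{t_n}(r_\theta\PPP)$ remains compact --- where the underlying surface has a definite lower bound on its shortest saddle connection, hence uniformly bounded cylinder circumferences --- while at the same time being driven apart by the $\ee^{t_n}$-rescaling. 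That is the required contradiction; I expect this step to be routine once the compactness input is in hand.

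It then remains to prove that $B:=\{\theta\in[0,2\pi):\text{shortest saddle connection of }g_t(r_\theta\PPP)\to0\text{ as }t\to+\infty\}$ is Lebesgue-null, and for this I claim it suffices to prove the uniform single-time estimate
\[
\bigl|\{\theta\in[0,2\pi):\text{shortest saddle connection of }g_t(r_\theta\PPP)<\delta\}\bigr|\le C(\PPP)\,\delta\qquad(t\ge0,\ \delta>0).
\]
Indeed, with $A_{\delta,T}=\{\theta:\text{shortest saddle connection of }g_t(r_\theta\PPP)<\delta\ \text{for all }t\ge T\}$, the $A_{\delta,T}$ increase with $T$ and each lies in the set above at time $t=T$, so $|A_{\delta,T}|\le C(\PPP)\delta$; since $B\subseteq\bigcap_{\delta>0}\bigcup_{T}A_{\delta,T}$ we get $|B|\le\inf_{\delta>0}C(\PPP)\delta=0$. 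To prove the displayed estimate I would use that $\mathrm{SL}(2,\Rr)$ acts affinely on translation charts, so the saddle connections of $g_t(r_\theta\PPP)$ are exactly the images of those of $\PPP$, with holonomy vectors transformed by $g_tr_\theta\in\mathrm{SL}(2,\Rr)$; a direct computation then shows that a saddle connection of $\PPP$ of holonomy length $\ell$ can render the shortest saddle connection of $g_t(r_\theta\PPP)$ smaller than $\delta$ only for $\theta$ in a set of measure $\lesssim\delta\ee^{-t}/\ell$, and only when $\delta\ee^{-t}\lesssim\ell\lesssim\delta\ee^{t}$. Summing these contributions over all saddle connections of $\PPP$ and inserting Masur's quadratic growth bound $\#\{\text{saddle connections of }\PPP\text{ of length }\le L\}=O(L^2)$ bounds the total by $O(\delta)$, uniformly in $t$. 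Together with Masur's criterion --- applied to each $\theta\notin B$, whose Teichm\"uller orbit is then recurrent --- this completes the argument.

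The step I expect to be the genuine obstacle is the quadratic growth bound for the saddle-connection counting function of $\PPP$; the exponent $2$ is precisely what is needed for the sum above to converge uniformly in $t$, so no softer power saving will do. This is the point at which the existence of the finite, ergodic, $\mathrm{SL}(2,\Rr)$-invariant Masur--Veech measure on $\HHH$ is really used --- through a Siegel--Veech-type averaging of the counting functions across the stratum --- and it is far from formal. (Historically Kerckhoff, Masur and Smillie bypassed the sharp count by phrasing everything through Rauzy--Veech renormalization of interval exchanges, where the analogous estimate becomes a more elementary summation; that substitution does not affect Masur's criterion or the measure-theoretic extraction above.)
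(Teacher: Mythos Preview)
Your outline is a correct sketch of the now-standard ``Masur's criterion plus recurrence'' proof of Theorem~B, and you correctly identify the quadratic saddle-connection count as the deep input. However, you should be aware that the paper does \emph{not} prove Theorem~B directly: it is quoted as the 1986 result of Kerckhoff, Masur and Smillie~\cite{KMS86}, with no proof given. What the paper does prove is its own Theorem~\ref{thm1} (super-fast spreading), which --- as noted in the introduction --- implies Theorem~B by letting $\eps\to0$ along a sequence and intersecting the resulting full-measure sets of good directions.

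The route taken in the paper to Theorem~\ref{thm1} is completely different from yours. You work in the moduli space: the $\mathrm{SL}(2,\Rr)$-action, the Teichm\"uller flow $g_t$, compactness via shortest saddle connection, and a Hodge-theoretic Masur criterion. The paper never leaves the fixed surface~$\PPP$. It encodes the $\theta$-flow by an interval exchange $T_\theta$ on $[0,1)$, defines an explicit ``bad'' set $\Omega(n;c_0)$ of directions close to short saddle connections or closed geodesics (this is where Lemma~\ref{lem2.1}, the same quadratic count you invoke, enters), and for $\theta$ outside this set runs a hands-on combinatorial argument: the transportation process of Lemmas~\ref{lem2.2}--\ref{lem2.3} gives anti-crowdedness, and then an $L^2$ balance hierarchy (Section~\ref{sec3}) together with a case analysis forces near-uniform visit counts to small squares with explicit constants. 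The payoff is a time-quantitative statement with computable $C_3(\PPP;\eps)$; your approach yields the qualitative almost-everywhere result but no effective rate. Conversely, your argument is conceptually cleaner and connects to the broader Teichm\"uller machinery, whereas the paper's is self-contained and elementary but requires the long Sections~\ref{sec3}--\ref{sec7}. Both rest, at bottom, on the same Masur/Vorobets estimate $\vert\NNN_2(\PPP;T)\vert\le C^\star T^2$.
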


Unfortunately, these two extremely interesting and important results seem to have the same limitation,
in that they do not give any information on the time scale on the convergence to density and uniformity.
To address this limitation, Vorobets~\cite{vorobets97} has in 1997 established a time-quantitative version of Theorem~B, using new ideas.
Here, uniformity is tested with respect to \textit{large} sets, and the explicit error term given is rather weak.

In this paper, we use a method completely different from that of Vorobets.
Given an arbitrary translation surface, for the majority of directions, we can establish an optimal form of time-quantitative uniformity for the geodesics.
It is in fact an optimal form of \textit{local} uniformity, which nevertheless implies \textit{global} uniformity,
and we refer to it as \textit{super-fast spreading} of the geodesic flow.

To give an illustration of some optimal local properties concerning the distribution of geodesics, we turn to geodesic flow on the unit torus $[0,1)^2$.
This is an integrable dynamical system, and the unit torus is the simplest translation surface.
It has a basically complete time-quantitative theory, due to the classical work of Weyl, Hardy, Littlewood, Ostrowski, Khinchin and others.

Let $\LLL_\alpha(t)$, $t\ge0$, be a half-infinite geodesic with direction $(1,\alpha)$ on the unit torus $[0,1)^2$, with the usual arc-length parametrization.

(i)
The geodesic $\LLL_\alpha(t)$, $t\ge0$, is superdense on the unit torus $[0,1)^2$ if and only if $\alpha$ is a badly approximable number.
Here superdensity means that there exists a constant $C_1=C_1(\alpha)>0$ such that for every positive integer~$N$,
the finite geodesic segment $\LLL_\alpha(t)$, $0\le t\le C_1N$, gets $(1/N)$-close to every point of $[0,1)^2$.

(ii)
The geodesic $\LLL_\alpha(t)$, $t\ge0$, is super-micro-uniform on the unit torus $[0,1)^2$ if and only if $\alpha$ is a badly approximable number.
Here super-micro-uniformity means that for every $\eps>0$, there exists a constant $C_2=C_2(\alpha;\eps)>0$
such that for every positive integer $N$ and every aligned square $Q$ of side length~$1/N$,
\begin{displaymath}
\frac{(1-\eps)C_2}{N}<\vert\{0\le t\le C_2N:\LLL_\alpha(t)\in Q\}\vert<\frac{(1+\eps)C_2}{N}.
\end{displaymath}
Note that since every \textit{large} square can be decomposed into \textit{small} squares, it follows that super-micro-uniformity
implies uniformity in the sense of Weyl.

(iii)
Geodesic flow on the unit torus $[0,1)^2$ exhibits super-fast spreading.
Given any $\eps>0$, there exists an explicitly computable constant $C_3=C_3(\eps)$ such that for every positive integer~$N$,
there exists a set of directions $\Gamma(N;\eps)\subset[0,2\pi)$ with measure $\lambda(\Gamma(N;\eps))\ge(1-\eps)2\pi$
such that for any direction $\theta\in\Gamma(N;\eps)$ and any square $Q$ of side length $1/N$ on~$[0,1)^2$,
any geodesic segment $\LLL_\theta(t)$, $0\le t\le C_3N$, with direction $\theta$ and length~$C_3N$, satisfies
\begin{displaymath}
\frac{(1-\eps)C_3}{N}<\vert\{0\le t\le C_3N:\LLL_\theta(t)\in Q\}\vert<\frac{(1+\eps)C_3}{N}.
\end{displaymath}
Note first of all that super-fast spreading is a majority property about geodesic flow, and not about any geodesic with any given direction,
so it is quite different from superdensity and super-micro-uniformity.
On the other hand, if $\eps>0$, then intuitively $\eps$-uniformity tends to uniformity, and the exceptional set of angles tends to measure zero.
Indeed, super-fast spreading of geodesic flow on the unit torus $[0,1)^2$ implies uniformity in almost every direction.

We define super-fast spreading on an arbitrary translation surface $\PPP$ of area $1$ in an analogous way.
Given any $\eps>0$, there exists an explicitly computable constant $C_3=C_3(\PPP;\eps)$ such that for every positive integer~$N$,
there exists a set of directions $\Gamma(\PPP;N;\eps)\subset[0,2\pi)$ with measure $\lambda(\Gamma(\PPP;N;\eps))\ge(1-\eps)2\pi$
such that for any direction $\theta\in\Gamma(\PPP;N;\eps)$ and any square $Q$ of side length $1/N$ on~$\PPP$,
any geodesic segment $\LLL_\theta(t)$, $0\le t\le C_3N$, with direction $\theta$ and length~$C_3N$, satisfies
\begin{displaymath}
\frac{(1-\eps)C_3}{N}<\vert\{0\le t\le C_3N:\LLL_\theta(t)\in Q\}\vert<\frac{(1+\eps)C_3}{N}.
\end{displaymath}
Here we assume that the half-infinite geodesic $\LLL_\theta(t)$, $t\ge0$, does not hit a vertex of~$\PPP$.

The main result of this paper is the following.

\begin{thm}\label{thm1}
Let $\PPP$ be an arbitrary translation surface of area~$1$.
Then geodesic flow on $\PPP$ exhibits super-fast spreading.
\end{thm}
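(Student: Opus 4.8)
The plan is to pass from $\PPP$ to an explicit combinatorial model of the $1$-direction flow, reduce Theorem~\ref{thm1} to a time-quantitative equidistribution statement for a first-return map, and then prove that statement by peeling off cylinders so as to import the optimal torus theory recorded in~(iii) above. First I would present $\PPP$ as a finite union of polygons in $\Rr^2$ with sides identified in pairs by translations; after triangulating and then cutting along a finite family of horizontal and vertical segments, one may assume that outside a finite union of saddle connections $\PPP$ is tiled by finitely many axis-parallel rectangles $R_1,\dots,R_k$, vertical sides glued to vertical sides and horizontal sides to horizontal sides. A geodesic $\LLL_\theta(t)$ then crosses these rectangles along parallel segments, and the number of crossings within a segment of length $T$ is at most $c(\PPP)T$, where $c(\PPP)$ depends only on the shortest side occurring among the $R_i$. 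Recording the successive rectangles met on a fixed transversal to $\theta$ exhibits the flow as a suspension over an interval exchange; once $N$ exceeds the reciprocal of the smallest rectangle dimension, a square $Q$ of side $1/N$ lies inside a single $R_i$, and the visits of $\LLL_\theta(t)$, $0\le t\le T$, to $Q$ are in bounded-to-one correspondence with visits of the induced interval-exchange orbit to a subinterval of length $1/N$, weighted by the uniformly bounded return heights. This reduces Theorem~\ref{thm1} to the following: for each $\eps>0$ there is $C_3=C_3(\PPP;\eps)$ so that for every $N$ a set of directions of measure at least $(1-\eps)2\pi$ makes every induced orbit spend, among its first $\asymp C_3N$ returns, a proportion of returns in every length-$1/N$ subinterval of the transversal equal, to within a factor $1\pm\eps$, to the relative length of that subinterval; directions whose geodesic hits a vertex are harmless, forming a null set by Theorem~A.

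To prove the reduced statement I would argue by induction on the complexity of $\PPP$ (say on $2g-2$ plus the number of singularities), the base case being the flat torus, where~(iii) above is precisely what is needed, and where, after rescaling to area~$1$, the constant $C_3$ depends only on $\eps$. For the inductive step, fix a target direction; by a classical theorem of Masur, $\PPP$ has a cylinder in a direction $\theta_0$ arbitrarily close to it. For $\theta$ near $\theta_0$ the geodesic spirals inside this cylinder many times before crossing a boundary saddle connection, and while it stays inside the cylinder its behaviour is that of a straight line confined to a flat strip, whose return map to the bottom edge is a rotation of the circle; so~(iii), applied after the obvious suspension, gives $\eps$-uniformity at scale $1/N$ up to time $C_3N$ within the cylinder for $\theta$ in a subset, of measure at least $(1-\eps/2)$ times a small arc about $\theta_0$. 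The complement of the cylinder is a translation surface with boundary; capping it off or doubling produces a closed translation surface of strictly smaller complexity, to which the induction hypothesis applies in the nearby direction, and the two estimates must then be combined across the common saddle connections.

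That combination is where the real work lies. One must show that for most $\theta$ near $\theta_0$ the length-$C_3N$ geodesic splits its time between the cylinder and its complement in the correct proportions, and more generally that it spreads across all of $R_1,\dots,R_k$ correctly. The transition pattern is governed by how the orbit of an explicit circle rotation --- the holonomy around the cylinder core --- falls near the finite set of endpoints of the boundary saddle connections; I would impose a Diophantine condition on $\theta$ relative to $\PPP$, excluding those directions for which this orbit clusters near those endpoints up to the combinatorial depth relevant to time $C_3N$, show by a direct count on the unfolded staircase of translated rectangles that this condition forces the correct splitting to within $1\pm\eps$, and check that the excluded set of $\theta$ has measure at most $(\eps/2)$ times the arc about $\theta_0$. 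Running this over enough cylinder directions $\theta_0$ to leave uncovered only a set of directions of measure $<\eps\cdot2\pi$ --- possible since the analysis near each $\theta_0$ covers a neighbourhood and, by Theorem~B, the eventual good set is of full measure --- intersecting all the good sets, and discarding the vertex-hitting directions, yields $\Gamma(\PPP;N;\eps)$ of measure at least $(1-\eps)2\pi$ with
\begin{displaymath}
\frac{(1-\eps)C_3}{N}<\vert\{0\le t\le C_3N:\LLL_\theta(t)\in Q\}\vert<\frac{(1+\eps)C_3}{N}
\end{displaymath}
for every square $Q$ of side $1/N$ on $\PPP$ and every $\theta\in\Gamma(\PPP;N;\eps)$, which is Theorem~\ref{thm1}.

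The hard part will be carrying out this last step with explicit constants and without invoking Teichm\"uller or Rauzy--Veech renormalization: one needs a set of directions of measure at least $(1-\eps)2\pi$ along which the length-$C_3N$ geodesic spreads correctly across every cylinder and, within the finely subdivided model, across every rectangle $R_i$, uniformly as $N\to\infty$. Controlling the accumulation of error over a time of order $N$ while the relevant combinatorial depth grows with $N$ is the crux of the whole argument; by contrast, the passage to the rectangular model and the invocation of~(iii) on each cylinder are comparatively routine.
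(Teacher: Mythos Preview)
Your inductive step is not well-founded. Cutting a cylinder out of $\PPP$ leaves a translation surface with boundary whose closure, whether by capping or doubling, is not in general a closed translation surface of strictly smaller complexity: capping by a Euclidean disk is impossible because the boundary is a broken geodesic through singularities, capping by a flat cone adds a new singularity, and doubling need not decrease $2g-2+n$. Even granting some completion, for $\theta\ne\theta_0$ the flow on the complement is the flow on a surface \emph{with boundary}, which exits into the cylinder and returns; it is not governed by geodesic flow on any closed surface to which your inductive hypothesis would apply, so the appeal to the hypothesis ``in the nearby direction'' has no content. Your covering of $[0,2\pi)$ by arcs about cylinder directions $\theta_0$ would further require an effective form of Masur's lower bound with control on cylinder moduli, and invoking Theorem~B to dispose of the uncovered directions is circular, since Theorem~B is precisely the non-quantitative statement you are trying to sharpen. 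You yourself flag the combination step as ``the hard part''; as it stands it is not a gap to be filled but the entire theorem restated.

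The paper's proof is completely different and uses neither induction on complexity nor cylinder decomposition nor the torus case~(iii). It works directly with the interval exchange $T_\theta$ on the edge set $\EEE$, uses only Masur's quadratic \emph{upper} bound on saddle connections (Lemma~\ref{lem2.1}) to define, for each $N$, an explicit bad set $\Omega(n;c_0)$ of directions of measure $O(1/c_0)$, and shows via a transportation argument (Lemmas~\ref{lem2.2}--\ref{lem2.3}) that for $\theta$ outside it the orbit $\XXX=\{T_\theta^jx_1:0\le j<M\}$ is $(A;M_1)$-anti-crowded (Lemma~\ref{lem4.1}). A hierarchical $z$-adic variance $S_h(\XXX;\tau;p)$ is then set up (Section~\ref{sec3}), and a pigeonhole over scales $h$ forces a dichotomy: either $(\delta;z)$-balance fails at many scales, whence the summed variances exceed the anti-crowdedness upper bound (Cases~1 and~2A, both impossible once $k$ is chosen large), or every edge already carries nearly uniform hitting density (Case~2B), and since the flow transports these edge densities into one another, global $\eps$-uniformity follows.
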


Super-fast spreading of billiard flow in a rational polygon can be defined in an analogous way.
It then follows as a straightforward consequence of Theorem~\ref{thm1} that billiard flow in an arbitrary rational polygon exhibits super-fast spreading.

%
%

\section{Geodesics on a translation surface}\label{sec2}

If we consider geodesic flow on the unit torus $[0,1)^2$, it is clear that rational slopes lead to periodic orbits.
Thus directions in $[0,2\pi)$ leading to rational slopes are the periodic directions.
Such directions can be characterized by line segments on the plane that join two lattice points, so the growth rate of the number of periodic orbits
is essentially a lattice point counting problem.

On a general translation surface, we have the additional problem of singularities, giving rise to saddle connections
that join vertices of the defining polygons of the translation surface.
As a polygon does not in general tile the plane, counting the number of periodic directions and saddle connections is much more complicated.
However, we have the following deep result of Masur~\cite{masur86,masur88,masur90}
on the quadratic growth rate of periodic directions and saddle connections.

For any translation surface $\PPP$ and any real number $T\ge0$, let $\NNN_1(\PPP;T)$ denote the set of directions $\phi$
such that there is a closed geodesic in direction $\phi$ on $\PPP$ with arc length not exceeding~$T$,
and let $\NNN_2(\PPP;T)$ denote the set of directions $\phi$
such that there is a saddle connection on $\PPP$ in direction $\phi$ and arc length not exceeding~$T$.

\begin{lem}\label{lem2.1}
For any translation surface $\PPP$ and any real number $T\ge0$, we have
\begin{equation}\label{eq2.1}
\vert\NNN_1(\PPP;T)\vert\le\vert\NNN_2(\PPP;T)\vert\le C^\star T^2,
\end{equation}
where the constant $C^\star=C^\star(\PPP)$ is independent of~$T$.
\end{lem}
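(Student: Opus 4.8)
The plan is to reduce the counting of periodic directions and saddle connections to a lattice-point-type estimate via a volume argument in the space of translation surfaces, following Masur's approach. First I would observe that the inequality $\vert\NNN_1(\PPP;T)\vert\le\vert\NNN_2(\PPP;T)\vert$ is essentially trivial: any closed geodesic in a given direction $\phi$ on $\PPP$ can be deformed parallel to itself until it hits a singularity (since $\PPP$ has finitely many vertices and positive total cone angle there), producing a saddle connection in the same direction $\phi$ whose length is at most that of the original closed geodesic. Hence every direction counted by $\NNN_1(\PPP;T)$ is also counted by $\NNN_2(\PPP;T)$, and it suffices to bound $\vert\NNN_2(\PPP;T)\vert$.

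For the main bound $\vert\NNN_2(\PPP;T)\vert\le C^\star T^2$, the key idea is to encode each saddle connection as a \emph{holonomy vector} in $\Rr^2$: a saddle connection $\gamma$ in direction $\phi$ of length $\ell$ contributes the vector $\bfv_\gamma=\ell(\cos\phi,\sin\phi)$, obtained by developing $\gamma$ into the plane. Distinct directions among the saddle connections of length $\le T$ give holonomy vectors lying in distinct rays of the disc of radius $T$. The crucial finiteness input is that, for a fixed translation surface $\PPP$, the set of holonomy vectors of saddle connections is a \emph{discrete} subset of $\Rr^2$ — this follows because a saddle connection is determined up to finitely many choices by its holonomy vector (the starting singularity, and the straight-line path, are constrained), and an accumulation of holonomy vectors would force an accumulation of saddle connections of bounded length, contradicting the fact that $\PPP$ is compact with finitely many singularities and no arbitrarily short saddle connections. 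Discreteness of a subset of $\Rr^2$ means it has at most $O(T^2)$ points in the disc of radius $T$; since each direction in $\NNN_2(\PPP;T)$ is represented by at least one such holonomy vector, we obtain $\vert\NNN_2(\PPP;T)\vert\le C^\star T^2$ with $C^\star=C^\star(\PPP)$ depending only on the geometry of $\PPP$ (e.g.\ through the shortest saddle connection length and the number of singularities).

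The main obstacle is making the constant $C^\star$ genuinely independent of $T$ while controlling it by intrinsic data of $\PPP$. A soft discreteness argument gives existence of \emph{some} bound but not obviously a clean quadratic one with explicit dependence; the honest route is to invoke Masur's theorem directly, which establishes exactly $\vert\NNN_2(\PPP;T)\vert=O(T^2)$ (indeed an asymptotic $cT^2$ for almost every $\PPP$) using the dynamics of the Teichm\"uller geodesic flow and the $\mathrm{SL}(2,\Rr)$-action on the moduli space of translation surfaces, together with ergodicity and a volume computation in the sense of Eskin--Masur. For the purposes of this paper only the upper bound $C^\star T^2$ is needed, so I would cite \cite{masur86,masur88,masur90} for the quadratic upper bound and record the elementary deformation argument above for the first inequality in \eqref{eq2.1}. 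If a self-contained proof of the upper bound were desired, one would run the following scheme: fix a small $\epsilon_0>0$ so that every saddle connection on $\PPP$ has length $\ge\epsilon_0$ (possible by compactness), note that two distinct saddle connections with holonomy vectors within distance $\epsilon_0/2$ and of length $\le T$ cannot have nearly-equal directions without nearly-equal lengths, then cover the disc of radius $T$ by $O((T/\epsilon_0)^2)$ cells of diameter $\epsilon_0/2$ and argue that each cell contains boundedly many holonomy vectors — the bound per cell depending on the number of singularities and a Euclidean packing estimate for the developing map. This yields the claimed $C^\star T^2$ with $C^\star$ controlled by $\epsilon_0^{-2}$ and the combinatorial complexity of $\PPP$.
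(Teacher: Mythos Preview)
The paper does not prove Lemma~\ref{lem2.1} at all: it is stated as a quotation of Masur's theorem, with the citations \cite{masur86,masur88,masur90} (and the remark that Vorobets~\cite{vorobets97} later made the constants effective). Your main proposal --- argue the first inequality by sliding a closed geodesic in its cylinder until the boundary is reached, and then cite Masur for the quadratic upper bound on $\vert\NNN_2(\PPP;T)\vert$ --- is exactly in line with this, and is correct as far as it goes.

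One genuine error to flag in your auxiliary sketch: the sentence ``Discreteness of a subset of $\Rr^2$ means it has at most $O(T^2)$ points in the disc of radius $T$'' is false. Discreteness only says each point is isolated; the isolation radius can shrink as one moves outward, so a discrete set can have far more than $T^2$ points in a disc of radius~$T$ (e.g.\ take $\{(n,k/n^2):1\le n,\ 0\le k\le n^2\}$, which is discrete but has $\sim T^3$ points in the disc of radius~$T$). So the soft discreteness argument does not by itself yield the quadratic bound, and your fallback cell-counting sketch has the same gap: you have not shown that each $\epsilon_0/2$-cell contains only boundedly many holonomy vectors \emph{uniformly in~$T$}. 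Masur's actual argument requires more than this --- it uses either the structure of the Delaunay triangulation / systole function or the dynamics of the $\mathrm{SL}(2,\Rr)$-action --- which is precisely why the paper is content to cite the result rather than reprove it. Your instinct to treat the citation as the ``honest route'' is the right one; just drop the claim that discreteness alone suffices.
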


As a consequence of lattice point counting in the special case of the unit torus, the quadratic upper bound in \eqref{eq2.1} is best possible.

We remark that Masur has also established a corresponding quadratic lower bound.
However, the proof of Masur is ineffective and establishes the existence of the constant factors only.
More recently, Vorobets~\cite{vorobets97} has obtained both bounds with effective constant factors.

For any translation surface $\PPP$ and any real number $T\ge0$, let $\NNN^*_1(\PPP;T)$ denote the set of directions $\phi$
such that there is a closed geodesic in direction $\phi$ on $\PPP$ with arc length greater than $T/2$ and not exceeding~$T$,
and let $\NNN^*_2(\PPP;T)$ denote the set of directions
$\phi$ such that there is a saddle connection on $\PPP$ in direction $\phi$ and arc length greater than $T/2$ and not exceeding~$T$.
Clearly
\begin{equation}\label{eq2.2}
\max\{\vert\NNN^*_1(\PPP;T)\vert,\vert\NNN^*_2(\PPP;T)\vert\}\le\vert\NNN_2(\PPP;T)\vert\le C^\star T^2.
\end{equation}

A translation surface $\PPP$ is a finite set
\begin{equation}\label{eq2.3}
\PPP=\{\PPP_1,\ldots,\PPP_\kappa\}
\end{equation}
of defining polygon faces on the plane, equipped with boundary identification of pairs of parallel edges of equal length,
leading to a compact, oriented and connected surface.

A half-infinite geodesic $\LLL_\theta(t)$, $t\ge0$, with direction $\theta$ on $\PPP$
can be viewed as an indexed collection of infinitely many parallel line segments inside the underlying bounded region of~$\PPP$.
Using repeated translations of the faces of~$\PPP$, we can visualize this half-infinite geodesic on $\PPP$
as a half-infinite straight line $L_\theta(t)$, $t\ge0$, on the plane with a rather straightforward procedure which we illustrate in Figure~1.

Suppose that the half-infinite geodesic $\LLL_\theta(t)$, $t\ge0$, on~$\PPP$ starts at a point $S$ inside a polygon face $\PPP_\ell$
of the collection \eqref{eq2.3} and has direction~$\theta$.
Let
\begin{displaymath}
P_0=P_0(\PPP_\ell;S)
\end{displaymath}
denote a copy of $\PPP_\ell$ on the plane, taking care to distinguish between the polygon face $\PPP_\ell$ of the translation surface $\PPP$
and the polygon $P_0(\PPP_\ell;S)$ on the plane.

\begin{displaymath}
\begin{array}{c}
\includegraphics{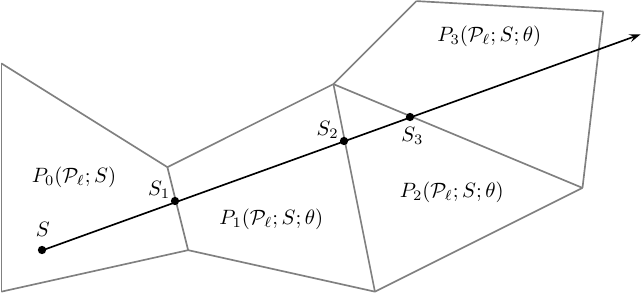}
\\
\mbox{Figure 1: converting a $1$-direction geodesic on a translation surface}
\\
\mbox{into a straight line on the plane}
\end{array}
\end{displaymath}

Let $t_1>0$ be minimal such that $\LLL_\theta(t_1)$ intersects a boundary edge of~$\PPP_\ell$, and let $S_1=\LLL_\theta(t_1)$.
This boundary edge of $\PPP_\ell$ is identified with a parallel edge of a polygon face $\PPP_{\ell_1}$ of the collection \eqref{eq2.3},
not necessarily distinct from~$\PPP_\ell$.
Let
\begin{displaymath}
P_1=P_1(\PPP_\ell;S;\theta)
\end{displaymath}
denote a copy of $\PPP_{\ell_1}$ on the plane, placed such that it is joined to $P_0=P_0(\PPP_\ell;S)$
along the two edges corresponding to the two identified edges of $\PPP_\ell$ and $\PPP_{\ell_1}$,
taking care to distinguish between the polygon face $\PPP_{\ell_1}$ of the translation surface $\PPP$
and the polygon $P_1=P_1(\PPP_\ell;S;\theta)$ on the plane.

Next, let $t_2>t_1$ be minimal such that $\LLL_\theta(t_2)$ intersects a boundary edge of~$\PPP_{\ell_1}$, and let $S_2=\LLL_\theta(t_2)$.
This boundary edge of $\PPP_{\ell_1}$ is identified with a parallel edge of a polygon face $\PPP_{\ell_2}$ of the collection \eqref{eq2.3},
not necessarily distinct from~$\PPP_{\ell_1}$.
Let
\begin{displaymath}
P_2=P_2(\PPP_\ell;S;\theta)
\end{displaymath}
denote a copy of $\PPP_{\ell_2}$ on the plane, placed such that it is joined to $P_1=P_1(\PPP_\ell;S;\theta)$ along the two edges
corresponding to the two identified edges of $\PPP_{\ell_1}$ and $\PPP_{\ell_2}$,
taking care to distinguish between the polygon face $\PPP_{\ell_2}$ of the translation surface
$\PPP$ and the polygon $P_2=P_2(\PPP_\ell;S;\theta)$ on the plane.

Next, let $t_3>t_2$ be minimal such that $\LLL_\theta(t_3)$ intersects a boundary edge of~$\PPP_{\ell_2}$, and let $S_3=\LLL_\theta(t_3)$.
This boundary edge of $\PPP_{\ell_2}$ is identified with a parallel edge of a polygon face $\PPP_{\ell_3}$ of the collection \eqref{eq2.3},
not necessarily distinct from~$\PPP_{\ell_2}$.
Let
\begin{displaymath}
P_3=P_3(\PPP_\ell;S;\theta)
\end{displaymath}
denote a copy of $\PPP_{\ell_3}$ on the plane, placed such that it is joined to $P_2=P_2(\PPP_\ell;S;\theta)$ along the two edges
corresponding to the two identified edges of $\PPP_{\ell_2}$ and $\PPP_{\ell_3}$,
taking care to distinguish between the polygon face $\PPP_{\ell_3}$ of the translation surface
$\PPP$ and the polygon $P_3=P_3(\PPP_\ell;S;\theta)$ on the plane.

And so on.
We thus have a sequence
\begin{displaymath}
P_0=P_0(\PPP_\ell;S),
\quad
P_i=P_i(\PPP_\ell;S;\theta),
\quad
i=1,2,3,
\end{displaymath}
of polygons on the plane, glued together along a half-infinite straight line $L_\theta(t)$, $t\ge0$, of direction $\theta$
and starting from the point $L_\theta(0)=S$.
We may call this the associated half-line of the geodesic $\LLL_\theta(t)$, $t\ge0$.
Furthermore, we call the line segments $S_{i-1}S_i$, $i=1,2,3,\ldots,$ the linear extensions of the half-infinite straight line $L_\theta(t)$, $t\ge0$,
with the convention that $S_0=S$.

Our plan is to define an interval exchange transformation to describe the effect of the geodesic flow on the edges
of the defining polygon faces in \eqref{eq2.3}.
However, before we do that, we need to first address a technical nuisance.
Suppose that $E_{i'}$ and $E_{i''}$ are two edges of the defining polygon faces of $\PPP$ in \eqref{eq2.3},
and that the image of a subinterval $J_{i'}\subset E_{i'}$ under geodesic flow in direction $\theta$ on the first edge $E_{i''}$
that the flow encounters is a subinterval $J_{i''}\subset E_{i''}$.
If the edges $E_{i'}$ and $E_{i''}$ happen to be parallel to each other, then the lengths of the subintervals $J_{i'}$ and $J_{i''}$ are equal.
However, this is not generally the case if $E_{i'}$ and $E_{i''}$ are not parallel to each other, as can be seen easily in Figure~2.
On the other hand, if we take a direction $H$ which is perpendicular to the flow, then the images of $J_{i'}$ and $J_{i''}$
under perpendicular projection on lines in this direction now have the same length.
In Figure~2, we have taken care to project the edges $E_{i'}$ and $E_{i''}$ on distinct lines $H_{i'}$ and~$H_{i''}$,
both of which are in the direction $H$ perpendicular to the flow.

\begin{displaymath}
\begin{array}{c}
\includegraphics{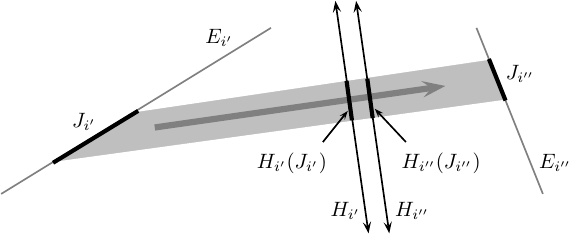}
\\
\mbox{Figure 2: a line perpendicular to the flow}
\end{array}
\end{displaymath}

Let $E_1,\ldots,E_b$ denote the edges of~$\PPP$, where $b=b(\PPP)$, and where each pair of identified edges is counted only once.
Let $H_1,\ldots,H_b$ denote distinct parallel lines in the direction $H$ perpendicular to the direction $\theta$ of the flow.

For each edge $E_i$, $i=1,\ldots,b$, let $H_i(E_i)$ denote the image of the perpendicular projection of $E_i$ on~$H_i$.
For convenience, we refer to this as the $H$-image of the edge~$E_i$, and may abuse notation by writing $H$ for~$H_i$.
Clearly the images $H_i(E_i)$, $i=1,\ldots,b$ are pairwise disjoint.

We shall avoid small neighborhoods of directions $\phi$ on $\PPP$ with saddle connections, and these include the directions of all the edges of~$\PPP$.
Thus we can ensure that
\begin{displaymath}
\vert H_i(E_i)\vert>0,
\quad
i=1,\ldots,b.
\end{displaymath}
The projection of the distinct edges of $\PPP$ to distinct lines in the direction $H$ now gives rise to mappings
\textcolor{white}{xxxxxxxxxxxxxxxxxxxxxxxxxxxxxx}
\begin{displaymath}
\psi_i:E_i\to H_i(E_i),
\quad
i=1,\ldots,b,
\end{displaymath}
which are bijective if we ignore the endpoints of the edges and their images.

Consider geodesic flow in a direction $\theta$ which is bounded away from the directions of the edges of~$\PPP$.
The union of the $H$-images is given by
\begin{displaymath}
H_0=H_0(\PPP;\theta)=H_1(E_1)\cup\ldots\cup H_b(E_b),
\end{displaymath}
and the sum of the lengths of the $H$-images is given by
\begin{displaymath}
c_1=c_1(\PPP;\theta)=\vert H_1(E_1)\vert+\ldots+\vert H_b(E_b)\vert.
\end{displaymath}
Thus we can assume that $H_0=[0,1)$, and identify the $H$-images
\begin{displaymath}
H_i(E_i),
\quad
i=1,\ldots,b,
\end{displaymath}
with $b$ pairwise disjoint subintervals
\begin{displaymath}
H'_i\subset[0,1),
\quad
\vert H'_i\vert=c_1^{-1}\vert H_i(E_i)\vert,
\quad
i=1,\ldots,b,
\end{displaymath}
if we ignore the vertices of $\PPP$ and their projection images.
Furthermore, if
\begin{displaymath}
\EEE=\EEE(\PPP)=E_1\cup\ldots\cup E_b
\end{displaymath}
denotes the union of the edges of~$\PPP$, then, somewhat abusing notation, we arrive eventually at a mapping
\textcolor{white}{xxxxxxxxxxxxxxxxxxxxxxxxxxxxxx}
\begin{displaymath}
\psi:\EEE\to[0,1),
\end{displaymath}
which is bijective if we ignore the vertices of $\PPP$ and their projection images.
It is clear that the restriction $\psi\vert_{E_i}=\psi_i$, $i=1,\ldots,b$.
Note that $\psi$ gives a one-to-one correspondence between the points on the edges of $\PPP$ and points on the unit interval $[0,1)$,
apart from the finitely many singularities arising from the vertices of~$\PPP$.

We comment here that the constant $c_1=c_1(\PPP;\theta)$ may differ from one direction $\theta$ to another.
However, there exist constants $c_2=c_2(\PPP)>0$ and $c_3=c_3(\PPP)>0$, depending only on~$\PPP$, such that
\begin{equation}\label{eq2.4}
c_2(\PPP)\le c_1(\PPP;\theta)\le c_3(\PPP),
\quad
\theta\in[0,2\pi).
\end{equation}
For instance, we can take $c_2=c_2(\PPP)$ to be the diameter of a circle lying within~$\PPP$,
and take $c_3=c_3(\PPP)$ to be the sum of the lengths of the edges of the defining polygons of~$\PPP$.

Let $c_0>1$ be an as yet unspecified constant.
For any real number $n$ and integer $m$ satisfying $1\le m\le n$, consider a \textit{bad} direction
\begin{displaymath}
\phi\in\NNN^*_1(\PPP;2^m)\cup\NNN^*_2(\PPP;2^m),
\end{displaymath}
and consider a short interval
\begin{equation}\label{eq2.5}
I_\phi(n;m;c_0)=\left[\phi-\frac{1}{c_02^{n+m}},\phi+\frac{1}{c_02^{n+m}}\right].
\end{equation}
Write
\textcolor{white}{xxxxxxxxxxxxxxxxxxxxxxxxxxxxxx}
\begin{equation}\label{eq2.6}
\Omega(n;c_0)=\bigcup_{1\le m\le n}\bigcup_{\phi\in\NNN^*_1(\PPP;2^m)\cup\NNN^*_2(\PPP;2^m)}I_\phi(n;m;c_0).
\end{equation}
Clearly it follows from \eqref{eq2.2} that
\begin{align}\label{eq2.7}
\lambda(\Omega(n;c_0))
&
\le\sum_{1\le m\le n}\sum_{\phi\in\NNN^*_1(\PPP;2^m)\cup\NNN^*_2(\PPP;2^m)}\vert I_\phi(n;m;c_0)\vert
\nonumber
\\
&
\le\sum_{1\le m\le n}2C^\star2^{2m}\frac{2}{c_02^{n+m}}
<\frac{8C^\star}{c_0}.
\end{align}

Suppose that $Q_0R_0$ is a subinterval of an edge $E_{i_0}$ of~$\PPP$.
The geodesic flow in the direction $\theta$ either moves $Q_0R_0$ to a subinterval $Q_1R_1$ of the first edge $E_{i_1}$ of $\PPP$
that the flow encounters, or there is splitting in the sense that the image is on two or more edges and includes a vertex of~$\PPP$.
Suppose that the former holds.
The geodesic flow in the direction $\theta$ then either moves $Q_1R_1$ to a subinterval $Q_2R_2$ of the first edge $E_{i_2}$ of $\PPP$
that the flow encounters, or there is splitting in the sense that the image is on two or more edges and includes a vertex of~$\PPP$.
Suppose again that the former holds.
The geodesic flow in the direction $\theta$ then either moves $Q_2R_2$ to a subinterval $Q_3R_3$ of the first edge $E_{i_3}$ of $\PPP$
that the flow encounters, or there is splitting in the sense that the image is on two or more edges and includes a vertex of~$\PPP$.
And so on.
Suppose now that the flow in the direction $\theta$ moves $Q_0R_0$ in this way free of splitting in $w$ consecutive forward extensions
to a subinterval $Q_wR_w$ of an edge $E_{i_w}$ of~$\PPP$, and that there is splitting in the next extension forward, hitting a vertex $V_1$ of~$\PPP$.
Figure~3 shows the analogous version of this on the plane.

\begin{displaymath}
\begin{array}{c}
\includegraphics{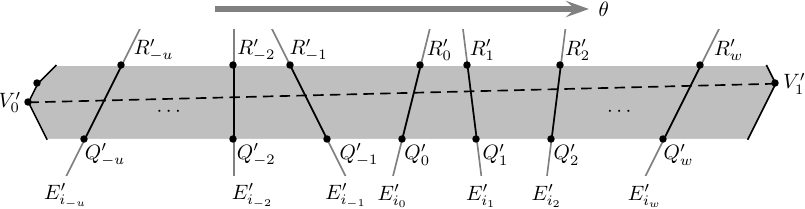}
\\
\mbox{Figure 3: transportation process: forward-backward extensions}
\end{array}
\end{displaymath}

On the other hand, the geodesic flow in the direction opposite to $\theta$ either moves $Q_0R_0$ to a subinterval $Q_{-1}R_{-1}$
of the first edge $E_{i_{-1}}$ of $\PPP$ that the flow encounters, or there is splitting in the sense that the image is on two or more edges
and includes a vertex of~$\PPP$.
Suppose that the former holds.
The geodesic flow in the direction opposite to $\theta$ then either moves $Q_{-1}R_{-1}$ to a subinterval $Q_{-2}R_{-2}$
of the first edge $E_{i_{-2}}$ of $\PPP$ that the flow encounters, or there is splitting in the sense
that the image is on two or more edges and includes a vertex of~$\PPP$.
And so on.
Suppose now that the flow in the direction opposite to $\theta$ moves $Q_0R_0$ in this way free of
splitting in $u$ consecutive backward extensions to a subinterval $Q_{-u}R_{-u}$ of an edge $E_{i_{-u}}$ of~$\PPP$,
and that there is splitting in the next extension backward, hitting a vertex $V_0$ of~$\PPP$.
Again Figure~3 shows the analogous version of this on the plane.

Clearly the finite geodesic segment $V_0V_1$ is a saddle connection of~$\PPP$.
It is easier to visualize this finite geodesic segment if we consider an infinite geodesic $\LLL_\theta(t)$, $t\in\Rr$,
on the surface $\PPP$ with $\LLL_\theta(0)=Q_0$.
This corresponds to a straight line $L_\theta(t)$, $t\in\Rr$, on the plane, and the saddle connection $V_0V_1$
corresponds to a straight line segment $V'_0V'_1$ on the plane, with length $\vert V'_0V'_1\vert$ satisfying
\begin{equation}\label{eq2.8}
c_4(u+w)>\vert V'_0V'_1\vert,
\end{equation}
where $c_4=c_4(\PPP)$ is an appropriate constant.
For instance, we can take $c_4(\PPP)$ to be twice the maximum of the diameters of the defining polygon faces of $\PPP$ in \eqref{eq2.3}.

\begin{lem}\label{lem2.2}
Suppose that $Q_0R_0$ is a subinterval of an edge $E_{i_0}$ of~$\PPP$, and consider the following transportation process.
The geodesic flow in the direction $\theta$ moves $Q_0R_0$ free of splitting in $w$ consecutive forward extensions
to a subinterval $Q_wR_w$ of an edge $E_{i_w}$ of~$\PPP$, and there is splitting in the next extension forward, hitting a vertex $V_1$ of~$\PPP$.
The geodesic flow in the direction opposite to $\theta$ moves $Q_0R_0$ free of splitting in $u$ consecutive backward extensions to a subinterval
$Q_{-u}R_{-u}$ of an edge $E_{i_{-u}}$ of~$\PPP$, and there is splitting in the next extension backward, hitting a vertex $V_0$ of~$\PPP$.

Suppose that $c_0\ge4$ is an as yet unspecified constant and $\theta\not\in\Omega(n_0;c_0)$, where the real number $n_0$ is fixed.
Suppose further that
\begin{equation}\label{eq2.9}
\frac{1}{c_0^22^{n_0+1}}\le\vert H(Q_0R_0)\vert<\frac{1}{c_0^22^{n_0}},
\end{equation}
where $\vert H(Q_0R_0)\vert=\vert H_{i_0}(Q_0R_0)\vert$ is the length of the $H$-image of the subinterval $Q_0R_0$.
Then
\textcolor{white}{xxxxxxxxxxxxxxxxxxxxxxxxxxxxxx}
\begin{displaymath}
u+w\ge c_52^{n_0},
\end{displaymath}
where $c_5=c_5(\PPP)>0$ is a constant.
\end{lem}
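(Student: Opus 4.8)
The plan is to deduce the lemma from \eqref{eq2.8}. The transportation process supplies a saddle connection $V_0V_1$ of $\PPP$ in direction $\theta$, of planar length $\ell:=|V_0'V_1'|$, so it suffices to prove $\ell>2^{n_0}$: indeed \eqref{eq2.8} then gives $c_4(u+w)>\ell>2^{n_0}$, and the lemma holds with $c_5=c_5(\PPP):=1/c_4$. The hypothesis $\theta\notin\Omega(n_0;c_0)$ will be used only through the following elementary remark: if $\phi$ is a direction of $\PPP$ carrying a closed geodesic or a saddle connection of arc length in $(2^{m-1},2^m]$ for some integer $m$ with $1\le m\le n_0$, then $\phi$ is the midpoint of one of the intervals $I_\phi(n_0;m;c_0)$ constituting $\Omega(n_0;c_0)$, whence
\[
|\theta-\phi|>\frac{1}{c_0\,2^{n_0+m}}.
\]
Taking $\phi=\theta$ shows that $\theta$ carries no closed geodesic or saddle connection of arc length in $(1,2^{n_0}]$; since $V_0V_1$ is a saddle connection in direction $\theta$, we conclude $\ell\le1$ or $\ell>2^{n_0}$. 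It therefore only remains to rule out $\ell\le1$, and for this the width hypothesis \eqref{eq2.9} is essential.

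To exclude $\ell\le1$ I would extract from the clean transportation tube a short orbit of $\PPP$ in a direction \emph{near} $\theta$. The beam $Q_0R_0$ has $H$-width $h:=|H(Q_0R_0)|$, and under the flow it sweeps out a flow-box of width $h$, free of vertices in its interior, running from a small neighbourhood of $V_0$ to a small neighbourhood of $V_1$ and of flow-length at most $|Q_0V_1'|+|Q_0V_0'|\le2\ell$. Straightening the broken geodesic that runs in direction $\theta$ from $V_0$ to the edge $E_{i_0}$, across $Q_0R_0$, and in direction $\theta$ on to $V_1$ — a path departing from direction $\theta$ only along the crossing of $Q_0R_0$, where its transverse displacement is at most $h$ — should yield a closed geodesic or saddle connection of $\PPP$ in a direction $\phi$ with
\[
|\theta-\phi|\le\frac{C_6\,h}{\ell},\qquad\text{arc length}\le C_7\,\ell,
\]
for constants $C_6,C_7$ depending only on $\PPP$ (if the flow-box fails to embed in $\PPP$ one gets such a closed geodesic already in direction $\theta$). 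Since $\ell\le1$, passing to an appropriate multiple when this orbit is a closed geodesic — or, when it is a saddle connection of arc length $\le1$, invoking the standing assumption that $\theta$ avoids a fixed neighbourhood of the finitely many directions of such short saddle connections — we may assume the orbit has arc length in $(1,\max\{2,C_7\}]$.

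To finish, let $m$ be the level of this orbit, so $2^{m-1}<\max\{2,C_7\}$ and hence $m$ is bounded by a constant depending only on $\PPP$; we may assume $n_0$ exceeds this constant (the finitely many small values of $n_0$ being absorbed by shrinking $c_5$), so that $1\le m\le n_0$. If $\phi=\theta$ this contradicts at once the conclusion of the first paragraph. Otherwise the gap estimate applies and gives
\[
\frac{C_6\,h}{\ell}\ge|\theta-\phi|>\frac{1}{c_0\,2^{n_0+m}}\ge\frac{1}{2\max\{2,C_7\}\,c_0\,2^{n_0}},
\]
so that, using the systole bound $\ell\ge\delta_0=\delta_0(\PPP)>0$, we obtain $h>(C_8\,c_0\,2^{n_0})^{-1}$ with $C_8:=2\max\{2,C_7\}C_6/\delta_0$. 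Choosing the still-unspecified constant $c_0$ to be at least $\max\{4,C_8\}$ — permissible since $C_8=C_8(\PPP)$ — this contradicts the upper bound $h<(c_0^2\,2^{n_0})^{-1}$ of \eqref{eq2.9}. Hence $\ell\le1$ is impossible, so $\ell>2^{n_0}$ and the lemma follows.

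The one substantive difficulty I foresee is the extraction in the second paragraph: one must show that a clean flow-box of width $h$ spanning the short saddle connection $V_0V_1$ genuinely closes up — after a tilt of size $O(h/\ell)$ — into a bona fide closed geodesic or saddle connection of $\PPP$ of arc length $O(\ell)$, with the cone points $V_0$ and $V_1$ reconnected (equivalently, that a thin clean tube along a short saddle connection forces a nearby short cylinder). This rests on the geometry of the unfolding and on a careful bookkeeping of the flow near the cone points; granting it, everything else is direct manipulation of the inequalities defining $\Omega(n_0;c_0)$ together with \eqref{eq2.9}, as above.
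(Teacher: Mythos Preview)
Your argument rests on a misreading of the geometry: the saddle connection $V_0V_1$ is \emph{not} in direction~$\theta$. The vertex $V_1$ is hit by the forward $\theta$-flow from some interior point of $Q_wR_w$, and $V_0$ by the backward $\theta$-flow from a (generically different) interior point of $Q_{-u}R_{-u}$; in the unfolded plane, $V_0'$ and $V_1'$ therefore sit at different transverse heights inside the width-$h$ strip swept by $Q_0R_0$. The straight segment $V_0'V_1'$ lies entirely in that strip (hence in the chain of unfolded polygons, so it really is a saddle connection of~$\PPP$), but its direction $\phi$ satisfies only
\[
|\sin(\phi-\theta)|\le\frac{h}{\ell},
\qquad\text{whence}\qquad
|\phi-\theta|\le\frac{2h}{\ell}.
\]
This angular estimate is exactly what you labour to produce in your second paragraph by ``straightening the broken geodesic'' and ``tilting'': the object you are trying to construct is $V_0V_1$ itself. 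No extraction, no closing-up of a tube, no bookkeeping near cone points is required.

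Once this is seen, the proof is the paper's and fits in a few lines. Let $m$ be the integer with $2^{m-1}<\ell\le2^m$. If $1\le m\le n_0$ then $\phi\in\NNN_2^*(\PPP;2^m)$, so $\theta\notin\Omega(n_0;c_0)$ gives $|\phi-\theta|>(c_02^{n_0+m})^{-1}$; on the other hand \eqref{eq2.9} and $\ell>2^{m-1}$ give $|\phi-\theta|\le 2h/\ell<4/(c_0^22^{n_0+m})$, contradicting $c_0\ge4$. Hence $m>n_0$, so $\ell>2^{n_0-1}$, and \eqref{eq2.8} yields $u+w\ge c_52^{n_0}$ with $c_5=1/(2c_4)$.

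Because your first paragraph's dichotomy ``$\ell\le1$ or $\ell>2^{n_0}$'' was derived from the false premise $\phi=\theta$, the entire apparatus of the second and third paragraphs is aimed at a case split that does not arise. The only residual edge case is $m\le0$ (i.e.\ $\ell\le1$), which the paper passes over; it is harmless, since there are only finitely many saddle-connection directions of length at most~$1$ and one may either enlarge $\Omega(n_0;c_0)$ by their $(c_02^{n_0+1})^{-1}$-neighbourhoods at negligible cost to \eqref{eq2.7}, or simply rescale~$\PPP$ so that its shortest saddle connection has length exceeding~$1$.
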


\begin{proof}
Let $m$ be the unique integer satisfying the inequalities
\begin{equation}\label{eq2.10}
2^{m-1}<\vert V'_0V'_1\vert\le2^m.
\end{equation}
It is clear from Figure~3 that the dashed line segment~$V'_0V'_1$, corresponding to the saddle connection $V_0V_1$ on~$\PPP$,
lies within a strip in direction $\theta$ on the plane with width $\vert H(Q_0R_0)\vert$,
and the direction $\phi=\phi(V'_0V'_1)$ of $V'_0V'_1$ satisfies
\begin{equation}\label{eq2.11}
\vert\sin(\phi-\theta)\vert\le\frac{\vert H(Q_0R_0)\vert}{\vert V'_0V'_1\vert},
\quad\mbox{so that}\quad
\vert\phi-\theta\vert\le\frac{2\vert H(Q_0R_0)\vert}{\vert V'_0V'_1\vert}.
\end{equation}
Combining \eqref{eq2.9}--\eqref{eq2.11}, we conclude that
\begin{equation}\label{eq2.12}
\vert\phi-\theta\vert<\frac{4}{c_0^22^{n_0+m}}.
\end{equation}
On the other hand, it follows from \eqref{eq2.10} that $\phi\in\NNN^*_2(\PPP;2^m)$.
Since $\theta\not\in\Omega(n_0;c_0)$, it follows from \eqref{eq2.5} and \eqref{eq2.6} that
\begin{equation}\label{eq2.13}
\vert\phi-\theta\vert>\frac{1}{c_02^{n_0+m}}.
\end{equation}
However, combining \eqref{eq2.12} and \eqref{eq2.13} leads to the inequality $c_0<4$, contradicting our assumption that $c_0\ge4$.
Note now that \eqref{eq2.5} and \eqref{eq2.6} are based on the assumption that $m\le n_0$, so we must have $m>n_0$.
Combining this fact with \eqref{eq2.8} and \eqref{eq2.10} now leads to the desired conclusion.
\end{proof}

The transportation process that we have described gives rise to subintervals $Q_jR_j$ on the edges $E_{i_j}$, where $-u\le j\le w$.
These intervals may overlap, meaning that there may exist $j_1,j_2$ satisfying $-u\le j_1<j_2\le w$
such that $E_{i_{j_1}}=E_{i_{j_2}}$ and the intervals $Q_{j_1}R_{j_1}$ and $Q_{j_2}R_{j_2}$ intersect.
The next lemma shows that this overlapping is limited in the quantitative sense that the union of the intervals $Q_jR_j$, $-u\le j\le w$,
is still \textit{large}.

\begin{lem}\label{lem2.3}
Suppose that $Q_0R_0$ is a subinterval of an edge $E_{i_0}$ of~$\PPP$, and consider the following transportation process.
The geodesic flow in the direction $\theta$ moves $Q_0R_0$ free of splitting in $w$ consecutive forward extensions
to a subinterval $Q_wR_w$ of an edge $E_{i_w}$ of~$\PPP$, and there is splitting in the next extension forward, hitting a vertex $V_1$ of~$\PPP$.
The geodesic flow in the direction opposite to $\theta$ moves $Q_0R_0$ free of splitting in $u$ consecutive backward extensions
to a subinterval $Q_{-u}R_{-u}$ of an edge $E_{i_{-u}}$ of~$\PPP$, and there is splitting in the next extension backward,
hitting a vertex $V_0$ of~$\PPP$.

Suppose that $c_0\ge4$ is an as yet unspecified constant and $\theta\not\in\Omega(n_0;c_0)$, where the real number $n_0$ is fixed.
Suppose further that the inequalities \eqref{eq2.9} hold.
Then there exists an integer constant $c_6=c_6(\PPP)$ such that $c_6<c_5$ and for any subset
\begin{displaymath}
\JJJ\subset\{j\in\Zz:-u\le j\le w\}
\end{displaymath}
of $c_62^{n_0}$ consecutive integers, the intervals $Q_jR_j$, $j\in\JJJ$, are pairwise disjoint.
\end{lem}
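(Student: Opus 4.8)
The plan is to reduce the disjointness assertion to Lemma~\ref{lem2.2} by the following observation: if two intervals $Q_{j_1}R_{j_1}$ and $Q_{j_2}R_{j_2}$ lying on a common edge $E_{i_{j_1}}=E_{i_{j_2}}$ intersect, then running the transportation process from $Q_{j_1}R_{j_1}$ forward by $j_2-j_1$ steps returns to a \emph{nearby} interval on the same edge, and this near-return forces the flow to realize a short saddle connection in a direction very close to $\theta$. Concretely, since the two intervals overlap, there is a point $X$ on $E_{i_{j_1}}$ that belongs to both; following $X$ forward under the flow for $j_2-j_1$ steps (which are all splitting-free, since they form a sub-run of the run from $-u$ to $w$) brings $X$ back to a point $X'$ on the same edge $E_{i_{j_1}}$, and $\vert H(XX')\vert<\vert H(Q_0R_0)\vert$ because the $H$-image of the whole family of intervals is contained in a single strip of width $\vert H(Q_0R_0)\vert$.

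The key step is to exploit this near-return. Consider the infinite geodesic through $X$ in direction $\theta$: on the plane it is a straight line, and the segment from $X$ to $X'$ (after $j_2-j_1$ edge-crossings) is, up to translation, a closed-up loop whose endpoints project to within $\vert H(Q_0R_0)\vert$ of each other in the $H$-direction. This gives a candidate saddle connection, or more precisely a closed geodesic, whose length on the plane is at most $c_4(j_2-j_1)$ by the same estimate \eqref{eq2.8}, and whose direction $\phi$ differs from $\theta$ by at most $O(\vert H(Q_0R_0)\vert/(\text{length}))$, exactly as in \eqref{eq2.11}. Now I run the same dichotomy as in the proof of Lemma~\ref{lem2.2}: let $m'$ be the integer with $2^{m'-1}<(\text{loop length})\le 2^{m'}$, so $\phi\in\NNN^*_1(\PPP;2^{m'})\cup\NNN^*_2(\PPP;2^{m'})$; if $m'\le n_0$ then $\theta\notin\Omega(n_0;c_0)$ forces $\vert\phi-\theta\vert>1/(c_02^{n_0+m'})$, which combined with the upper bound and \eqref{eq2.9} yields $c_0<4$, a contradiction; hence $m'>n_0$, and so by \eqref{eq2.8} the number of steps satisfies $j_2-j_1>c_4^{-1}2^{m'-1}>c_4^{-1}2^{n_0-1}$. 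Choosing $c_6$ to be any positive integer constant with $c_6\le\min\{c_4^{-1}/2,\ c_5\}$ (and strictly less than $c_5$ as required), we conclude that any two indices in a block of $c_62^{n_0}$ consecutive integers that give intervals on a common edge cannot have overlapping intervals; since there is no overlap \emph{within} a single edge, and distinct edges have disjoint $H$-images hence disjoint interiors, the intervals $Q_jR_j$ over such a block are pairwise disjoint.

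The main obstacle is handling the possibility that the near-return loop is \emph{trivial}, i.e.\ that $X'=X$ and the loop closes up exactly with zero length — or more subtly, that the loop, while short, is contractible on $\PPP$ and does not correspond to any genuine closed geodesic or saddle connection, so that one cannot assign to it a direction in $\NNN^*_1$ or $\NNN^*_2$. One must argue that a splitting-free run returning to the same edge at an overlapping point with $j_2>j_1$ genuinely produces a nontrivial homotopy class: the displacement vector $V'_0V'_1$ on the plane between the two lifts of the edge $E_{i_{j_1}}$ is nonzero (otherwise the two planar copies $P_{j_1}$ and $P_{j_2}$ coincide, forcing periodicity of the edge-crossing sequence with period dividing $j_2-j_1$, and then the flow in direction $\theta$ is itself periodic, so $\theta$ is a periodic direction — but periodic directions are exactly those carrying closed geodesics, and a closed geodesic of the relevant length range again lands $\theta$ in some $\NNN^*_1(\PPP;2^{m'})$, giving the same contradiction). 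Once nontriviality is secured, the length lower bound $\vert V'_0V'_1\vert\ge$ (something bounded below) is automatic from the discreteness of the set of saddle connection vectors, but in fact we do not even need a positive lower bound on the length — only the step-count lower bound $j_2-j_1>c_4^{-1}2^{n_0-1}$, which comes purely from $m'>n_0$ via \eqref{eq2.8}. So the real content is the case analysis establishing $m'>n_0$, which is a verbatim repetition of the argument in Lemma~\ref{lem2.2} with $Q_0R_0$ replaced by the overlapping pair.
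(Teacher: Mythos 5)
Your proposal is correct and uses essentially the same mechanism as the paper: a near-return to the same edge forces a short periodic geodesic (or saddle connection) in a direction $\phi$ close to $\theta$, the hypothesis $\theta\notin\Omega(n_0;c_0)$ excludes $m\le n_0$, and the length bound forces $\vert j_1-j_2\vert$ to be large. The paper picks the specific common point $Q_{j_2}$ (the right endpoint of $Q_{j_2}R_{j_2}$) and the displacement vector between its two planar lifts $Q'_{j_2}$ and $Q''_{j_2}$; you pick an arbitrary common point $X$ and close up the loop between its two lifts; these are the same argument. One small observation: your extra paragraph handling the possibility that the displacement vector vanishes is not actually needed, since the planar flow is a straight line that strictly advances in direction $\theta$, so the two planar copies of the edge $E_{i_{j_1}}$ at steps $j_1<j_2$ are distinct parallel segments and the displacement vector is automatically nonzero; but including that safety net does not introduce any error.
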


\begin{proof}
Suppose that there exist $j_1,j_2$ satisfying
\begin{displaymath}
-u\le j_1<j_2\le w,
\end{displaymath}
such that $E_{i_{j_1}}=E_{i_{j_2}}$ and the intervals $Q_{j_1}R_{j_1}$ and $Q_{j_2}R_{j_2}$ intersect.
Without loss of generality, assume that $Q_{j_2}\in Q_{j_1}R_{j_1}$, so that  $Q_{j_1}Q_{j_2}\subset Q_{j_1}R_{j_1}$.
Then it follows from \eqref{eq2.9} that
\textcolor{white}{xxxxxxxxxxxxxxxxxxxxxxxxxxxxxx}
\begin{equation}\label{eq2.14}
\vert H(Q_{j_1}Q_{j_2})\vert\le\vert H(Q_{j_1}R_{j_1})\vert=\vert H(Q_0R_0)\vert\le\frac{1}{c_0^22^{n_0}}.
\end{equation}
Furthermore, $Q_{j_2}\in Q_{j_1}R_{j_1}$ implies that there is another point $Q''_{j_2}\in Q'_{j_1}R'_{j_1}$
in the analogous version of the transportation process on the plane, as shown in Figure~4.

\begin{displaymath}
\begin{array}{c}
\includegraphics{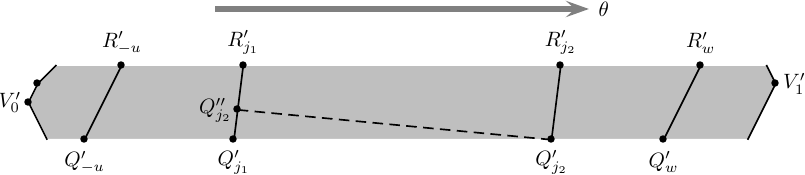}
\\
\mbox{Figure 4: overlapping edges}
\end{array}
\end{displaymath}

Let $m$ be the unique integer satisfying the inequalities
\begin{equation}\label{eq2.15}
2^{m-1}<\vert Q'_{j_2}Q''_{j_2}\vert\le2^m.
\end{equation}
It is clear from Figure~4 that the dashed line segment $Q'_{j_2}Q''_{j_2}$, corresponding to a periodic geodesic on $\PPP$
that goes through the point~$Q_{j_2}$, lies within a strip in direction $\theta$ on the plane with width $\vert H(Q_{j_1}Q_{j_2})\vert$,
and the direction $\phi=\phi(Q'_{j_2}Q''_{j_2})$ of $Q'_{j_2}Q''_{j_2}$ satisfies
\begin{equation}\label{eq2.16}
\vert\sin(\phi-\theta)\vert\le\frac{\vert H(Q_{j_1}Q_{j_2})\vert}{\vert Q'_{j_2}Q''_{j_2}\vert},
\quad\mbox{so that}\quad
\vert\phi-\theta\vert\le\frac{2\vert H(Q_{j_1}Q_{j_2})\vert}{\vert Q'_{j_2}Q''_{j_2}\vert}.
\end{equation}
Combining \eqref{eq2.14}--\eqref{eq2.16}, we conclude that
\begin{equation}\label{eq2.17}
\vert\phi-\theta\vert<\frac{4}{c_0^22^{n_0+m}}.
\end{equation}
On the other hand, it follows from \eqref{eq2.15} that $\phi\in\NNN^*_1(\PPP;2^m)$.
Since $\theta\not\in\Omega(n_0;c_0)$, it follows from \eqref{eq2.5} and \eqref{eq2.6} that
\begin{equation}\label{eq2.18}
\vert\phi-\theta\vert>\frac{1}{c_02^{n_0+m}}.
\end{equation}
However, combining \eqref{eq2.17} and \eqref{eq2.18} leads to the inequality $c_0<4$, contradicting our assumption that $c_0\ge4$.
Note now that \eqref{eq2.5} and \eqref{eq2.6} are based on the assumption that $m\le n_0$, so we must have $m>n_0$.
Combining this fact with \eqref{eq2.15} now leads to
\textcolor{white}{xxxxxxxxxxxxxxxxxxxxxxxxxxxxxx}
\begin{equation}\label{eq2.19}
\vert Q'_{j_2}Q''_{j_2}\vert>2^{n_0-1}.
\end{equation}
Clearly
\textcolor{white}{xxxxxxxxxxxxxxxxxxxxxxxxxxxxxx}
\begin{equation}\label{eq2.20}
\vert Q'_{j_2}Q''_{j_2}\vert\le c_7(\PPP)\vert j_1-j_2\vert,
\end{equation}
where $c_7(\PPP)$ is an upper bound on the diameters of the defining polygons of~$\PPP$.
Combining \eqref{eq2.19} and \eqref{eq2.20}, we conclude that $\vert j_1-j_2\vert>c_62^{n_0}$ for some constant $c_6=c_6(\PPP)$,
and leads to the desired conclusion.
\end{proof}

%
%

\section{Partitions, balance and anti-crowdedness}\label{sec3}

Let $M\ge1$ be an integer, and consider a set
\begin{displaymath}
\XXX=\{x_1,\ldots,x_M\}\subset[0,1).
\end{displaymath}
Suppose that $\delta\in(0,1)$ is arbitrarily small and fixed, and that $z\ge2$ is an integer.
For every integer $s\in\{1,\ldots,z\}$, consider the subinterval
\begin{displaymath}
I(s)=\left[\frac{s-1}{z},\frac{s}{z}\right)\subset[0,1).
\end{displaymath}
Note that $I(1)\cup\ldots\cup I(z)=[0,1)$ is a pairwise disjoint union.
We say that the set $\XXX$ is $(\delta;z)$-balanced if
\begin{displaymath}
-\frac{\delta M}{z}<\vert I(s)\cap\XXX\vert-\frac{M}{z}<\frac{\delta M}{z},
\quad
s=1,\ldots,z.
\end{displaymath}
Here the term $M/z=\vert[0,1)\cap\XXX\vert/z$ is the expectation of $\vert I(s)\cap\XXX\vert$.

Suppose that the integer $s_1\in\{1,\ldots,z\}$ is fixed.
For every integer $s\in\{1,\ldots,z\}$, consider the subinterval
\begin{displaymath}
I(s_1,s)=\frac{s_1-1}{z}+\left[\frac{s-1}{z^2},\frac{s}{z^2}\right)\subset I(s_1).
\end{displaymath}
Note that $I(s_1,1)\cup\ldots\cup I(s_1,z)=I(s_1)$ is a pairwise disjoint union.
We say that the set $\XXX$ is $(\delta;z)$-balanced relative to $I(s_1)$ if
\begin{displaymath}
-\frac{\delta M}{z^2}<\vert I(s_1,s)\cap\XXX\vert-\frac{\vert I(s_1)\cap\XXX\vert}{z}<\frac{\delta M}{z^2},
\quad
s=1,\ldots,z.
\end{displaymath}
Here the term $\vert I(s_1)\cap\XXX\vert/z$ is the expectation of $\vert I(s_1,s)\cap\XXX\vert$ relative to $\vert I(s_1)\cap\XXX\vert$.

Suppose next that $h\ge1$, and that the integers $s_1,\ldots,s_h\in\{1,\ldots,z\}$ are fixed.
Consider the interval
\begin{displaymath}
I(s_1,\ldots,s_h)=\frac{s_1-1}{z}+\ldots+\frac{s_{h-1}-1}{z^{h-1}}+\left[\frac{s_h-1}{z^h},\frac{s_h}{z^h}\right).
\end{displaymath}
For every integer $s\in\{1,\ldots,z\}$, consider the subinterval
\begin{displaymath}
I(s_1,\ldots,s_h,s)
=\frac{s_1-1}{z}+\ldots+\frac{s_h-1}{z^h}+\left[\frac{s-1}{z^{h+1}},\frac{s}{z^{h+1}}\right)
\subset I(s_1,\ldots,s_h).
\end{displaymath}
Note that $I(s_1,\ldots,s_h,1)\cup\ldots\cup I(s_1,\ldots,s_h,z)=I(s_1,\ldots,s_h)$ is a pairwise disjoint union.
We say that the set $\XXX$ is $(\delta;z)$-balanced relative to $I(s_1,\ldots,s_h)$ if
\begin{displaymath}
-\frac{\delta M}{z^{h+1}}<\vert I(s_1,\ldots,s_h,s)\cap\XXX\vert-\frac{\vert I(s_1,\ldots,s_h)\cap\XXX\vert}{z}<\frac{\delta M}{z^{h+1}},
\quad
s=1,\ldots,z.
\end{displaymath}
Here the term $\vert I(s_1,\ldots,s_h)\cap\XXX\vert/z$ is the expectation of $\vert I(s_1,\ldots,s_h,s)\cap\XXX\vert$ relative to
$\vert I(s_1,\ldots,s_h)\cap\XXX\vert$.

Suppose that the integer $p\ge1$ satisfies $z^p\le M_1$.
Write
\begin{align}
S_0(\XXX;p)
&
=\sum_{s_1=1}^z\ldots\sum_{s_p=1}^z
\left\vert\vert I(s_1,\ldots,s_p)\cap\XXX\vert-\frac{\vert[0,1)\cap\XXX\vert}{z^p}\right\vert^2,
\label{eq3.1}
\\
S_1(\XXX;p)
&
=\sum_{s_1=1}^z\ldots\sum_{s_p=1}^z
\left\vert\vert I(s_1,\ldots,s_p)\cap\XXX\vert-\frac{\vert I(s_1)\cap\XXX\vert}{z^{p-1}}\right\vert^2,
\label{eq3.2}
\\
S_2(\XXX;p)
&
=\sum_{s_1=1}^z\ldots\sum_{s_p=1}^z
\left\vert\vert I(s_1,\ldots,s_p)\cap\XXX\vert-\frac{\vert I(s_1,s_2)\cap\XXX\vert}{z^{p-2}}\right\vert^2,
\label{eq3.3}
\end{align}
and, in general, for any non-negative integer $h\le p$, write
\begin{equation}\label{eq3.4}
S_h(\XXX;p)
=\sum_{s_1=1}^z\ldots\sum_{s_p=1}^z
\left\vert\vert I(s_1,\ldots,s_p)\cap\XXX\vert-\frac{\vert I(s_1,\ldots,s_h)\cap\XXX\vert}{z^{p-h}}\right\vert^2.
\end{equation}

We state two very simple technical lemmas without proof.

\begin{lem}\label{lem3.1}
Suppose that $\VVV$ is a finite set, and that
\begin{displaymath}
A=\sum_{\bfv\in\VVV}a(\bfv),
\end{displaymath}
where $a$ is a real valued function on~$\VVV$.
Then
\begin{displaymath}
\sum_{\bfv\in\VVV}\left\vert a(\bfv)-\frac{A}{\vert\VVV\vert}\right\vert^2\le\sum_{\bfv\in\VVV}\vert a(\bfv)\vert^2.
\end{displaymath}
\end{lem}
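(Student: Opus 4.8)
The plan is to reduce the inequality to the elementary fact that subtracting the mean minimizes a sum of squares. Write $\bar a = A/|\VVV|$, the average value of $a$ over $\VVV$. First I would expand the left-hand side as
\begin{displaymath}
\sum_{\bfv\in\VVV}|a(\bfv)-\bar a|^2
=\sum_{\bfv\in\VVV}|a(\bfv)|^2-2\bar a\sum_{\bfv\in\VVV}a(\bfv)+|\VVV|\,\bar a^2.
\end{displaymath}
Since $\sum_{\bfv\in\VVV}a(\bfv)=A=|\VVV|\,\bar a$, the last two terms combine to $-2|\VVV|\bar a^2+|\VVV|\bar a^2=-|\VVV|\bar a^2\le0$. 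Hence the left-hand side equals $\sum_{\bfv\in\VVV}|a(\bfv)|^2-|\VVV|\bar a^2$, which is at most $\sum_{\bfv\in\VVV}|a(\bfv)|^2$, as required.

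The only care needed is over whether the function $a$ is real-valued or complex-valued, since the expansion of $|a(\bfv)-\bar a|^2$ differs slightly in the two cases; the statement as given says ``real valued'', so the computation above with ordinary squares suffices, and no conjugates are needed. If one wished to state the lemma for complex $a$ as well, the identical argument goes through upon writing $|a(\bfv)-\bar a|^2=|a(\bfv)|^2-2\,\frakRe(\bar a\,\overline{a(\bfv)}\,)+|\bar a|^2$ and again using $\sum_{\bfv\in\VVV}a(\bfv)=A$ to see that the cross term and the constant term together contribute $-|\VVV|\,|\bar a|^2\le0$.

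There is essentially no obstacle here: this is the standard ``variance is nonnegative'' identity, and the lemma is stated without proof in the paper precisely because it is routine. The one-line takeaway is that replacing a quantity by its deviation from the average can only decrease the sum of squared magnitudes, because the averaged part is orthogonal to the fluctuation.
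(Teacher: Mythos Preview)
Your proof is correct; it is the standard variance identity showing that $\sum_{\bfv}|a(\bfv)-\bar a|^2 = \sum_{\bfv}|a(\bfv)|^2 - |\VVV|\,\bar a^2 \le \sum_{\bfv}|a(\bfv)|^2$. The paper explicitly states Lemma~\ref{lem3.1} without proof (``We state two very simple technical lemmas without proof''), so there is nothing to compare against --- your argument is exactly the routine verification the authors had in mind.
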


\begin{lem}\label{lem3.2}
Suppose that $\VVV$ and $\WWW$ are finite sets, and that
\begin{displaymath}
B=\sum_{\bfv\in\VVV}B(\bfv)
\quad\mbox{and}\quad
B(\bfv)=\sum_{\bfw\in\WWW}b(\bfv,\bfw),
\end{displaymath}
where $b$ is a real valued function on $\VVV\times\WWW$.
Suppose further that
\begin{displaymath}
\TTT_0
=\sum_{\bfv\in\VVV}\sum_{\bfw\in\WWW}
\left\vert b(\bfv,\bfw)-\frac{B}{\vert\VVV\vert\vert\WWW\vert}\right\vert^2
\end{displaymath}
and
\textcolor{white}{xxxxxxxxxxxxxxxxxxxxxxxxxxxxxx}
\begin{displaymath}
\TTT_1
=\sum_{\bfv\in\VVV}\sum_{\bfw\in\WWW}
\left\vert b(\bfv,\bfw)-\frac{B(\bfv)}{\vert\WWW\vert}\right\vert^2.
\end{displaymath}
Then
\textcolor{white}{xxxxxxxxxxxxxxxxxxxxxxxxxxxxxx}
\begin{displaymath}
\TTT_0-\TTT_1
=\frac{1}{\vert\WWW\vert}\sum_{\bfv\in\VVV}
\left\vert B(\bfv)-\frac{B}{\vert\VVV\vert}\right\vert^2.
\end{displaymath}
\end{lem}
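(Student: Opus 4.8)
The plan is to prove this by the elementary orthogonal decomposition of squared deviations (the ``bias--variance'' identity), carried out first in the inner sum over $\bfw\in\WWW$ with $\bfv$ held fixed, and then summed over $\bfv\in\VVV$.

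First I would fix $\bfv\in\VVV$ and write, for each $\bfw\in\WWW$,
\begin{displaymath}
b(\bfv,\bfw)-\frac{B}{\vert\VVV\vert\vert\WWW\vert}
=\left(b(\bfv,\bfw)-\frac{B(\bfv)}{\vert\WWW\vert}\right)
+\left(\frac{B(\bfv)}{\vert\WWW\vert}-\frac{B}{\vert\VVV\vert\vert\WWW\vert}\right),
\end{displaymath}
where the second summand is constant in~$\bfw$. Squaring and summing over $\bfw\in\WWW$, the cross term vanishes, since
$\sum_{\bfw\in\WWW}(b(\bfv,\bfw)-B(\bfv)/\vert\WWW\vert)=B(\bfv)-B(\bfv)=0$
by the hypothesis $B(\bfv)=\sum_{\bfw\in\WWW}b(\bfv,\bfw)$, and we are left with
\begin{displaymath}
\sum_{\bfw\in\WWW}\left\vert b(\bfv,\bfw)-\frac{B}{\vert\VVV\vert\vert\WWW\vert}\right\vert^2
=\sum_{\bfw\in\WWW}\left\vert b(\bfv,\bfw)-\frac{B(\bfv)}{\vert\WWW\vert}\right\vert^2
+\vert\WWW\vert\left\vert\frac{B(\bfv)}{\vert\WWW\vert}-\frac{B}{\vert\VVV\vert\vert\WWW\vert}\right\vert^2.
\end{displaymath}

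Then I would simplify the final term to $\frac{1}{\vert\WWW\vert}\left\vert B(\bfv)-\frac{B}{\vert\VVV\vert}\right\vert^2$ and sum the displayed identity over $\bfv\in\VVV$. Since the left-hand side summed over $\bfv$ is $\TTT_0$ and the first term on the right summed over $\bfv$ is $\TTT_1$, this is exactly the asserted formula for $\TTT_0-\TTT_1$. There is essentially no obstacle: the one point needing care is the vanishing of the cross term, and that is precisely where the marginal hypothesis $B(\bfv)=\sum_{\bfw\in\WWW}b(\bfv,\bfw)$ is used, guaranteeing that the residuals $b(\bfv,\bfw)-B(\bfv)/\vert\WWW\vert$ have mean zero over~$\bfw$. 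Lemma~\ref{lem3.1} is the one-set instance of the same orthogonality principle, and together these two lemmas will supply the averaging relations among the quantities $S_h(\XXX;p)$ of \eqref{eq3.1}--\eqref{eq3.4}.
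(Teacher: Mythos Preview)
Your proof is correct. The paper itself does not supply a proof of this lemma; it explicitly states the two technical lemmas (Lemma~\ref{lem3.1} and Lemma~\ref{lem3.2}) without proof, treating them as elementary. Your orthogonal decomposition argument is precisely the standard one-line justification the authors have in mind, and your remark that the vanishing of the cross term is exactly where the hypothesis $B(\bfv)=\sum_{\bfw\in\WWW}b(\bfv,\bfw)$ enters is accurate.
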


Applying Lemma~\ref{lem3.1}, we have
\begin{equation}\label{eq3.5}
S_0(\XXX;p)\le\sum_{s_1=1}^z\ldots\sum_{s_p=1}^z\vert I(s_1,\ldots,s_p)\cap\XXX\vert^2.
\end{equation}
Next, note that
\textcolor{white}{xxxxxxxxxxxxxxxxxxxxxxxxxxxxxx}
\begin{displaymath}
M=\vert[0,1)\cap\XXX\vert=\sum_{s_1=1}^z\vert I(s_1)\cap\XXX\vert.
\end{displaymath}
Applying Lemma~\ref{lem3.2} with
\begin{displaymath}
\bfv=s_1,
\quad
\bfw=(s_2,\ldots,s_p),
\quad
\VVV=\{1,\ldots,z\},
\quad
\WWW=\{1,\ldots,z\}^{p-1}
\end{displaymath}
on the sums $S_0(\XXX;p)$ and $S_1(\XXX;p)$, we deduce that
\begin{equation}\label{eq3.6}
S_0(\XXX;p)-S_1(\XXX;p)
=\frac{1}{z^{p-1}}\sum_{s_1=1}^z
\left\vert\vert I(s_1)\cap\XXX\vert-\frac{\vert[0,1)\cap\XXX\vert}{z}\right\vert^2
\ge0.
\end{equation}
Next, we can write
\begin{equation}\label{eq3.7}
S_1(\XXX;p)=\sum_{s_1=1}^zS_1(\XXX;p;s_1)
\quad\mbox{and}\quad
S_2(\XXX;p)=\sum_{s_1=1}^zS_2(\XXX;p;s_1),
\end{equation}
where
\textcolor{white}{xxxxxxxxxxxxxxxxxxxxxxxxxxxxxx}
\begin{displaymath}
S_1(\XXX;p;s_1)
=\sum_{s_2=1}^z\ldots\sum_{s_p=1}^z
\left\vert\vert I(s_1,\ldots,s_p)\cap\XXX\vert-\frac{\vert I(s_1)\cap\XXX\vert}{z^{p-1}}\right\vert^2
\end{displaymath}
and
\textcolor{white}{xxxxxxxxxxxxxxxxxxxxxxxxxxxxxx}
\begin{displaymath}
S_2(\XXX;p;s_1)
=\sum_{s_2=1}^z\ldots\sum_{s_p=1}^z
\left\vert\vert I(s_1,\ldots,s_p)\cap\XXX\vert-\frac{\vert I(s_1,s_2)\cap\XXX\vert}{z^{p-2}}\right\vert^2.
\end{displaymath}
Applying Lemma~\ref{lem3.2} with
\begin{displaymath}
\bfv=s_2,
\quad
\bfw=(s_3,\ldots,s_p),
\quad
\VVV=\{1,\ldots,z\},
\quad
\WWW=\{1,\ldots,z\}^{p-2}
\end{displaymath}
on the sums $S_1(\XXX;p;s_1)$ and $S_2(\XXX;p;s_1)$, we deduce that
\begin{displaymath}
S_1(\XXX;p;s_1)-S_2(\XXX;p;s_1)=\frac{1}{z^{p-2}}\sum_{s_2=1}^z
\left\vert\vert I(s_1,s_2)\cap\XXX\vert-\frac{\vert I(s_1)\cap\XXX\vert}{z}\right\vert^2.
\end{displaymath}
Combining this with \eqref{eq3.7}, we deduce that
\begin{equation}\label{eq3.8}
S_1(\XXX;p)-S_2(\XXX;p)=\frac{1}{z^{p-2}}\sum_{s_1=1}^z\sum_{s_2=1}^z
\left\vert\vert I(s_1,s_2)\cap\XXX\vert-\frac{\vert I(s_1)\cap\XXX\vert}{z}\right\vert^2
\ge0.
\end{equation}
In general, for any non-negative integer $h<p$, we can write
\begin{equation}\label{eq3.9}
S_h(\XXX;p)=\sum_{s_1=1}^z\ldots\sum_{s_h=1}^zS_h(\XXX;p;s_1,\ldots,s_h)
\end{equation}
and
\textcolor{white}{xxxxxxxxxxxxxxxxxxxxxxxxxxxxxx}
\begin{equation}\label{eq3.10}
S_{h+1}(\XXX;p)=\sum_{s_1=1}^z\ldots\sum_{s_h=1}^zS_{h+1}(\XXX;p;s_1,\ldots,s_h),
\end{equation}
where
\begin{displaymath}
S_h(\XXX;p;s_1,\ldots,s_h)
=\sum_{s_{h+1}=1}^z\ldots\sum_{s_p=1}^z
\left\vert\vert I(s_1,\ldots,s_p)\cap\XXX\vert-\frac{\vert I(s_1,\ldots,s_h)\cap\XXX\vert}{z^{p-h}}\right\vert^2
\end{displaymath}
and
\begin{displaymath}
S_{h+1}(\XXX;p;s_1,\ldots,s_h)
=\sum_{s_{h+1}=1}^z\ldots\sum_{s_p=1}^z
\left\vert\vert I(s_1,\ldots,s_p)\cap\XXX\vert-\frac{\vert I(s_1,\ldots,s_{h+1})\cap\XXX\vert}{z^{p-h-1}}\right\vert^2.
\end{displaymath}
Applying Lemma~\ref{lem3.2} with
\begin{displaymath}
\bfv=s_{h+1},
\quad
\bfw=(s_{h+2},\ldots,s_p),
\quad
\VVV=\{1,\ldots,z\},
\quad
\WWW=\{1,\ldots,z\}^{p-h-1}
\end{displaymath}
on the sums $S_h(\XXX;p;s_1,\ldots,s_h)$ and $S_{h+1}(\XXX;p;s_1,\ldots,s_h)$, we deduce that
\begin{align}
&
S_h(\XXX;p;s_1,\ldots,s_h)-S_{h+1}(\XXX;p;s_1,\ldots,s_h)
\nonumber
\\
&\quad
=\frac{1}{z^{p-h-1}}\sum_{s_{h+1}=1}^z
\left\vert\vert I(s_1,\ldots,s_{h+1})\cap\XXX\vert-\frac{\vert I(s_1,\ldots,s_h)\cap\XXX\vert}{z}\right\vert^2.
\nonumber
\end{align}
Combining this with \eqref{eq3.9} and \eqref{eq3.10}, we deduce that
\begin{align}\label{eq3.11}
&
S_h(\XXX;p)-S_{h+1}(\XXX;p)
\nonumber
\\
&\quad
=\frac{1}{z^{p-h-1}}\sum_{s_1=1}^z\ldots\sum_{s_{h+1}=1}^z
\left\vert\vert I(s_1,\ldots,s_{h+1})\cap\XXX\vert-\frac{\vert I(s_1,\ldots,s_h)\cap\XXX\vert}{z}\right\vert^2
\nonumber
\\
&\quad
\ge0.
\end{align}

Consider two real numbers $A\ge2$ and $M_1$ satisfying $1<M_1\le M$.
We say that the set $\XXX$ is \textit{$(A;M_1)$-anti-crowded} if for every subinterval $I\subset[0,1)$
satisfying the restriction $\vert I\vert\ge M_1^{-1}$, the number of elements of $\XXX$ in $I$ satisfies
\begin{displaymath}
\vert I\cap\XXX\vert\le AM\vert I\vert.
\end{displaymath}

Note that $\vert I(s_1,\ldots,s_p)\vert=z^{-p}\ge M_1^{-1}$.
If $\XXX$ is $(A;M_1)$-anti-crowded, then
\begin{displaymath}
\vert I(s_1,\ldots,s_p)\cap\XXX\vert\le\frac{AM}{z^p},
\end{displaymath}
and it follows from \eqref{eq3.5} that
\textcolor{white}{xxxxxxxxxxxxxxxxxxxxxxxxxxxxxx}
\begin{equation}\label{eq3.12}
S_0(\XXX;p)\le\frac{A^2M^2}{z^p}.
\end{equation}
Note that
\begin{equation}\label{eq3.13}
S_0(\XXX;p)
=S_0(\XXX;p)-S_p(\XXX;p)
=\sum_{h=0}^{p-1}(S_h(\XXX;p)-S_{h+1}(\XXX;p)),
\end{equation}
a sum of non-negative terms, in view of \eqref{eq3.6}, \eqref{eq3.8} and \eqref{eq3.11}.

For any set $\XXX\subset[0,1)$ of $M$ elements and any real number~$\tau$, let $\XXX+\tau$
denote the translated copy of $\XXX$ modulo the torus $[0,1)$.
Corresponding to \eqref{eq3.1}--\eqref{eq3.4}, for any non-negative integer $h\le p$, write
\begin{align}\label{eq3.14}
&
S_h(\XXX;\tau;p)
\nonumber
\\
&\quad
=\sum_{s_1=1}^z\ldots\sum_{s_p=1}^z
\left\vert\vert I(s_1,\ldots,s_p)\cap(\XXX+\tau)\vert-\frac{\vert I(s_1,\ldots,s_h)\cap(\XXX+\tau)\vert}{z^{p-h}}\right\vert^2
\nonumber
\\
&\quad
=\sum_{s_1=1}^z\ldots\sum_{s_p=1}^z
\left\vert\vert(I(s_1,\ldots,s_p)-\tau)\cap\XXX\vert-\frac{\vert(I(s_1,\ldots,s_h)-\tau)\cap\XXX\vert}{z^{p-h}}\right\vert^2.
\end{align}

Corresponding to \eqref{eq3.12} and \eqref{eq3.13}, we have the following integral version.

\begin{lem}\label{lem3.3}
Suppose that $\XXX\subset[0,1)$ is a set of $M$ elements.
Then
\begin{equation}\label{eq3.15}
\int_0^1S_0(\XXX;\tau;p)\,\dd\tau=z^h\int_0^{z^{-h}}S_0(\XXX;\tau;p)\,\dd\tau,
\quad
h=0,1,\ldots,p,
\end{equation}
and
\textcolor{white}{xxxxxxxxxxxxxxxxxxxxxxxxxxxxxx}
\begin{align}\label{eq3.16}
\int_0^1S_0(\XXX;\tau;p)\,\dd\tau
&
=\int_0^1(S_0(\XXX;\tau;p)-S_p(\XXX;\tau;p))\,\dd\tau
\nonumber
\\
&
=\sum_{h=0}^{p-1}z^h\int_0^{z^{-h}}(S_h(\XXX;\tau;p)-S_{h+1}(\XXX;\tau;p))\,\dd\tau,
\end{align}
where
\textcolor{white}{xxxxxxxxxxxxxxxxxxxxxxxxxxxxxx}
\begin{equation}\label{eq3.17}
\int_0^{z^{-h}}(S_h(\XXX;\tau;p)-S_{h+1}(\XXX;\tau;p))\,\dd\tau\ge0,
\quad
h=0,1,\ldots,p-1.
\end{equation}
Furthermore, if $\XXX$ is $(A;M_1)$-anti-crowded, where $A\ge2$ and $1<M_1\le M$, then
\begin{equation}\label{eq3.18}
\int_0^1S_0(\XXX;\tau;p)\,\dd\tau\le\frac{A^2M^2}{z^p}.
\end{equation}
\end{lem}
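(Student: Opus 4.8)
The plan is to verify the four assertions in turn, noting that each is an elementary consequence of the machinery already assembled in Section~\ref{sec3}, transported from the combinatorial (fixed $\XXX$) setting to the integral (averaged over translates) setting.

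First I would prove the scaling identity \eqref{eq3.15}. The key observation is that the interval $I(s_1,\ldots,s_p)$ depends on $\tau$ only through $I(s_1,\ldots,s_p)-\tau$, so $S_0(\XXX;\tau;p)$, regarded as a function of $\tau$, is invariant under $\tau\mapsto\tau+z^{-1}$: indeed, shifting $\tau$ by $z^{-h}$ permutes cyclically the digits $s_1,\ldots,s_h$ in the $z$-adic expansion of the left endpoints while leaving the collection of intervals $\{I(s_1,\ldots,s_p)-\tau\}$ unchanged as a set, and likewise the collection $\{I(s_1,\ldots,s_h)-\tau\}$. Since $S_0(\XXX;\tau;p)$ is a sum over \emph{all} tuples $(s_1,\ldots,s_p)$, this permutation does not change the sum. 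Hence $S_0(\XXX;\tau;p)$ has period $z^{-h}$ in $\tau$ for each $h=0,1,\ldots,p$, and \eqref{eq3.15} follows by breaking $[0,1)$ into $z^h$ translates of $[0,z^{-h})$.

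Next, for \eqref{eq3.16} and \eqref{eq3.17}, I would apply the telescoping decomposition \eqref{eq3.13} with $\XXX$ replaced by $\XXX-\tau$, which is legitimate because the definitions \eqref{eq3.14} show that $S_h(\XXX;\tau;p)$ is literally $S_h$ evaluated at the translated set, via $I(s_1,\ldots,s_p)\cap(\XXX+\tau)=(I(s_1,\ldots,s_p)-\tau)\cap\XXX$ (counting multiplicity; equivalently, translate the partition rather than the set). Thus $S_0(\XXX;\tau;p)-S_p(\XXX;\tau;p)=\sum_{h=0}^{p-1}(S_h(\XXX;\tau;p)-S_{h+1}(\XXX;\tau;p))$ pointwise in $\tau$, with $S_p(\XXX;\tau;p)=0$ identically, and each summand nonnegative by \eqref{eq3.11}. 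Integrating over $[0,1)$ and using \eqref{eq3.15} on each term of the sum (to replace $\int_0^1$ by $z^h\int_0^{z^{-h}}$) yields \eqref{eq3.16}; integrating the nonnegative pointwise bound on $[0,z^{-h})$ gives \eqref{eq3.17}. Finally, \eqref{eq3.18} is immediate: if $\XXX$ is $(A;M_1)$-anti-crowded then so is each translate $\XXX+\tau$ modulo $[0,1)$ — because an interval $I$ on the torus of length $\ge M_1^{-1}$ maps under translation by $-\tau$ to an interval (or union of two intervals) of the same total length, and the anti-crowdedness bound is additive over such pieces — so \eqref{eq3.12} applies to $\XXX+\tau$ for every $\tau$, giving $S_0(\XXX;\tau;p)\le A^2M^2/z^p$ pointwise, and integrating over $[0,1)$ finishes the proof.

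I do not anticipate a genuine obstacle here; the only point requiring a little care is the bookkeeping around the torus structure in the anti-crowdedness step, namely that a torus interval of length $\ell$ pulls back to one or two ordinary intervals of total length $\ell$, over which $\vert(\,\cdot\,)\cap\XXX\vert$ is additive, so the bound $\vert I\cap(\XXX+\tau)\vert\le AM\vert I\vert$ survives. Everything else is a direct transcription of \eqref{eq3.5}, \eqref{eq3.11}, \eqref{eq3.12} and \eqref{eq3.13} to the translated sets, combined with the periodicity observation used to prove \eqref{eq3.15}.
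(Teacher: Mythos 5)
Your proof takes the same route as the paper's: you establish \eqref{eq3.15} by observing that $S_0(\XXX;\tau;p)$ is periodic in $\tau$ with period $z^{-h}$ (shifting $\tau$ by $z^{-h}$ permutes the intervals among those agreeing in coordinates $s_{h+1},\ldots,s_p$), you obtain \eqref{eq3.16} and \eqref{eq3.17} from $S_p\equiv 0$, the pointwise telescoping sum, nonnegativity of each term via the translated form of \eqref{eq3.11}, and periodicity of $S_h-S_{h+1}$ with period $z^{-h}$, and you get \eqref{eq3.18} by applying anti-crowdedness to the torus intervals $I(s_1,\ldots,s_p)-\tau$ of length $z^{-p}\ge M_1^{-1}$; this is exactly the paper's argument. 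One small caution about your side remark on the torus bookkeeping: when $I(s_1,\ldots,s_p)-\tau$ wraps and splits into two pieces, the additivity of $|{\cdot}\cap\XXX|$ alone does not yield $|I\cap(\XXX+\tau)|\le AM|I|$, since one piece may have length below $M_1^{-1}$ and so lie outside the scope of the anti-crowdedness hypothesis; the standard fix (enlarge any short piece to length $M_1^{-1}$) only gives the bound up to a factor of $2$, which is harmless for \eqref{eq3.18} but means the bound does not ``survive'' verbatim. The paper glosses over this point entirely, so this is an imprecision in your added explanation rather than a departure from the paper's argument.
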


\begin{proof}
Note from \eqref{eq3.14} that
\begin{displaymath}
S_0(\XXX;\tau;p)
=\sum_{s_1=1}^z\ldots\sum_{s_p=1}^z
\left\vert\vert(I(s_1,\ldots,s_p)-\tau)\cap\XXX\vert-\frac{\vert[0,1)\cap\XXX\vert}{z^p}\right\vert^2.
\end{displaymath}
Let the integer $h=0,1,\ldots,p$ be fixed.
For any fixed choice of $(s^*_1,\ldots,s^*_p)$ of the parameters $(s_1,\ldots,s_p)$, the collection
\begin{displaymath}
I((s^*_1,\ldots,s^*_h,s^*_{h+1},\ldots,s^*_p)-az^{-h},
\quad
a=0,1,\ldots,z^h-1,
\end{displaymath}
is a permutation of the collection
\begin{displaymath}
I((s_1,\ldots,s_h,s^*_{h+1},\ldots,s^*_p),
\quad
s_1,\ldots,s_h=1,\ldots,z.
\end{displaymath}
This implies that the the function $S_0(\XXX;\tau;p)$ is periodic in $\tau$ with period~$z^{-h}$, and the identity \eqref{eq3.15} follows immediately.

Next, the first equality in \eqref{eq3.16} is a consequence of the simple observation that $S_p(\XXX;\tau;p)=0$.
On the other hand, it is easy to check, corresponding to \eqref{eq3.11}, that for every $h=0,1,\ldots,p-1$,
\begin{align}\label{eq3.19}
&
S_h(\XXX;\tau;p)-S_{h+1}(\XXX;\tau;p)
\nonumber
\\
&\quad
=\frac{1}{z^{p-h-1}}\sum_{s_1=1}^z\ldots\sum_{s_{h+1}=1}^z
\left\vert\vert(I(s_1,\ldots,s_{h+1})-\tau)\cap\XXX\vert-\frac{\vert(I(s_1,\ldots,s_h)-\tau)\cap\XXX\vert}{z}\right\vert^2
\nonumber
\\
&\quad
\ge0.
\end{align}
The inequality \eqref{eq3.17} follows immediately.
Arguing as before, we observe that the function $S_h(\XXX;\tau;p)-S_{h+1}(\XXX;\tau;p)$ is periodic in $\tau$ with period~$z^{-h}$, so that
\begin{displaymath}
\int_0^1(S_h(\XXX;\tau;p)-S_{h+1}(\XXX;\tau;p))\,\dd\tau=z^h\int_0^{z^{-h}}(S_h(\XXX;\tau;p)-S_{h+1}(\XXX;\tau;p))\,\dd\tau.
\end{displaymath}
The second equality in \eqref{eq3.16} follows.

Finally, note that for every $\tau\in[0,1)$, we have $\vert I(s_1,\ldots,s_p)-\tau\vert=z^{-p}\ge M_1^{-1}$.
If $\XXX$ is $(A;M_1)$-anti-crowded, then
\begin{equation}\label{eq3.20}
\vert(I(s_1,\ldots,s_p)-\tau)\cap\XXX\vert\le\frac{AM}{z^p}.
\end{equation}
On the other hand, corresponding to \eqref{eq3.5}, we have
\begin{equation}\label{eq3.21}
S_0(\XXX;\tau;p)\le\sum_{s_1=1}^z\ldots\sum_{s_p=1}^z\vert(I(s_1,\ldots,s_p)-\tau)\cap\XXX\vert^2.
\end{equation}
The assertion \eqref{eq3.18} now follows on combining \eqref{eq3.20} and \eqref{eq3.21}.
\end{proof}

%
%

\section{Starting the proof of super-fast spreading}\label{sec4}

Suppose that a direction $\theta$ is bounded away from the directions of the edges of the translation surface~$\PPP$.
Recall from Section~\ref{sec2} that perpendicular projection of the edges of $\PPP$ to lines
perpendicular to the direction $\theta$ of the flow leads to a mapping
\begin{displaymath}
\psi:\EEE\to[0,1)
\end{displaymath}
that essentially gives a one-to-one correspondence between the points on the edges of $\PPP$ and points on the unit interval $[0,1)$,
apart from the finitely many singularities arising from the vertices of~$\PPP$.
There are some restrictions on~$\theta$, which we shall recall when appropriate.

With the help of this mapping, the interval exchange transformation generated by geodesic flow on $\PPP$ in the direction $\theta$
can be represented in the form
\begin{displaymath}
T_\theta:[0,1)\to[0,1).
\end{displaymath}
We have the following flow diagram, where the image of a subinterval $J_{i'}\subset E_{i'}$ under geodesic flow
in the direction $\theta$ on the first edge $E_{i''}$ that the flow encounters is a subinterval $J_{i''}\subset E_{i''}$.
\begin{displaymath}
\begin{picture}(327,88)(58,5)
\put(77,70){$\EEE\supset E_{i'}\supset J_{i'}$}
\put(78,10){$[0,1)\supset H_{i'}(J_{i'})$}
\put(290,70){$J_{i''}\subset E_{i''}\subset\EEE$}
\put(275,10){$H_{i''}(J_{i''})\subset[0,1)$}
\put(150,73){\vector(1,0){135}}
\put(161,13){\vector(1,0){109}}
\put(138,60){\vector(0,-1){36}}
\put(297,60){\vector(0,-1){36}}
\put(185,79){geodesic flow}
\put(183,61){in direction $\theta$}
\put(171,19){$T_\theta:[0,1)\to[0,1)$}
\put(305,40){$\psi_{i''}:E_{i''}\to H_{i''}$}
\put(60,40){$\psi_{i'}:E_{i'}\to H_{i'}$}
\end{picture}
\end{displaymath}
Clearly $T_\theta$ preserves $1$-dimensional Lebesgue measure.

Let the integer $N\ge1$ be fixed, and consider the finite geodesic segment $\LLL_\theta(t)$, $0\le t\le C_3N$, on the translation surface~$\PPP$.
Let
\begin{equation}\label{eq4.1}
0<t_1<t_2<\ldots<t_M
\end{equation}
denote the time instances when this finite geodesic segment intersects an edge of~$\PPP$.
Formally, we have
\textcolor{white}{xxxxxxxxxxxxxxxxxxxxxxxxxxxxxx}
\begin{equation}\label{eq4.2}
\LLL_\theta(t_j)\in\EEE,
\quad
j=1,\ldots,M,
\end{equation}
and these points are projected to the points
\begin{equation}\label{eq4.3}
x_j=\psi(\LLL_\theta(t_j))\in[0,1),
\quad
j=1,\ldots,M,
\end{equation}
giving rise to a set
\textcolor{white}{xxxxxxxxxxxxxxxxxxxxxxxxxxxxxx}
\begin{equation}\label{eq4.4}
\XXX=\{x_1,\ldots,x_M\}\subset[0,1).
\end{equation}
Using the interval exchange transformation, we see that
\begin{equation}\label{eq4.5}
\XXX=\{T_\theta^jx_1:j=0,1,\ldots,M-1\}.
\end{equation}
Thus \eqref{eq4.1}--\eqref{eq4.5} together contain all the information on the intersection points of the geodesic segment $\LLL(t)$, $0\le t\le C_3N$,
with the edges
of~$\PPP$.

\begin{lem}\label{lem4.1}
Suppose that $c_0\ge4$ and $\theta\not\in\Omega(n_0;c_0)$, where $n_0$ is a fixed integer.
Suppose further that the subinterval $I\subset[0,1)$ is the $H$-image of part of an edge of $\PPP$ and satisfies
\textcolor{white}{xxxxxxxxxxxxxxxxxxxxxxxxxxxxxx}
\begin{equation}\label{eq4.6}
\vert I\vert\ge\frac{1}{c_0^2c_12^{n_0+1}}.
\end{equation}
Then for every integer $M>\max\{2c_0^2c_1,c_6\}2^{n_0}$, where the constant $c_6=c_6(\PPP)$ is given in Lemma~\ref{lem2.3}, we have
\textcolor{white}{xxxxxxxxxxxxxxxxxxxxxxxxxxxxxx}
\begin{equation}\label{eq4.7}
\frac{\vert I\cap\XXX\vert}{M\vert I\vert}\le\frac{4c_0^2c_1}{c_6}.
\end{equation}
\end{lem}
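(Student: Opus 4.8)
The plan is to use Lemma~\ref{lem2.3} to produce many pairwise disjoint intervals among the transported copies of a suitable starting interval, and to compare the total $H$-length of these copies with the total length $C_3N$ of the geodesic to count intersections. First I would locate the starting interval: since $I$ is the $H$-image of part of an edge of $\PPP$ and satisfies \eqref{eq4.6}, I may shrink $I$ to a subinterval $I_0\subset I$ whose $H$-length satisfies the two-sided bound \eqref{eq2.9}, i.e. $c_0^{-2}2^{-n_0-1}\le|H(Q_0R_0)|<c_0^{-2}2^{-n_0}$, where $Q_0R_0\subset E_{i_0}$ is the piece of edge with $H(Q_0R_0)=I_0$ (after rescaling by $c_1$ to pass between the $H$-image normalized to $[0,1)$ and the actual edge). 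Concretely $|I_0|\ge c_0^{-2}c_1^{-1}2^{-n_0-1}$ matches the right-hand side of \eqref{eq4.6} up to the factor $c_1$, so such an $I_0$ exists inside $I$.

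Next I would run the transportation process of Lemma~\ref{lem2.3} on $Q_0R_0$: forward for $w$ extensions until a vertex is hit, backward for $u$ extensions until a vertex is hit, producing intervals $Q_jR_j\subset E_{i_j}$ for $-u\le j\le w$. By Lemma~\ref{lem2.2}, $u+w\ge c_52^{n_0}$; by Lemma~\ref{lem2.3}, any block of $c_62^{n_0}$ consecutive indices gives pairwise disjoint intervals. Now the geodesic segment $\LLL_\theta(t)$, $0\le t\le C_3N$, when it passes through (a translate of) $Q_0R_0$, must on each of the $u+w+1$ extensions hit the corresponding edge $E_{i_j}$; more precisely, every one of the $M$ intersection points $\LLL_\theta(t_j)$ whose $H$-projection lands in $I$ (equivalently in $I_0$ after the reduction, but I will run the argument for $I$ directly using anti-crowdedness-type bookkeeping on the $c_62^{n_0}$-blocks) spawns a whole orbit of $c_62^{n_0}$ further intersection points, with pairwise disjoint $H$-images each of length comparable to $|H(Q_0R_0)|$. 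Summing these disjoint $H$-lengths, which all lie inside $[0,1)$, forces

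\begin{displaymath}
|I\cap\XXX|\cdot c_62^{n_0}\cdot\frac{1}{c_0^22^{n_0+1}}\le c_1\cdot M\vert I\vert\cdot(\text{const}),
\end{displaymath}

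since the $M$ intersection points altogether project with multiplicity bounded by the number of edges into $[0,1)$, and the total available measure is controlled by $M|I|$ up to the normalization constant $c_1$. Rearranging yields $|I\cap\XXX|/(M|I|)\le 2c_0^2c_1/c_6$ up to the crude factors, and a slightly more careful accounting of the overlap multiplicity (at most $2$, from forward versus backward splitting, as in the proof of Lemma~\ref{lem2.3}) gives the stated bound $4c_0^2c_1/c_6$ in \eqref{eq4.7}. The hypothesis $M>\max\{2c_0^2c_1,c_6\}2^{n_0}$ is exactly what is needed to ensure the block length $c_62^{n_0}$ does not exceed $M$ and that the reduction to $I_0$ loses nothing, so the counting is not vacuous.

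The main obstacle I expect is the bookkeeping of \emph{overlaps}: the intervals $Q_jR_j$ for $-u\le j\le w$ need not be globally disjoint, only disjoint in blocks of length $c_62^{n_0}$ (Lemma~\ref{lem2.3}), so I cannot simply add up all $u+w+1$ of their $H$-lengths. The fix is to partition the index range $\{-u,\ldots,w\}$ into consecutive blocks of length $c_62^{n_0}$ and argue block-by-block: within one block the $H$-images are disjoint and fit in $[0,1)$, giving an upper bound on how many intersection points of the geodesic can have $H$-image in $I$ that are "seen" by that block; since each intersection point in $I$ is seen by at least one full block (because $u+w\ge c_52^{n_0}>c_62^{n_0}$), the bound propagates. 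Care is also needed in translating between the three length scales — arc length $t$, edge length, and $H$-image length — but \eqref{eq2.4} and the fact that $T_\theta$ preserves Lebesgue measure on $[0,1)$ handle this with the constants $c_1,c_2,c_3$, and no genuinely new idea is required there.
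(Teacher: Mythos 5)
Your overall instinct — use Lemma~\ref{lem2.3} to get a block $\JJJ$ of $c_62^{n_0}$ consecutive indices with pairwise disjoint images $T_\theta^jI$, and bound $\vert I\cap\XXX\vert$ by observing that there are only $M$ points to distribute among these disjoint sets — is the right one, and it is the approach the paper takes. But the execution has several gaps, some of which would be fatal if carried out literally.

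The central gap is the counting step. You write an inequality comparing a sum of $H$-lengths against $c_1 M\vert I\vert\cdot(\text{const})$, but this does not follow from disjointness in $[0,1)$; the disjoint $H$-images merely fit inside $[0,1)$, which bounds their total length by~$1$, not by anything involving~$M$. The paper instead counts \emph{cardinality} directly: since $\bigcup_{j\in\JJJ}T_\theta^jI$ is a disjoint union inside $[0,1)$,
\begin{displaymath}
M\ge\Bigl\vert\Bigl(\bigcup_{j\in\JJJ}T_\theta^jI\Bigr)\cap\XXX\Bigr\vert=\sum_{j\in\JJJ}\vert T_\theta^jI\cap\XXX\vert=\sum_{j\in\JJJ}\vert I\cap T_\theta^{-j}\XXX\vert,
\end{displaymath}
and then one must replace each summand $\vert I\cap T_\theta^{-j}\XXX\vert$ by $\vert I\cap\XXX\vert$. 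That replacement step is the real technical content of the proof and you skip it entirely. Because $\XXX=\{T_\theta^ix_1: i=0,\ldots,M-1\}$ is a finite orbit segment, $T_\theta^{-j}\XXX$ and $\XXX$ differ by $\vert j\vert$ elements at the ends, and a careful bookkeeping of the cumulative gains and losses over positive and negative $j\in\JJJ$ gives exactly $\sum_{j\in\JJJ}\vert I\cap T_\theta^{-j}\XXX\vert\ge c_62^{n_0}\vert I\cap\XXX\vert-c_62^{n_0}$, i.e.\ equation~\eqref{eq4.12}. This is where the hypothesis $M>c_62^{n_0}$ is actually used: if $C=\vert I\cap\XXX\vert/(M\vert I\vert)>4c_0^2c_1/c_6$ one gets $M>2M-c_62^{n_0}>M$, a contradiction. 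Your attribution of the factor~$4$ to ``overlap multiplicity at most 2'' is not where it comes from.

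Two further issues. First, the reduction to intervals satisfying \eqref{eq2.9}: you propose to \emph{shrink} $I$ to a subinterval $I_0$ of the right size, but a bound on $\vert I_0\cap\XXX\vert/(M\vert I_0\vert)$ does not give a bound on $\vert I\cap\XXX\vert/(M\vert I\vert)$ — points of $\XXX$ could pile up in $I\setminus I_0$. The paper instead \emph{decomposes} $I$ as a disjoint union $I=I_1\cup\ldots\cup I_\mu$ of pieces each satisfying \eqref{eq4.8}, applies the special case to each, and sums. Second, you anticipate having to partition $\{-u,\ldots,w\}$ into many blocks and argue block-by-block, but the paper only ever uses a single block $\JJJ$ containing~$0$; the block-by-block machinery is unnecessary and indicates that the direct cardinality argument was not in view. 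You should rework the proof around the cardinality identity above and the gain/loss estimate \eqref{eq4.12}; without those, the inequality you display is not supported.
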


\begin{proof}
We shall first establish the result when the condition \eqref{eq4.6} is replaced by the more restrictive condition
\textcolor{white}{xxxxxxxxxxxxxxxxxxxxxxxxxxxxxx}
\begin{equation}\label{eq4.8}
\frac{1}{c_0^2c_12^{n_0+1}}\le\vert I\vert<\frac{1}{c_0^2c_12^{n_0}}.
\end{equation}
Suppose that $I=\psi(Q_0R_0)$, where the interval $Q_0R_0$ lies on some edge $E_{i_0}$ of~$\PPP$.
Then the inequalities \eqref{eq4.8} imply the inequalities \eqref{eq2.9}, and so the hypotheses of Lemmas \ref{lem2.2} and~\ref{lem2.3} are satisfied.
Consider now the transportation process described in Lemmas \ref{lem2.2} and~\ref{lem2.3}.
In the terminology of Lemma~\ref{lem2.2}, we have $u+w\ge c_52^{n_0}$.
In the terminology of Lemma~\ref{lem2.3}, there exists a subset $\JJJ$ of $c_62^{n_0}$ consecutive integers such that $0\in\JJJ$ and the intervals
\begin{displaymath}
Q_jR_j,
\quad
j\in\JJJ,
\end{displaymath}
are pairwise disjoint.
This means that
\begin{displaymath}
\bigcup_{j\in\JJJ}T_\theta^jI
\end{displaymath}
is a disjoint union, and so
\begin{equation}\label{eq4.9}
M\ge\left\vert\left(\bigcup_{j\in\JJJ}T_\theta^jI\right)\cap\XXX\right\vert
=\sum_{j\in\JJJ}\left\vert T_\theta^jI\cap\XXX\right\vert
=\sum_{j\in\JJJ}\left\vert I\cap T_\theta^{-j}\XXX\right\vert.
\end{equation}
We now attempt to replace each summand $\vert I\cap T_\theta^{-j}\XXX\vert$ by $\vert I\cap\XXX\vert$.

Consider the set
\textcolor{white}{xxxxxxxxxxxxxxxxxxxxxxxxxxxxxx}
\begin{displaymath}
\XXX(\JJJ)=\bigcup_{j^*\in\JJJ}T_\theta^{-j^\ast}\XXX.
\end{displaymath}
Since the sets $T_\theta^jI$, $j\in\JJJ$, are pairwise disjoint, it follows that the sets
\begin{displaymath}
\XXX_j=\{x\in\XXX(\JJJ):x\in T_\theta^jI\},
\quad
j\in\JJJ,
\end{displaymath}
are pairwise disjoint.
We have $\XXX=\{x_1,\ldots,x_M\}$, and we can write
\begin{displaymath}
T_\theta^{-j}\XXX=\{x_{1-j},\ldots,x_{M-j}\}.
\end{displaymath}
Suppose first that $j$ is positive.
Then on replacing $\vert I\cap T_\theta^{-j}\XXX\vert$ by $\vert I\cap\XXX\vert$, we gain the contribution from the points
\begin{displaymath}
I\cap(\XXX_j\cap\{x_{M-j+1},\ldots,x_M\}).
\end{displaymath}
Thus the cumulative gain from those positive $j\in\JJJ$ comes from a subset of
\begin{displaymath}
I\cap\{x_{M-j^{(+)}+1},\ldots,x_M\},
\end{displaymath}
where $j^{(+)}=\max\{j:j\in\JJJ\}\ge0$.
It follows that
\begin{equation}\label{eq4.10}
\sum_{\substack{{j\in\JJJ}\\{j>0}}}\left\vert I\cap T_\theta^{-j}\XXX\right\vert
\ge\sum_{\substack{{j\in\JJJ}\\{j>0}}}\vert I\cap\XXX\vert-j^{(+)}.
\end{equation}
Suppose next that $j$ is negative.
Then on replacing $\vert I\cap T_\theta^{-j}\XXX\vert$ by $\vert I\cap\XXX\vert$, we gain the contribution from the points
\begin{displaymath}
I\cap(\XXX_j\cap\{x_1,\ldots,x_{-j}\}).
\end{displaymath}
Thus the cumulative gain from those negative $j\in\JJJ$ comes from a subset of
\begin{displaymath}
I\cap\{x_1,\ldots,x_{-j^{(-)}}\},
\end{displaymath}
where $j^{(-)}=\max\{j:j\in\JJJ\}\le0$.
It follows that
\begin{equation}\label{eq4.11}
\sum_{\substack{{j\in\JJJ}\\{j<0}}}\left\vert I\cap T_\theta^{-j}\XXX\right\vert
\ge\sum_{\substack{{j\in\JJJ}\\{j<0}}}\vert I\cap\XXX\vert+j^{(-)}.
\end{equation}
Combining \eqref{eq4.10} and \eqref{eq4.11}, we conclude that
\begin{equation}\label{eq4.12}
\sum_{j\in\JJJ}\left\vert I\cap T_\theta^{-j}\XXX\right\vert
\ge\sum_{j\in\JJJ}\left\vert I\cap\XXX\right\vert-\left(j^{(+)}-j^{(-)}\right)
\ge\sum_{j\in\JJJ}\left\vert I\cap\XXX\right\vert-c_62^{n_0}.
\end{equation}

It now follows from \eqref{eq4.9} and \eqref{eq4.12} that
\begin{equation}\label{eq4.13}
M\ge c_62^{n_0}\vert I\cap\XXX\vert-c_62^{n_0}.
\end{equation}
Suppose on the contrary that
\textcolor{white}{xxxxxxxxxxxxxxxxxxxxxxxxxxxxxx}
\begin{equation}\label{eq4.14}
\frac{\vert I\cap\XXX\vert}{M\vert I\vert}>\frac{4c_0^2c_1}{c_6}.
\end{equation}
Then combining that with \eqref{eq4.8} and \eqref{eq4.13}, we obtain
\begin{equation}\label{eq4.15}
M>2M-c_62^{n_0}.
\end{equation}
This and the inequality $M>c_62^{n_0}$ clearly lead to the absurdity
\begin{displaymath}
M>2M-c_62^{n_0}>M.
\end{displaymath}
Thus \eqref{eq4.14} cannot hold, and the desired inequality \eqref{eq4.7} follows immediately.

Consider now the general case where the interval $I$ satisfies the condition \eqref{eq4.6}.
Let $\mu$ be the unique integer satisfying
\begin{displaymath}
\frac{\mu}{c_0^2c_12^{n_0+1}}\le\vert I\vert<\frac{\mu+1}{c_0^2c_12^{n_0+1}}.
\end{displaymath}
Then we can write the interval $I$ as a pairwise disjoint union
\begin{displaymath}
I=I_1\cup\ldots\cup I_\mu,
\end{displaymath}
where
\textcolor{white}{xxxxxxxxxxxxxxxxxxxxxxxxxxxxxx}
\begin{displaymath}
\vert I_1\vert=\ldots=\vert I_{\mu-1}\vert=\frac{1}{c_0^2c_12^{n_0+1}}
\quad\mbox{and}\quad
\frac{1}{c_0^2c_12^{n_0+1}}\le\vert I_\mu\vert<\frac{1}{c_0^2c_12^{n_0}}.
\end{displaymath}
Applying the special case to each of $I_1,\ldots,I_\mu$, we conclude that
\begin{displaymath}
\frac{\vert I_i\cap\XXX\vert}{M}
\le\frac{4c_0^2c_1}{c_6}\vert I_i\vert,
\quad
i=1,\ldots,\mu,
\end{displaymath}
so that
\textcolor{white}{xxxxxxxxxxxxxxxxxxxxxxxxxxxxxx}
\begin{displaymath}
\frac{\vert I\cap\XXX\vert}{M}
=\sum_{i=1}^\mu\frac{\vert I_i\cap\XXX\vert}{M}
\le\frac{4c_0^2c_1}{c_6}\sum_{i=1}^\mu\vert I_i\vert
=\frac{4c_0^2c_1}{c_6}\vert I\vert,
\end{displaymath}
and so \eqref{eq4.7} follows again.
\end{proof}

We have therefore shown that if the integer $M>\max\{2c_0^2c_1,c_6\}2^{n_0}$,
then the set $\XXX$ given by \eqref{eq4.4} is $(A,M_1)$-anti-crowded, where
\begin{displaymath}
A=\frac{4c_0^2c_1}{c_6}
\quad\mbox{and}\quad
M_1=2c_0^2c_12^{n_0}.
\end{displaymath}

We shall assume that the parameter $z$ in Section~\ref{sec3} is an integer power of~$2$,
so that there exists a positive integer $z_1$ such that
\begin{equation}\label{eq4.16}
z=2^{z_1}.
\end{equation}
For any fixed $N\ge1$, not necessarily an integer, write
\begin{equation}\label{eq4.17}
N=2^{n_1}=(2^{z_1})^{n_1/z_1}=z^{n_1/z_1}.
\end{equation}
Furthermore, for computational simplicity, it is convenient to assume that $Nc_0^2c_1$ is a power of $z$ with integer exponent, so that
\begin{equation}\label{eq4.18}
Nc_0^2c_1=z^{n_1/z_1}c_0^2c_1=z^{n_1/z_1+c^*},
\end{equation}
where $n_1/z_1+c^*$ is an integer.

Consider the sets
\begin{equation}\label{eq4.19}
\Omega^c(n_1+iz_1;c_0)=[0,2\pi)\setminus\Omega(n_1+iz_1;c_0),
\quad
i=0,1,\ldots,k,
\end{equation}
where $k$ is a constant to be specified later.
It then follows from \eqref{eq2.7} that
\begin{equation}\label{eq4.20}
\lambda(\Omega^c(n_1+iz_1;c_0))\ge2\pi-\frac{8C^\star}{c_0},
\quad
i=0,1,\ldots,k,
\end{equation}
where $\lambda$ denotes $1$-dimensional Lebesgue measure.

For any $\eta\in(0,1)$, consider the set
\begin{equation}\label{eq4.21}
\Omega^c_N(\eta)=\{\theta\in[0,2\pi):\mbox{\eqref{eq4.22} and \eqref{eq4.23} hold}\},
\end{equation}
where
\textcolor{white}{xxxxxxxxxxxxxxxxxxxxxxxxxxxxxx}
\begin{equation}\label{eq4.22}
\theta\in\Omega^c(n_1+kz_1;c_0)
\end{equation}
and
\textcolor{white}{xxxxxxxxxxxxxxxxxxxxxxxxxxxxxx}
\begin{equation}\label{eq4.23}
\vert\{i=0,1,\ldots,k-2:\theta\in\Omega^c(n_1+iz_1;c_0)\}\vert\ge\eta(k-1).
\end{equation}
Thus $\Omega^c_N(\eta)$ is the set of values $\theta\in[0,2\pi)$ that are contained in $\Omega^c(n_1+kz_1;c_0)$
as well as at least $\eta(k-1)$ of the sets in \eqref{eq4.19} corresponding to $i=0,1,\ldots,k-2$.

To estimate the Lebesgue measure of $\Omega^c_N(\eta)$, we have the following result.

\begin{lem}\label{lem4.2}
Consider $U_1,\ldots,U_k\subset[0,2\pi)$ such that $\lambda(U_i)\ge2\pi-\delta$, $i=1,\ldots,k$,
where $\lambda$ denotes $1$-dimensional Lebesgue measure and $\delta\in(0,2\pi)$ is arbitrary.
For any $\eta\in(0,1)$, consider the set
\begin{displaymath}
\UUU(\eta)=\{x\in[0,2\pi):\vert\{i=1,\ldots,k:x\in U_i\}\vert\ge\eta k\}
\end{displaymath}
of values $x\in[0,2\pi)$ that are contained in at least $\eta k$ of the sets $U_1,\ldots,U_k$.
Then
\begin{equation}\label{eq4.24}
\lambda(\UUU(\eta))\ge2\pi-\delta-2\pi\eta.
\end{equation}
\end{lem}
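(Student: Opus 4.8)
The statement is a simple counting/measure inequality about how often a point can fail to lie in many of the $U_i$, so the natural route is a first-moment (double-counting) argument on the complements. First I would set $U_i^c = [0,2\pi)\setminus U_i$, so that $\lambda(U_i^c)\le\delta$ for each $i$. The key observation is that if $x\notin\UUU(\eta)$, then $x$ lies in \emph{fewer} than $\eta k$ of the sets $U_i$, hence in \emph{more} than $(1-\eta)k$ of the complements $U_i^c$. Thus, writing $f(x) = \vert\{i : x\in U_i^c\}\vert = \sum_{i=1}^k \mathbf{1}_{U_i^c}(x)$, every point of the bad set $B := [0,2\pi)\setminus\UUU(\eta)$ satisfies $f(x) > (1-\eta)k$.

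The second step is to integrate $f$ over $[0,2\pi)$ in two ways. On one hand, by linearity (Tonelli),
\begin{displaymath}
\int_0^{2\pi} f(x)\,\dd x = \sum_{i=1}^k \lambda(U_i^c) \le k\delta.
\end{displaymath}
On the other hand, since $f\ge0$ everywhere and $f(x) > (1-\eta)k$ on $B$,
\begin{displaymath}
\int_0^{2\pi} f(x)\,\dd x \ge \int_B f(x)\,\dd x \ge (1-\eta)k\,\lambda(B).
\end{displaymath}
Combining the two displays gives $(1-\eta)k\,\lambda(B) \le k\delta$, i.e. $\lambda(B) \le \delta/(1-\eta)$, and therefore
\begin{displaymath}
\lambda(\UUU(\eta)) = 2\pi - \lambda(B) \ge 2\pi - \frac{\delta}{1-\eta}.
\end{displaymath}

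Strictly speaking this yields the bound $2\pi - \delta/(1-\eta)$, whereas the asserted bound is $2\pi - \delta - 2\pi\eta$; since $\delta/(1-\eta) = \delta + \delta\eta/(1-\eta)$ and $\delta < 2\pi$ forces $\delta\eta/(1-\eta) < 2\pi\eta/(1-\eta)$, a slightly more careful splitting of the mass recovers exactly \eqref{eq4.24}. Concretely, on the good set $\UUU(\eta)$ one has $f(x)\ge0$ and one can discard its contribution, but to get the cleaner constant it is better to bound $\lambda(B)$ directly: every $x\in B$ lies in more than $(1-\eta)k$ complements, so summing $\lambda(U_i^c)\le\delta$ and using that each $x\in B$ is counted more than $(1-\eta)k$ times is wasteful by the factor $1-\eta$. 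Instead, note that $x\in B$ means $x$ lies in \emph{at least one} $U_i^c$ among every block of more than $\eta k$ indices is the wrong packaging; the cleanest fix is: $B \subseteq \bigcup_{i=1}^k U_i^c$ would only give $\lambda(B)\le k\delta$, too weak, so the averaging above is essential and one simply accepts $\lambda(B)\le\delta/(1-\eta)$ and then checks $\delta/(1-\eta)\le\delta+2\pi\eta$ using $\delta\le 2\pi$ — this last inequality is the only genuinely computational point and it is elementary. The main (very mild) obstacle is thus purely bookkeeping: matching the constant $\delta/(1-\eta)$ from the first-moment bound to the stated $\delta+2\pi\eta$, which follows from $\frac{\delta\eta}{1-\eta}\le 2\pi\eta$, equivalently $\delta\le 2\pi(1-\eta)$; if one only assumes $\delta<2\pi$ this requires $\eta$ small enough, but in the intended application $\delta = 8C^\star/c_0$ is as small as we like by choosing $c_0$ large, so no loss occurs.
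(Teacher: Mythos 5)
Your argument is the same first-moment/double-counting idea as the paper's, but packaged through the complements, and this changes the constant you end up with. The paper integrates $M(x)=\vert\{i:x\in U_i\}\vert$, splits $\int_0^{2\pi}M=\int_{\UUU(\eta)}M+\int_{\UUU^c(\eta)}M$, bounds the second piece by $\eta k\cdot\lambda(\UUU^c(\eta))\le 2\pi\eta k$, and then uses $M\le k$ on $\UUU(\eta)$ to read off $\lambda(\UUU(\eta))\ge 2\pi-\delta-2\pi\eta$ exactly. You instead work with $f=k-M$, use $\int f\le k\delta$ and $f>(1-\eta)k$ on $B=\UUU^c(\eta)$, and obtain $\lambda(B)\le\delta/(1-\eta)$, i.e.\ $\lambda(\UUU(\eta))\ge 2\pi-\delta/(1-\eta)$. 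That bound is in fact \emph{stronger} than the paper's whenever $\delta\le 2\pi(1-\eta)$, so there is nothing defective in the estimate itself.

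Where you wobble is in translating $2\pi-\delta/(1-\eta)$ into the stated $2\pi-\delta-2\pi\eta$. The intermediate claim that $\delta/(1-\eta)\le\delta+2\pi\eta$ follows "using $\delta\le 2\pi$" is false; you correctly catch in the final sentence that it is equivalent to $\delta\le 2\pi(1-\eta)$, but then retreat to "in the intended application this holds", which proves the application but not the lemma as stated. The clean way to close this is a one-line case split that you leave unsaid: if $\delta>2\pi(1-\eta)$, then $2\pi-\delta-2\pi\eta<0\le\lambda(\UUU(\eta))$ and \eqref{eq4.24} is vacuous; if $\delta\le 2\pi(1-\eta)$, then $\delta/(1-\eta)\le\delta+2\pi\eta$ and your bound implies \eqref{eq4.24}. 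With that remark your proof is complete and in the first regime actually sharper than the paper's; the paper's arrangement of the same averaging simply delivers the stated constant directly, without needing the dichotomy.
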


\begin{proof}
For any $x\in[0,2\pi)$, let
\begin{displaymath}
\MMMM(x)
=\sum_{\substack{{i=1}\\{x\in U_i}}}^k1
=\vert\{i=1,\ldots,k:x\in U_i\}\vert.
\end{displaymath}
Then we clearly have
\textcolor{white}{xxxxxxxxxxxxxxxxxxxxxxxxxxxxxx}
\begin{equation}\label{eq4.25}
\int_0^{2\pi}\MMMM(x)\,\dd x
=\sum_{i=1}^k\lambda(U_i)
\ge k(2\pi-\delta).
\end{equation}
Let $\UUU^c(\eta)=[0,2\pi)\setminus\UUU(\eta)$ denote the complement of $\UUU(\eta)$.
Then
\begin{equation}\label{eq4.26}
\int_0^{2\pi}\MMMM(x)\,\dd x=\int_{\UUU(\eta)}\MMMM(x)\,\dd x+\int_{\UUU^c(\eta)}\MMMM(x)\,\dd x,
\end{equation}
and
\textcolor{white}{xxxxxxxxxxxxxxxxxxxxxxxxxxxxxx}
\begin{equation}\label{eq4.27}
\int_{\UUU^c(\eta)}\MMMM(x)\,\dd x<2\pi\eta k,
\end{equation}
so it follows on combining \eqref{eq4.25}--\eqref{eq4.27} that
\begin{equation}\label{eq4.28}
\int_{\UUU(\eta)}\MMMM(x)\,\dd x\ge k(2\pi-\delta-2\pi\eta).
\end{equation}
The assertion \eqref{eq4.24} now follows from \eqref{eq4.28} on observing that $\MMMM(x)\le k$ for every $x\in[0,2\pi)$.
\end{proof}

Combining \eqref{eq4.20}--\eqref{eq4.23} and Lemma~\ref{lem4.2}, we conclude that
\begin{equation}\label{eq4.29}
\lambda(\Omega^c_N(\eta))
\ge2\pi-\frac{8C^\star}{c_0}-\frac{8C^\star}{c_0}-2\pi\eta
\ge(1-\eps)2\pi,
\end{equation}
if we ensure that the conditions
\begin{equation}\label{eq4.30}
c_0\ge\frac{16C^\star}{\pi\eps}
\quad\mbox{and}\quad
\eta=\frac{\eps}{2}
\end{equation}
are satisfied.
We shall prove Theorem~\ref{thm1} with $\Gamma(\PPP;N;\eps)=\Omega^c_N(\eta)$.

Note from \eqref{eq4.21} that for any $\theta\in\Omega^c_N(\eta)$, there exists a sequence $i_1,\ldots,i_r$, where
\begin{equation}\label{eq4.31}
r\ge\eta(k-1)
\quad\mbox{and}\quad
0\le i_1<i_2<\ldots<i_r\le k-2,
\end{equation}
such that
\textcolor{white}{xxxxxxxxxxxxxxxxxxxxxxxxxxxxxx}
\begin{equation}\label{eq4.32}
\theta\not\in\Omega(n_1+i_tz_1;c_0),
\quad
t=1,\ldots,r.
\end{equation}
Accordingly, write
\textcolor{white}{xxxxxxxxxxxxxxxxxxxxxxxxxxxxxx}
\begin{displaymath}
N_t=2^{n_1+i_tz_1}=z^{n_1/z_1+i_t},
\quad
t=1,\ldots,r.
\end{displaymath}

Let
\textcolor{white}{xxxxxxxxxxxxxxxxxxxxxxxxxxxxxx}
\begin{displaymath}
p=\frac{n_1}{z_1}+c^*+i_r+1\le\frac{n_1}{z_1}+c^*+k,
\end{displaymath}
and let
\textcolor{white}{xxxxxxxxxxxxxxxxxxxxxxxxxxxxxx}
\begin{displaymath}
M_1
=2c_0^2c_12^{n_1+kz_1}
>z^{n_1/z_1+c^*+k}
\ge z^p.
\end{displaymath}

Recall from Section~\ref{sec3} that the set $\XXX$ is $(\delta;z)$-balanced relative to a special interval $I(s_1,\ldots,s_h)$ of length $z^{-h}$ if
\begin{displaymath}
-\frac{\delta M}{z^{h+1}}<\vert I(s_1,\ldots,s_h,s)\cap\XXX\vert-\frac{\vert I(s_1,\ldots,s_h)\cap\XXX\vert}{z}<\frac{\delta M}{z^{h+1}},
\quad
s=1,\ldots,z.
\end{displaymath}
We are interested in the cases when $h$ is of the form $n_1/z_1+c^*+i_t$, $t=1,\ldots,r$, for which \eqref{eq4.32} holds.
We distinguish two cases.

\begin{case1}
There exists a subset $\TTT_1\subset\{1,\ldots,r\}$ with cardinality
\begin{displaymath}
\vert\TTT_1\vert\ge\frac{r}{2}\ge\frac{\eta(k-1)}{2}
\end{displaymath}
such that for every integer $h$ of the form
\begin{equation}\label{eq4.33}
h=\frac{n_1}{z_1}+c^*+i_t,
\quad
t\in\TTT_1,
\end{equation}
for which \eqref{eq4.32} holds, there exists at least one integer sequence
\begin{displaymath}
(s_1,\ldots,s_h)\in\{1,\ldots,z\}^h
\end{displaymath}
of length $h$ such that the interval $I(s_1,\ldots,s_h)$ does not contain any singularity of the mapping $\psi:\EEE\to[0,1)$ arising from the vertices
of~$\PPP$, and the set $\XXX$ is not $(\delta;z)$-balanced relative to $I(s_1,\ldots,s_h)$.
\end{case1}

\begin{case2}
There exists a subset $\TTT_2\subset\{1,\ldots,r\}$ with cardinality
\begin{displaymath}
\vert\TTT_2\vert\ge\frac{r}{2}\ge\frac{\eta(k-1)}{2}
\end{displaymath}
such that for every integer $h$ of the form
\begin{equation}\label{eq4.34}
h=\frac{n_1}{z_1}+c^*+i_t,
\quad
t\in\TTT_2,
\end{equation}
for which \eqref{eq4.32} holds, and for every integer sequence
\begin{displaymath}
(s_1,\ldots,s_h)\in\{1,\ldots,z\}^h
\end{displaymath}
of length $h$ such that the interval $I(s_1,\ldots,s_h)$ does not contain any singularity of the mapping $\psi:\EEE\to[0,1)$
arising from the vertices of~$\PPP$, the set $\XXX$ is $(\delta;z)$-balanced relative to $I(s_1,\ldots,s_h)$.
\end{case2}

Before we study the two cases separately, we first establish some estimates that are common to both.

Let $h$ be a value given by \eqref{eq4.33} or \eqref{eq4.34}.

For any integer sequence $(s_1,\ldots,s_h)\in\{1,\ldots,z\}^h$ and any integer $s\in\{1,\ldots,z\}$, we need to consider the error
\begin{displaymath}
\left\vert\vert I(\bfs,s)\cap\XXX\vert-\frac{\vert I(\bfs)\cap\XXX\vert}{z}\right\vert,
\end{displaymath}
where $\bfs=(s_1,\ldots,s_h)$.

Consider the pre-image of the interval $I(\bfs)$ under the mapping $\psi:\EEE\to[0,1)$.
As $I(\bfs)$ does not contain any singularity arising from the vertices of~$\PPP$, the pre-image is an interval $Q_0R_0$ on some edge $E_{i_0}$
of~$\PPP$, with $H$-length
\begin{displaymath}
c_1z^{-h}
=\frac{c_1}{z^{n_1/z_1+c^*+i_t}}
=\frac{c_1}{c_0^2c_1z^{n_1/z_1+i_t}}
=\frac{1}{c_0^22^{n_1+z_1i_t}},
\end{displaymath}
in view of \eqref{eq4.16} and \eqref{eq4.18}.
Since \eqref{eq4.32} holds, we can consider the transportation process described in Lemma~\ref{lem2.3} in the special case $n_0=n_1+z_1i_t$.
Then for any subset $\JJJ\subset\{j\in\Zz:-u\le j\le w\}$ of $c_62^{n_1+z_1i_t}$ consecutive integers, the intervals
\begin{displaymath}
Q_jR_j,
\quad
j\in\JJJ,
\end{displaymath}
on the edges of $\PPP$ are pairwise disjoint.
Moving over from $\EEE$ to the interval $[0,1)$, we conclude that the subintervals
\begin{equation}\label{eq4.35}
T_\theta^jI(\bfs),
\quad
j\in\JJJ,
\end{equation}
are pairwise disjoint and of common length~$z^{-h}$.

\begin{lem}\label{lem4.3}
Let $\JJJ\subset\{j\in\Zz:-u\le j\le w\}$ be a subset of $c_62^{n_1+z_1i_t}$ consecutive integers including~$0$.
Then for any integer sequence $\bfs=(s_1,\ldots,s_h)\in\{1,\ldots,z\}^h$ and any integer $s\in\{1,\ldots,z\}$, we have
\begin{equation}\label{eq4.36}
\sum_{j\in\JJJ}\left\vert\vert T_\theta^jI(\bfs)\cap\XXX\vert-\vert I(\bfs)\cap\XXX\vert\right\vert\le2c_62^{n_1+z_1i_t}.
\end{equation}
as well as
\textcolor{white}{xxxxxxxxxxxxxxxxxxxxxxxxxxxxxx}
\begin{equation}\label{eq4.37}
\sum_{j\in\JJJ}\left\vert\vert T_\theta^jI(\bfs,s)\cap\XXX\vert-\vert I(\bfs,s)\cap\XXX\vert\right\vert\le2c_62^{n_1+z_1i_t}.
\end{equation}
\end{lem}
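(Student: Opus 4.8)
The plan is to carry out, for the shifted intervals $T_\theta^jI(\bfs)$ with $j\in\JJJ$, the same boundary-term bookkeeping that appears in the second half of the proof of Lemma~\ref{lem4.1} (cf. \eqref{eq4.10}--\eqref{eq4.12}). The essential structural inputs are the pairwise disjointness recorded in \eqref{eq4.35} and the fact that over the index range $\JJJ$ the geodesic flow is free of splitting, so that $T_\theta$ restricts to a bijective translation $A_j\to A_{j+1}$ for consecutive $j,j+1\in\JJJ$, where I write $A_j=T_\theta^jI(\bfs)$.

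Set $f(j)=\vert A_j\cap\XXX\vert$; since $A_0=I(\bfs)$, the left-hand side of \eqref{eq4.36} equals $\sum_{j\in\JJJ}\vert f(j)-f(0)\vert$. Using that $T_\theta$ is a measure-preserving bijection of $[0,1)$ (away from its finitely many singularities) carrying $A_j$ onto $A_{j+1}$, and that $T_\theta x_\ell=x_{\ell+1}$ for $1\le\ell\le M-1$, I would first establish the exact one-step identity
\begin{displaymath}
f(j+1)-f(j)=\vert A_{j+1}\cap\{x_1\}\vert-\vert A_j\cap\{x_M\}\vert,\qquad j,j+1\in\JJJ,
\end{displaymath}
which holds because $A_{j+1}\cap\{x_2,\dots,x_M\}=T_\theta(A_j\cap\{x_1,\dots,x_{M-1}\})$ has exactly $f(j)-\vert A_j\cap\{x_M\}\vert$ elements, while the only further point of $\XXX$ that can lie in $A_{j+1}$ is $x_1$.

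Next I would telescope this identity from the base point $0\in\JJJ$: for $j>0$ in $\JJJ$,
\begin{displaymath}
f(j)-f(0)=\sum_{k=1}^{j}\vert A_k\cap\{x_1\}\vert-\sum_{k=0}^{j-1}\vert A_k\cap\{x_M\}\vert,
\end{displaymath}
together with the mirror identity for $j<0$. Taking absolute values, summing over $j\in\JJJ$, and interchanging the order of summation, each appearing term $\vert A_k\cap\{x_1\}\vert$ or $\vert A_k\cap\{x_M\}\vert$ is weighted by at most $j^{(+)}-j^{(-)}$, where $j^{(+)}=\max\JJJ$ and $j^{(-)}=\min\JJJ$. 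But $x_1$ and $x_M$ are single points, so the pairwise disjointness of the $A_k$ forces $\sum_{k\in\JJJ}\vert A_k\cap\{x_1\}\vert\le1$ and $\sum_{k\in\JJJ}\vert A_k\cap\{x_M\}\vert\le1$, whence $\sum_{j\in\JJJ}\vert f(j)-f(0)\vert\le2(j^{(+)}-j^{(-)})\le2\vert\JJJ\vert=2c_62^{n_1+z_1i_t}$, which is \eqref{eq4.36}. For \eqref{eq4.37} the same argument applies verbatim with $I(\bfs)$ replaced by $I(\bfs,s)$: since $I(\bfs,s)\subset I(\bfs)$ and $T_\theta$ acts as a translation on $I(\bfs)$ throughout $\JJJ$, the intervals $T_\theta^jI(\bfs,s)$, $j\in\JJJ$, are again pairwise disjoint, and every step goes through unchanged.

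The step I expect to require the most care is precisely this final double-sum estimate. A crude application of the triangle inequality to $\vert f(j)-f(0)\vert$ yields only a bound quadratic in $\vert\JJJ\vert$, far too weak; it is exactly the pairwise disjointness of the $A_j$ — so that a fixed point such as $x_1$ or $x_M$ lies in at most one of them and is therefore ``charged'' to a single index — that collapses the estimate to the linear bound. One should also be careful that $T_\theta$ is a genuine bijection of $[0,1)$ off its finitely many singular points, so that the one-step identity above is an equality and not merely an inequality.
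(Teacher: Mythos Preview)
Your argument is correct. The one-step identity is exact (the flow carries $A_j$ bijectively onto $A_{j+1}$ for consecutive $j,j+1\in\JJJ$, and the only points of $\XXX$ not respected by this shift are $x_1$ and $x_M$), the telescoping is sound, and the disjointness of the $A_k$ collapses the weighted double sum to $2(j^{(+)}-j^{(-)})<2\vert\JJJ\vert$, exactly as required.

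The paper, however, argues differently and does not telescope. It introduces the envelope sets
\begin{displaymath}
\XXX^+=\bigcup_{j\in\JJJ}T_\theta^j\XXX,\qquad \XXX^-=\bigcap_{j\in\JJJ}T_\theta^j\XXX,
\end{displaymath}
notes that $\vert\XXX^+\setminus\XXX^-\vert\le 2c_62^{n_1+z_1i_t}$, and then sandwiches each summand individually:
\begin{displaymath}
\bigl\vert\,\vert T_\theta^jI(\bfs)\cap\XXX\vert-\vert I(\bfs)\cap\XXX\vert\,\bigr\vert
\le\vert T_\theta^jI(\bfs)\cap\XXX^+\vert-\vert T_\theta^jI(\bfs)\cap\XXX^-\vert
=\vert T_\theta^jI(\bfs)\cap(\XXX^+\setminus\XXX^-)\vert.
\end{displaymath}
Summing over $j$ and using disjointness of the $T_\theta^jI(\bfs)$ then gives the bound directly. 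Your approach is closer in spirit to the gain/loss bookkeeping of \eqref{eq4.10}--\eqref{eq4.12} in Lemma~\ref{lem4.1}, pushing through the absolute values by exploiting that only the two extreme points $x_1,x_M$ can cause jumps; the paper's $\XXX^\pm$ device packages the same boundary information more globally and yields a termwise bound before summation. Both routes use the disjointness in \eqref{eq4.35} as the essential input, and both give the same constant.
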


\begin{proof}
Write
\begin{displaymath}
\XXX^+=\XXX^+(\JJJ)=\bigcup_{j\in\JJJ}T_\theta^j\XXX
\quad\mbox{and}\quad
\XXX^-=\XXX^-(\JJJ)=\bigcap_{j\in\JJJ}T_\theta^j\XXX.
\end{displaymath}
Clearly
\textcolor{white}{xxxxxxxxxxxxxxxxxxxxxxxxxxxxxx}
\begin{displaymath}
\XXX^-\subset\XXX\subset\XXX^+,
\end{displaymath}
and
\textcolor{white}{xxxxxxxxxxxxxxxxxxxxxxxxxxxxxx}
\begin{equation}\label{eq4.38}
\vert\XXX^+\setminus\XXX^-\vert\le2c_62^{n_1+z_1i_t}.
\end{equation}
There are two possibilities.

If $\vert T_\theta^jI(\bfs)\cap\XXX\vert>\vert I(\bfs)\cap\XXX\vert$, then
\begin{align}\label{eq4.39}
\vert T_\theta^jI(\bfs)\cap\XXX^+\vert
&
\ge\vert T_\theta^jI(\bfs)\cap\XXX\vert
>\vert I(\bfs)\cap\XXX\vert
\nonumber
\\
&
=\vert T_\theta^jI(\bfs)\cap T_\theta^j\XXX\vert
\ge\vert T_\theta^jI(\bfs)\cap\XXX^-\vert.
\end{align}
On the other hand, if $\vert T_\theta^jI(\bfs)\cap\XXX\vert\le\vert I(\bfs)\cap\XXX\vert$, then
\begin{align}\label{eq4.40}
\vert T_\theta^jI(\bfs)\cap\XXX^-\vert
&
\le\vert T_\theta^jI(\bfs)\cap\XXX\vert
\le\vert I(\bfs)\cap\XXX\vert
\nonumber
\\
&
=\vert T_\theta^jI(\bfs)\cap T_\theta^j\XXX\vert
\le\vert T_\theta^jI(\bfs)\cap\XXX^+\vert.
\end{align}
In either case, it follows from \eqref{eq4.39} and \eqref{eq4.40} that
\begin{equation}\label{eq4.41}
\left\vert\vert T_\theta^jI(\bfs)\cap\XXX\vert-\vert I(\bfs)\cap\XXX\vert\right\vert
\le\vert T_\theta^jI(\bfs)\cap\XXX^+\vert-\vert T_\theta^jI(\bfs)\cap\XXX^-\vert.
\end{equation}

Next, since the sets in \eqref{eq4.35} are pairwise disjoint, we have
\begin{align}\label{eq4.42}
&
\sum_{j\in\JJJ}\left(\vert T_\theta^jI(\bfs)\cap\XXX^+\vert-\vert T_\theta^jI(\bfs)\cap\XXX^-\vert\right)
=\sum_{j\in\JJJ}\vert T_\theta^jI(\bfs)\cap(\XXX^+\setminus\XXX^-)\vert
\nonumber
\\
&\quad
=\left\vert\left(\bigcup_{j\in\JJJ}T_\theta^jI(\bfs)\right)\cap(\XXX^+\setminus\XXX^-)\right\vert
\le\vert\XXX^+\setminus\XXX^-\vert.
\end{align}
The estimate \eqref{eq4.36} now follows on combining \eqref{eq4.38}, \eqref{eq4.41} and \eqref{eq4.42}.

The estimate \eqref{eq4.37} can be established in a similar way.
\end{proof}

%
%

\section{Studying Case 1}\label{sec5}

In this section, we consider the situation when Case~1 holds.

Consider any value $h$ given by \eqref{eq4.33}.
It follows from the hypotheses of this case that there exist an integer sequence $\bfs=(s_1,\ldots,s_h)\in\{1,\ldots,z\}^h$
and an integer $s\in\{1,\ldots,z\}$ such that
\begin{equation}\label{eq5.1}
\left\vert\vert I(\bfs,s)\cap\XXX\vert-\frac{\vert I(\bfs)\cap\XXX\vert}{z}\right\vert\ge\frac{\delta M}{z^{h+1}}.
\end{equation}

Using a routine averaging argument, it follows from \eqref{eq4.36} that there exists a subset $\JJJ_1\subset\JJJ$ such that
$\vert\JJJ_1\vert\ge3c_62^{n_1+z_1i_t}/4$ and the inequality
\begin{equation}\label{eq5.2}
\left\vert\vert T_\theta^jI(\bfs)\cap\XXX\vert-\vert I(\bfs)\cap\XXX\vert\right\vert\le8
\end{equation}
holds for every $j\in\JJJ_1$.
Similarly, it follows from \eqref{eq4.37} that there exists a subset $\JJJ_2\subset\JJJ$
such that $\vert\JJJ_2\vert\ge3c_62^{n_1+z_1i_t}/4$ and the inequality
\begin{equation}\label{eq5.3}
\left\vert\vert T_\theta^jI(\bfs,s)\cap\XXX\vert-\vert I(\bfs,s)\cap\XXX\vert\right\vert\le8
\end{equation}
holds for every $j\in\JJJ_2$.
Now let $\JJJ^*=\JJJ_1\cap\JJJ_2$.
Then clearly
\begin{equation}\label{eq5.4}
\vert\JJJ^*\vert\ge\frac{c_62^{n_1+z_1i_t}}{2}.
\end{equation}
Furthermore, the inequalities \eqref{eq5.2} and \eqref{eq5.3} both hold for every $j\in\JJJ^*$.

Combining \eqref{eq5.2} and \eqref{eq5.3}, we see that the inequality
\begin{displaymath}
\left\vert\vert I(\bfs,s)\cap\XXX\vert-\frac{\vert I(\bfs)\cap\XXX\vert}{z}\right\vert
\le\left\vert\vert T_\theta^jI(\bfs,s)\cap\XXX\vert-\frac{\vert T_\theta^jI(\bfs)\cap\XXX\vert}{z}\right\vert+8+\frac{8}{z}
\end{displaymath}
holds for every $j\in\JJJ^*$.
Combining this with \eqref{eq5.1}, we deduce that the inequality
\begin{equation}\label{eq5.5}
\left\vert\vert T_\theta^jI(\bfs,s)\cap\XXX\vert-\frac{\vert T_\theta^jI(\bfs)\cap\XXX\vert}{z}\right\vert
\ge\frac{\delta M}{z^{h+1}}-8-\frac{8}{z}
\ge\frac{3\delta M}{4z^{h+1}}
\end{equation}
holds for every $j\in\JJJ^*$, provided that
\begin{equation}\label{eq5.6}
\delta M\ge64z^{h+1}.
\end{equation}

Motivated by an average version of \eqref{eq3.19} where the parameter $\tau$ runs over an interval $[0,z^{-h}]$, we consider the following.
Each of the disjoint intervals
\begin{displaymath}
T_\theta^jI(\bfs),
\quad
j\in\JJJ^*,
\end{displaymath}
is a subinterval of $[0,1)$ of length $z^{-h}$.
Hence there exist a unique integer sequence $\bfs^{(j)}=(s_1^{(j)},\ldots,s_h^{(j)})\in\{1,\ldots,z\}^h$
and a unique real number $\tau_j\in[0,z^{-h})$ such that
\begin{align}
T_\theta^jI(\bfs)
&
=I(\bfs^{(j)})-\tau_j,
\label{eq5.7}
\\
T_\theta^jI(\bfs,s)
&
=I(\bfs^{(j)},s)-\tau_j,
\nonumber
\end{align}
so that
\textcolor{white}{xxxxxxxxxxxxxxxxxxxxxxxxxxxxxx}
\begin{align}
T_\theta^jI(\bfs)\cap\XXX
&
=(I(\bfs^{(j)})-\tau_j)\cap\XXX,
\label{eq5.8}
\\
T_\theta^jI(\bfs,s)\cap\XXX
&
=(I(\bfs^{(j)},s)-\tau_j)\cap\XXX.
\label{eq5.9}
\end{align}
Note that on the right hand sides of \eqref{eq5.8} and \eqref{eq5.9}, there is a fixed shift~$\tau_j$.
We shall now replace it by a shift~$\tau$, and let $\tau$ run over a short interval centered at~$\tau_j$.

Recall that if
\textcolor{white}{xxxxxxxxxxxxxxxxxxxxxxxxxxxxxx}
\begin{equation}\label{eq5.10}
M>\max\{2c_0^2c_1,c_6\}2^{n_1+kz_1},
\end{equation}
then it follows from Lemma~\ref{lem4.1} that the $M$-element set $\XXX\subset[0,1)$ is $(A,M_1)$-anti-crowded with
\textcolor{white}{xxxxxxxxxxxxxxxxxxxxxxxxxxxxxx}
\begin{equation}\label{eq5.11}
A=\frac{4c_0^2c_1}{c_6}
\quad\mbox{and}\quad
M_1=2c_0^2c_12^{n_1+kz_1},
\end{equation}
so that for every subinterval $I\subset[0,1)$ satisfying the restriction $\vert I\vert\ge M_1^{-1}$, we have
\begin{equation}\label{eq5.12}
\vert I\cap\XXX\vert\le AM\vert I\vert.
\end{equation}

Consider the two intervals $T_\theta^jI(\bfs)$ and $I(\bfs^{(j)})-\tau$, and suppose that $\vert\tau-\tau_j\vert$ is sufficiently small that
the two intervals overlap.
In view of \eqref{eq5.7}, their symmetric difference
\textcolor{white}{xxxxxxxxxxxxxxxxxxxxxxxxxxxxxx}
\begin{displaymath}
T_\theta^jI(\bfs)\,\triangle\,(I(\bfs^{(j)})-\tau)=\III_1\cup\III_2
\end{displaymath}
is a union of two intervals of length $\vert\tau-\tau_j\vert$, and
\begin{equation}\label{eq5.13}
\left\vert\vert T_\theta^jI(\bfs)\cap\XXX\vert-\vert(I(\bfs^{(j)})-\tau)\cap\XXX\vert\right\vert
\le\max\{\vert\III_1\cap\XXX\vert,\vert\III_2\cap\XXX\vert\}.
\end{equation}
For the case $\vert\tau-\tau_j\vert\ge M_1^{-1}$, it follows from \eqref{eq5.12} and \eqref{eq5.13} that
\begin{equation}\label{eq5.14}
\left\vert\vert T_\theta^jI(\bfs)\cap\XXX\vert-\vert(I(\bfs^{(j)})-\tau)\cap\XXX\vert\right\vert
\le AM\vert\tau-\tau_j\vert.
\end{equation}
For the alternative case $\vert\tau-\tau_j\vert<M_1^{-1}$, we may assume, without loss of generality,
that $\vert\III_1\cap\XXX\vert\ge\vert\III_2\cap\XXX\vert$, and let $\III_0$ be an interval
such that $\vert\III_0\vert=M_1^{-1}$ and $\III_1\subset\III_0$.
Then it follows from this and \eqref{eq5.13} that
\begin{equation}\label{eq5.15}
\left\vert\vert T_\theta^jI(\bfs)\cap\XXX\vert-\vert(I(\bfs^{(j)})-\tau)\cap\XXX\vert\right\vert
\le\vert\III_0\cap\XXX\vert.
\end{equation}
Combining \eqref{eq5.12} and \eqref{eq5.15}, we deduce that
\begin{equation}\label{eq5.16}
\left\vert\vert T_\theta^jI(\bfs)\cap\XXX\vert-\vert(I(\bfs^{(j)})-\tau)\cap\XXX\vert\right\vert
\le AMM_1^{-1}.
\end{equation}
Finally it follows from \eqref{eq5.14} and \eqref{eq5.16} that
\begin{equation}\label{eq5.17}
\left\vert\vert T_\theta^jI(\bfs)\cap\XXX\vert-\vert(I(\bfs^{(j)})-\tau)\cap\XXX\vert\right\vert
\le AM\max\{\vert\tau-\tau_j\vert,M_1^{-1}\}.
\end{equation}

Suppose that
\textcolor{white}{xxxxxxxxxxxxxxxxxxxxxxxxxxxxxx}
\begin{equation}\label{eq5.18}
\vert\tau-\tau_j\vert\le\max\left\{\frac{\delta}{8Az^{h+1}},\frac{1}{M_1}\right\}=\frac{\delta}{8Az^{h+1}},
\end{equation}
under the extra restriction
\textcolor{white}{xxxxxxxxxxxxxxxxxxxxxxxxxxxxxx}
\begin{equation}\label{eq5.19}
\delta M_1\ge8Az^{h+1}.
\end{equation}
Then it follows from \eqref{eq5.17}--\eqref{eq5.19} that
\begin{equation}\label{eq5.20}
\left\vert\vert T_\theta^jI(\bfs)\cap\XXX\vert-\vert(I(\bfs^{(j)})-\tau)\cap\XXX\vert\right\vert
\le\frac{\delta M}{8z^{h+1}},
\end{equation}
and analogous argument gives
\begin{equation}\label{eq5.21}
\left\vert\vert T_\theta^jI(\bfs,s)\cap\XXX\vert-\vert(I(\bfs^{(j)},s)-\tau)\cap\XXX\vert\right\vert
\le\frac{\delta M}{8z^{h+1}},
\end{equation}
assuming that the two intervals $T_\theta^jI(\bfs,s)$ and $I(\bfs^{(j)},s)-\tau$ overlap.
Combining \eqref{eq5.20} and \eqref{eq5.21}, we see that the inequality
\begin{align}\label{eq5.22}
&
\left\vert\vert T_\theta^jI(\bfs,s)\cap\XXX\vert-\frac{\vert T_\theta^jI(\bfs)\cap\XXX\vert}{z}\right\vert
\nonumber
\\
&\quad
\le\left\vert\vert(I(\bfs^{(j)},s)-\tau)\cap\XXX\vert-\frac{\vert(I(\bfs^{(j)})-\tau)\cap\XXX\vert}{z}\right\vert
+\frac{\delta M}{8z^{h+1}}+\frac{\delta M}{8z^{h+2}}
\end{align}
holds for every $j\in\JJJ^*$, assuming that \eqref{eq5.18} and \eqref{eq5.19} hold.
This, together with \eqref{eq5.5}, implies that the inequality
\begin{equation}\label{eq5.23}
\left\vert\vert(I(\bfs^{(j)},s)-\tau)\cap\XXX\vert-\frac{\vert(I(\bfs^{(j)})-\tau)\cap\XXX\vert}{z}\right\vert
\ge\frac{3\delta M}{4z^{h+1}}-\frac{\delta M}{4z^{h+1}}
=\frac{\delta M}{2z^{h+1}}
\end{equation}
holds for every $j\in\JJJ^*$, assuming that \eqref{eq5.6} holds also.

For brevity, let
\textcolor{white}{xxxxxxxxxxxxxxxxxxxxxxxxxxxxxx}
\begin{equation}\label{eq5.24}
\omega=\frac{\delta}{8Az^{h+1}}
\end{equation}
denote the upper bound in \eqref{eq5.18}.
We now return to the identity \eqref{eq3.19}.
Averaging $\tau$ over the interval $[0,z_h)$, we obtain the lower bound
\begin{align}\label{eq5.25}
&
z^h\int_0^{z^{-h}}(S_h(\XXX;\tau;p)-S_{h+1}(\XXX;\tau;p))\,\dd\tau
\nonumber
\\
&\quad
\ge\frac{z^h}{z^{p-h-1}}\sum_{j\in\JJJ^*}\sum_{s=1}^z\int_{\tau_j-\omega}^{\tau_j+\omega}
\left\vert\vert(I(\bfs^{(j)},s)-\tau)\cap\XXX\vert-\frac{\vert(I(\bfs^{(j)})-\tau)\cap\XXX\vert}{z}\right\vert^2\dd\tau
\nonumber
\\
&\quad
\ge\frac{2z^{h+1}\vert\JJJ^*\vert\omega}{z^{p-h-1}}\left(\frac{\delta M}{2z^{h+1}}\right)^2
\ge\frac{c_6\delta^3M^2}{32c_0^2c_1Az^{p+1}},
\end{align}
in view of \eqref{eq4.18}, \eqref{eq4.33}, \eqref{eq5.4}, \eqref{eq5.23} and \eqref{eq5.24}.

Recall that we are considering Case~1 here.
There exists a set $\TTT_1$ with cardinality $\vert\TTT_1\vert\ge\eta(k-1)/2$
such that for each of the $\vert\TTT_1\vert$ integers $h$ of the form
\eqref{eq4.33}, we have an estimate of the form \eqref{eq5.25}.
Combining \eqref{eq3.16}, \eqref{eq3.18} and \eqref{eq5.25}, we deduce that
\begin{align}\label{eq5.26}
A^2M^2
&
\ge z^p\int_0^1S_0(\XXX;\tau;p)\,\dd\tau
=z^p\sum_{h=0}^{p-1}z^h\int_0^{z^{-h}}(S_h(\XXX;\tau;p)-S_{h+1}(\XXX;\tau;p))\,\dd\tau
\nonumber
\\
&
\ge\vert\TTT_1\vert\frac{c_6\delta^3M^2}{32c_0^2c_1Az}
\ge\frac{\eta(k-1)c_6\delta^3M^2}{64c_0^2c_1Az},
\end{align}
so that
\textcolor{white}{xxxxxxxxxxxxxxxxxxxxxxxxxxxxxx}
\begin{equation}\label{eq5.27}
k-1\le\frac{64c_0^2c_1A^3z}{c_6\eta\delta^3}.
\end{equation}
Thus choosing $k$ to be an integer greater than the right hand side of \eqref{eq5.27} then ensures that Case~1 is impossible.

%
%

\section{Studying Case 2}\label{sec6}

In this section, we consider the situation when Case~2 holds.

Consider any value $h$ given by \eqref{eq4.34}.
It follows from the hypotheses of this case that for every integer sequence $\bfs=(s_1,\ldots,s_h)\in\{1,\ldots,z\}^h$
and every integer $s\in\{1,\ldots,z\}$,
\textcolor{white}{xxxxxxxxxxxxxxxxxxxxxxxxxxxxxx}
\begin{equation}\label{eq6.1}
\left\vert\vert I(\bfs,s)\cap\XXX\vert-\frac{\vert I(\bfs)\cap\XXX\vert}{z}\right\vert<\frac{\delta M}{z^{h+1}}.
\end{equation}

Let
\textcolor{white}{xxxxxxxxxxxxxxxxxxxxxxxxxxxxxx}
\begin{displaymath}
I(\bfs')=I(s'_1,\ldots,s'_h)
\quad\mbox{and}\quad
I(\bfs'')=I(s''_1,\ldots,s''_h)
\end{displaymath}
be distinct singularity free intervals.

Recall from \eqref{eq4.16} that $z=2^{z_1}$, where $z_1$ is a positive integer.
We may assume that $z_1\ge2$, so that $z$ is a multiple of~$4$.
We can therefore divide the interval $I(\bfs')$ into $4$ equal parts, and denote by $I^*(\bfs')$ the union of the two middle parts.
Likewise we can divide the interval $I(\bfs'')$ into $4$ equal parts, and denote by $I^*(\bfs'')$ the union of the two middle parts.

\begin{lem}\label{lem6.1}
Let $\JJJ\subset\{j\in\Zz:-u\le j\le w\}$ be a subset of $c_62^{n_1+z_1i_t}$ consecutive integers including~$0$.
Suppose that the inequality \eqref{eq6.1} holds for $\bfs',\bfs''\in\{1,\ldots,z\}^h$ and any $s\in\{1,\ldots,z\}$, and the inequality
\begin{equation}\label{eq6.2}
\frac{6r\delta M}{z^h}\le\vert I(\bfs')\cap\XXX\vert-\vert I(\bfs'')\cap\XXX\vert<\frac{6(r+1)\delta M}{z^h}
\end{equation}
holds for some integer $r\ge1$.
Then the set
\begin{equation}\label{eq6.3}
\WWW(\bfs',\bfs'')
=\left(\bigcup_{j\in\JJJ}T_\theta^jI^*(\bfs')\right)\cap\left(\bigcup_{j\in\JJJ}T_\theta^jI^*(\bfs'')\right)
\end{equation}
has $1$-dimensional Lebesgue measure
\begin{equation}\label{eq6.4}
\lambda(\WWW(\bfs',\bfs''))\le\frac{2c_62^{n_1+z_1i_t}z^2}{(z-4)(3r-1)\delta M-4(A+\delta)M}.
\end{equation}
\end{lem}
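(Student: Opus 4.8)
The plan is to use the balance hypothesis \eqref{eq6.1} to force $\XXX$ to be substantially denser, at the scale $z^{-h-1}$ of the sub-blocks $I(\bfs',s)$, inside $I(\bfs')$ than inside $I(\bfs'')$, and then to argue that the overlap set $\WWW(\bfs',\bfs'')$ cannot be large, since every point of it would have to witness both densities at once. First, the density gap: since $\XXX$ is $(\delta;z)$-balanced relative to each singularity free interval (Case~2), every sub-block $I(\bfs',s)$, $s=1,\dots,z$, contains $z^{-1}\vert I(\bfs')\cap\XXX\vert\pm\delta Mz^{-h-1}$ points of $\XXX$, and likewise for $I(\bfs'')$; by the lower bound in \eqref{eq6.2} the two typical block counts $z^{-1}\vert I(\bfs')\cap\XXX\vert$ and $z^{-1}\vert I(\bfs'')\cap\XXX\vert$ differ by at least $6r\delta Mz^{-h-1}$, which dwarfs the balance fluctuation $\delta Mz^{-h-1}$. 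Anti-crowdedness, applicable because $z^{-h}\ge M_1^{-1}$, supplies the complementary upper bounds $\vert I(\bfs')\cap\XXX\vert,\vert I(\bfs'')\cap\XXX\vert\le AMz^{-h}$.

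Second, I would extract a matched-pair structure from the middle-half device. The intervals $T_\theta^jI(\bfs')$, $j\in\JJJ$, are pairwise disjoint (this is \eqref{eq4.35} with $n_0=n_1+z_1i_t$), and since $I^*(\bfs')$ is the middle half of $I(\bfs')$, the pieces $T_\theta^jI^*(\bfs')$ are separated from one another by gaps of length at least $\tfrac12z^{-h}$; the same holds for the family $T_\theta^jI^*(\bfs'')$. As each piece has length exactly $\tfrac12z^{-h}$, no piece of one family can overlap, in positive measure, more than one piece of the other. Consequently $\WWW(\bfs',\bfs'')$ is a disjoint union of at most $\vert\JJJ\vert=c_62^{n_1+z_1i_t}$ intervals $O$, each of the form $O=T_\theta^{j'}I^*(\bfs')\cap T_\theta^{j''}I^*(\bfs'')$ indexed by a matched pair $(j',j'')$.

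Third comes the double count of $\vert\WWW(\bfs',\bfs'')\cap\XXX\vert$. Fix such an interval $O$ of length $\ell$. Since the transportation process is free of splitting over $\JJJ$, the map $T_\theta^{j'}$ is a translation on $I(\bfs')$, so the whole sub-blocks $T_\theta^{j'}I(\bfs',s)$ lying inside $O$ number at least $\ell z^{h+1}-2$, and on each of them $\vert T_\theta^{j'}I(\bfs',s)\cap\XXX\vert\ge z^{-1}\vert I(\bfs')\cap\XXX\vert-\delta Mz^{-h-1}$ up to a discrepancy between $\XXX$ and $T_\theta^{j'}\XXX$; summing over these sub-blocks gives a lower bound for $\vert O\cap\XXX\vert$. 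Working instead inside $I(\bfs'')$ with $T_\theta^{j''}$ and covering $O$ by whole sub-blocks of $T_\theta^{j''}I(\bfs'')$ yields, symmetrically, an upper bound for $\vert O\cap\XXX\vert$. Summing both over the matched pairs, so that $\sum_O\ell=\lambda(\WWW(\bfs',\bfs''))$, bounding the number of pairs by $\vert\JJJ\vert$, and controlling the accumulated sub-block discrepancies by a quantity of order $z\vert\JJJ\vert$ via Lemma~\ref{lem4.3} applied to the sub-blocks $I(\bfs',s)$ and $I(\bfs'',s)$, one subtracts the two resulting estimates. On the left the density gap $6r\delta Mz^{-h-1}$ of Step~1 competes only against the balance error $2\delta Mz^{-h-1}$, so that, after accounting for the at most four partial sub-blocks per pair, inserting the anti-crowdedness bounds $z^{-1}\vert I(\bfs')\cap\XXX\vert,z^{-1}\vert I(\bfs'')\cap\XXX\vert\le AMz^{-h-1}$ together with $\delta\le A$, and clearing the common factor $z^{h+1}$, a quantity of the shape $(z-4)(3r-1)\delta M-4(A+\delta)M$ survives in the denominator, while the numerator $2c_62^{n_1+z_1i_t}z^2$ collects the pair count $\vert\JJJ\vert$ with the factor $z$ coming from summing a Lemma~\ref{lem4.3} estimate over the $z$ sub-block labels and the remaining scaling factors. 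Solving for $\lambda(\WWW(\bfs',\bfs''))$ then gives \eqref{eq6.4}.

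The main obstacle is the bookkeeping in the third step: keeping the partial sub-blocks at the two ends of each $O$ and the discrepancies between $\XXX$ and its $T_\theta$-iterates under control, so that these errors accumulate only linearly in $\vert\JJJ\vert$ (with the unavoidable extra factor $z$ from summing a Lemma~\ref{lem4.3} estimate over the $z$ sub-block labels), and checking at the outset that all the intervals involved --- $I^*(\bfs')$, the sub-blocks, and their translated copies inside the singularity free parent intervals --- have length at least $M_1^{-1}$, so that anti-crowdedness applies, and lie inside singularity free intervals, so that balance applies. The middle-half device is exactly what delivers the matched-pair structure of Step~2 and keeps every overlap interval $O$ comfortably inside both parent intervals, which is what makes the pull-backs by the translations $T_\theta^{j'}$ and $T_\theta^{j''}$ legitimate.
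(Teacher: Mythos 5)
Your proposal follows the same three-step plan as the paper: balance plus anti-crowdedness give the sub-block density gap, the middle-half device gives a matched-pair structure for the overlap set $\WWW(\bfs',\bfs'')$, and a two-sided count of hitting points inside the overlap, with Lemma~\ref{lem4.3} controlling the $T_\theta$-iterate discrepancies, closes the argument. So the overall strategy is the paper's.

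The one place where you depart from the paper, and where I would flag a gap, is in what region the double count is carried out over. You count whole sub-blocks that sit \emph{inside} $O=T_\theta^{j'}I^*(\bfs')\cap T_\theta^{j''}I^*(\bfs'')$, getting ``at least $\ell z^{h+1}-2$'' of them, which is vacuous (negative) whenever $\ell<2z^{-h-1}$; you then lean on the identity $\sum_O\ell=\lambda(\WWW)$ to recover the total. The paper instead observes that nonempty intersection of the middle halves forces the \emph{full} intervals to overlap in measure at least $\tfrac{1}{2}z^{-h}$, and does the two-sided count there, using the sets $\SSS'_-=\{s:T_\theta^{j'}I(\bfs',s)\subset T_\theta^{j'}I(\bfs')\cap T_\theta^{j''}I(\bfs'')\}$ and $\SSS''_+$. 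This gives a \emph{uniform} per-pair lower bound $|\SSS'_-|\ge z/2-2$ and $|\SSS''_+|\le|\SSS'_-|+4$, hence a uniform lower bound
\[
\left(\tfrac{z}{2}-2\right)\frac{(6r-2)\delta M}{z^{h+1}}-\frac{4(A+\delta)M}{z^{h+1}}
=\frac{(z-4)(3r-1)\delta M-4(A+\delta)M}{z^{h+1}}
\]
per matched pair, which is exactly the denominator in \eqref{eq6.4}. Then the pair count $|\JJJ(\bfs',\bfs'')|$ is bounded via Lemma~\ref{lem4.3}, and $\lambda(\WWW)\le|\JJJ(\bfs',\bfs'')|\cdot\tfrac{1}{2}z^{-h}$ converts this to \eqref{eq6.4}. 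Your accounting can be pushed to the same end --- one can check that the extra per-pair edge term $|\JJJ(\bfs',\bfs'')|\cdot 4(A+\delta)M z^{-h-1}$ in your version is compatible with the paper's $\lambda(\WWW)\cdot 8(A+\delta)M/z$ precisely because $\lambda(\WWW)\le|\JJJ(\bfs',\bfs'')|\cdot\tfrac{1}{2}z^{-h}$ --- but you never invoke that inequality, and without it the ``quantity of the shape $(z-4)(3r-1)\delta M-4(A+\delta)M$'' does not actually emerge. You should make the move to the full interval overlap explicit (it is the real payoff of the middle-half device); the statement $\lambda\bigl(T_\theta^{j'}I(\bfs')\cap T_\theta^{j''}I(\bfs'')\bigr)\ge\tfrac{1}{2}z^{-h}$ is doing essential work in producing the bound in the exact form \eqref{eq6.4}.
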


\begin{proof}
Suppose that for $j',j''\in\JJJ$, the intersection
\begin{equation}\label{eq6.5}
T_\theta^{j'}I^*(\bfs')\cap T_\theta^{j''}I^*(\bfs'')\ne\emptyset.
\end{equation}
Then it is easy to see that
\textcolor{white}{xxxxxxxxxxxxxxxxxxxxxxxxxxxxxx}
\begin{equation}\label{eq6.6}
\lambda(T_\theta^{j'}I(\bfs')\cap T_\theta^{j''}I(\bfs''))\ge\frac{1}{2z^h}.
\end{equation}
Let
\textcolor{white}{xxxxxxxxxxxxxxxxxxxxxxxxxxxxxx}
\begin{align}
\SSS'_-
&
=\{s\in\{1,\ldots,z\}:T_\theta^{j'}I(\bfs',s)\subset T_\theta^{j'}I(\bfs')\cap T_\theta^{j''}I(\bfs'')\},
\nonumber
\\
\SSS''_+
&
=\{s\in\{1,\ldots,z\}:T_\theta^{j''}I(\bfs'',s)\cap(T_\theta^{j'}I(\bfs')\cap T_\theta^{j''}I(\bfs''))\ne\emptyset\}.
\nonumber
\end{align}
It then follows from \eqref{eq6.6} that
\begin{equation}\label{eq6.7}
\frac{z}{2}-2\le\vert\SSS'_-\vert\le\vert\SSS''_+\vert\le\vert\SSS'_-\vert+4.
\end{equation}
On the other hand,
\begin{displaymath}
\bigcup_{s\in\SSS'_-}\left(T_\theta^{j'}I(\bfs',s)\cap\XXX\right)
\subset T_\theta^{j'}I(\bfs')\cap T_\theta^{j''}I(\bfs'')\cap\XXX
\subset\bigcup_{s\in\SSS''_+}\left(T_\theta^{j''}I(\bfs'',s)\cap\XXX\right).
\end{displaymath}
Using this and the triangle inequality, we deduce that
\begin{align}
&
\sum_{s\in\SSS'_-}\left(\vert I(\bfs',s)\cap\XXX\vert
-\left\vert\left\vert T_\theta^{j'}I(\bfs',s)\cap\XXX\right\vert-\vert I(\bfs',s)\cap\XXX\vert\right\vert\right)
\nonumber
\\
&\quad
\le\left\vert T_\theta^{j'}I(\bfs')\cap T_\theta^{j''}I(\bfs'')\cap\XXX\right\vert
\nonumber
\\
&\quad
\le\sum_{s\in\SSS''_+}\left(\vert I(\bfs'',s)\cap\XXX\vert
+\left\vert\left\vert T_\theta^{j''}I(\bfs'',s)\cap\XXX\right\vert-\vert I(\bfs'',s)\cap\XXX\vert\right\vert\right),
\nonumber
\end{align}
and so
\textcolor{white}{xxxxxxxxxxxxxxxxxxxxxxxxxxxxxx}
\begin{align}\label{eq6.8}
&
\sum_{s\in\SSS'_-}\vert I(\bfs',s)\cap\XXX\vert
-\sum_{s\in\SSS''_+}\vert I(\bfs'',s)\cap\XXX\vert
\nonumber
\\
&\quad
\le\sum_{s\in\SSS'_-}\left\vert\left\vert T_\theta^{j'}I(\bfs',s)\cap\XXX\right\vert-\vert I(\bfs',s)\cap\XXX\vert\right\vert
\nonumber
\\
&\quad\qquad
+\sum_{s\in\SSS''_+}\left\vert\left\vert T_\theta^{j''}I(\bfs'',s)\cap\XXX\right\vert-\vert I(\bfs'',s)\cap\XXX\vert\right\vert.
\end{align}
It clearly follows form \eqref{eq6.1} and the last inequality in \eqref{eq6.7} that
\begin{align}\label{eq6.9}
&
\sum_{s\in\SSS'_-}\vert I(\bfs',s)\cap\XXX\vert
-\sum_{s\in\SSS''_+}\vert I(\bfs'',s)\cap\XXX\vert
\nonumber
\\
&\quad
\ge\vert\SSS'_-\vert\left(\frac{\vert I(\bfs')\cap\XXX\vert}{z}-\frac{\delta M}{z^{h+1}}\right)
-\vert\SSS''_+\vert\left(\frac{\vert I(\bfs'')\cap\XXX\vert}{z}+\frac{\delta M}{z^{h+1}}\right)
\nonumber
\\
&\quad
\ge\vert\SSS'_-\vert\left(\frac{\vert I(\bfs')\cap\XXX\vert}{z}-\frac{\vert I(\bfs'')\cap\XXX\vert}{z}-\frac{2\delta M}{z^{h+1}}\right)
-\frac{4\vert I(\bfs'')\cap\XXX\vert}{z}-\frac{4\delta M}{z^{h+1}}.
\end{align}
Consider the last line in \eqref{eq6.9}.
For the first term, we apply the first inequality in \eqref{eq6.7} to $\vert\SSS'_-\vert$
and the first inequality in \eqref{eq6.2} to the first two terms inside the brackets.
For the second term, we apply $(A;M_1)$-anti-crowdedness of~$\XXX$.
Then
\begin{equation}\label{eq6.10}
\sum_{s\in\SSS'_-}\vert I(\bfs',s)\cap\XXX\vert-\sum_{s\in\SSS''_+}\vert I(\bfs'',s)\cap\XXX\vert
\ge\left(\frac{z}{2}-2\right)\frac{(6r-2)\delta M}{z^{h+1}}-\frac{4(A+\delta)M}{z^{h+1}}.
\end{equation}
Combining \eqref{eq6.8} and \eqref{eq6.10}, we have
\begin{align}
&\sum_{s\in\SSS'_-}\left\vert\left\vert T_\theta^{j'}I(\bfs',s)\cap\XXX\right\vert-\vert I(\bfs',s)\cap\XXX\vert\right\vert
+\sum_{s\in\SSS''_+}\left\vert\left\vert T_\theta^{j''}I(\bfs'',s)\cap\XXX\right\vert-\vert I(\bfs'',s)\cap\XXX\vert\right\vert
\nonumber
\\
&\quad
\ge\frac{(z-4)(3r-1)\delta M-4(A+\delta)M}{z^{h+1}}.
\nonumber
\end{align}
Note next that if \eqref{eq6.5} holds, then for any $j\in\JJJ$,
\begin{displaymath}
\begin{array}{ll}
T_\theta^{j'}I^*(\bfs')\cap T_\theta^jI^*(\bfs'')=\emptyset,
&\mbox{if $j\ne j''$},
\vspace{2pt}\\
T_\theta^jI^*(\bfs')\cap T_\theta^{j''}I^*(\bfs'')=\emptyset,
&\mbox{if $j\ne j'$}.
\end{array}
\end{displaymath}
Now let
\textcolor{white}{xxxxxxxxxxxxxxxxxxxxxxxxxxxxxx}
\begin{displaymath}
\JJJ(\bfs',\bfs'')=\{(j',j'')\in\JJJ\times\JJJ:T_\theta^{j'}I^*(\bfs')\cap T_\theta^{j''}I^*(\bfs'')\ne\emptyset\}.
\end{displaymath}
Then for each $j'\in\JJJ$, there is at most one $j''\in\JJJ$ for which $(j',j'')\in\JJJ(\bfs',\bfs'')$,
and for each $j''\in\JJJ$, there is at most one $j'\in\JJJ$ for which $(j',j'')\in\JJJ(\bfs',\bfs'')$.
It follows that
\textcolor{white}{xxxxxxxxxxxxxxxxxxxxxxxxxxxxxx}
\begin{align}
&
\frac{(z-4)(3r-1)\delta M-4(A+\delta)M}{z^{h+1}}\vert\JJJ(\bfs',\bfs'')\vert
\nonumber
\\
&\quad
\le\sum_{j'\in\JJJ}\sum_{s\in\SSS'_-}\left\vert\left\vert T_\theta^{j'}I(\bfs',s)\cap\XXX\right\vert-\vert I(\bfs',s)\cap\XXX\vert\right\vert
\nonumber
\\
&\quad\qquad
+\sum_{j''\in\JJJ}\sum_{s\in\SSS''_+}\left\vert\left\vert T_\theta^{j''}I(\bfs'',s)\cap\XXX\right\vert-\vert I(\bfs'',s)\cap\XXX\vert\right\vert.
\nonumber
\end{align}
Combining this with \eqref{eq4.37}, we conclude that
\begin{displaymath}
\vert\JJJ(\bfs',\bfs'')\vert\le\frac{4c_62^{n_1+z_1i_t}z^{h+2}}{(z-4)(3r-1)\delta M-4(A+\delta)M}.
\end{displaymath}
The inequality \eqref{eq6.4} now follows on combining this with the trivial observation that
\begin{displaymath}
\lambda(T_\theta^{j'}I^*(\bfs')\cap T_\theta^{j''}I^*(\bfs''))\le\frac{1}{2z^h}
\end{displaymath}
whenever \eqref{eq6.5} holds.
\end{proof}

Recall that the mapping $\psi:\EEE\to[0,1)$ takes any edge $E$ of $\PPP$, apart possibly from the two endpoints,
to a subinterval $\psi(E)$ of $[0,1)$.
We now consider the closed interval $\overline{\psi(E)}\subset[0,1)$ comprising the interval $\psi(E)$ together with its two endpoints,
and consider the set
\textcolor{white}{xxxxxxxxxxxxxxxxxxxxxxxxxxxxxx}
\begin{displaymath}
\frakI(E)=\{I(\bfs)\subset\overline{\psi(E)}:\bfs\in\{1,\ldots,z\}^h\}
\end{displaymath}
of all special intervals of length $z^{-h}$ in $\overline{\psi(E)}$.

\begin{lem}\label{lem6.2}
For any edge $E$ of~$\PPP$, at least one of the following two possibilities must hold:

\emph{(i)}
There exist two adjacent intervals $I(\bfs^*),I(\bfs^{**})\in\frakI(E)$ such that
\begin{equation}\label{eq6.11}
\vert I(\bfs^*)\cap\XXX\vert-\vert I(\bfs^{**})\cap\XXX\vert\ge\frac{3\delta M}{z^h}.
\end{equation}

\emph{(ii)}
There exists a constant $c_8=c_8(\PPP)>0$, depending at most on~$\PPP$, such that for any two intervals $I(\bfs'),I(\bfs'')\in\frakI(E)$,
\begin{equation}\label{eq6.12}
\vert\vert I(\bfs')\cap\XXX\vert-\vert I(\bfs'')\cap\XXX\vert\vert\le\frac{c_8\delta M}{z^h}.
\end{equation}
\end{lem}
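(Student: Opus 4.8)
The plan is to prove the contrapositive: assuming that (i) fails, I would produce a constant $c_8=c_8(\PPP)$ for which (ii) holds. So suppose every pair of adjacent intervals of $\frakI(E)$ has $\XXX$-counts differing by less than $3\delta M/z^h$, let $m$ and $m+D$ be the least and greatest values of $\vert I(\bfs)\cap\XXX\vert$ over $I(\bfs)\in\frakI(E)$, and assume $D\ge6\delta M/z^h$ (otherwise (ii) holds with $c_8=6$). Fix the integer $r\ge1$ with $6r\delta M/z^h\le D<6(r+1)\delta M/z^h$; the whole task is to bound $r$ by a constant depending only on~$\PPP$.

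First I would extract a \emph{ladder} of rungs. Since $\overline{\psi(E)}$ is an interval, $\frakI(E)$ is a block of consecutive level-$h$ special intervals, so the $\XXX$-counts along $\frakI(E)$, read in order on the sub-block joining an interval that realises the minimum to one that realises the maximum, form a finite sequence of non-negative integers with consecutive gaps strictly below $3\delta M/z^h$ that climbs from $m$ to at least $m+6r\delta M/z^h$. Because each step is strictly smaller than the half-width $3\delta M/z^h$, this sequence cannot leap over any window of width $6\delta M/z^h$, so for every $s=0,1,\dots,r$ there is $I(\bfs_s)\in\frakI(E)$ with $\bigl\vert\,\vert I(\bfs_s)\cap\XXX\vert-(m+6s\delta M/z^h)\,\bigr\vert\le3\delta M/z^h$. (The $O_\PPP(1)$ intervals of $\frakI(E)$ that contain a singularity of $\psi$ are simply discarded, one representative per rung being enough.) Consequently $\vert\,\vert I(\bfs_s)\cap\XXX\vert-\vert I(\bfs_{s'})\cap\XXX\vert\,\vert\ge6\delta M/z^h$ whenever $\vert s-s'\vert\ge2$, which is exactly hypothesis~\eqref{eq6.2} of Lemma~\ref{lem6.1} for the pair $(\bfs_s,\bfs_{s'})$ (with integer parameter at least~$1$); and the standing Case~2 hypothesis~\eqref{eq6.1} holds for these singularity-free sequences.

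The heart of the argument is a packing obstruction obtained by flowing the rungs forward. Each $I(\bfs_s)$ is a singularity-free level-$h$ special interval whose $H$-image has length $(c_0^2 2^{n_0})^{-1}$ with $n_0=n_1+z_1i_t$, exactly as in the discussion preceding Lemma~\ref{lem4.3}; since $\theta\not\in\Omega(n_0;c_0)$ by~\eqref{eq4.32}, the transportation process of Lemma~\ref{lem2.3} supplies for each~$s$ a window of $c_6 2^{n_0}$ consecutive integers (after shrinking $c_6$ so that $2c_6<c_5$, and working on the forward or the backward side according to which of $u_s,w_s$ is the larger) along which the translates $T_\theta^j I(\bfs_s)$, hence $T_\theta^j I^*(\bfs_s)$, are pairwise disjoint. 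Passing to a subfamily of at least $r/8$ rungs, no two of whose indices are consecutive, that share one such window~$\JJJ$, the flow-out $W_s:=\bigcup_{j\in\JJJ}T_\theta^j I^*(\bfs_s)\subset[0,1)$ becomes a disjoint union with
\[
\lambda(W_s)=\tfrac12\,\vert\JJJ\vert\,z^{-h}=\frac{c_6}{2c_0^2 c_1}=:\mu,
\]
a positive constant independent of~$N$. At the same time Lemma~\ref{lem6.1}, applied to each retained pair $(\bfs_s,\bfs_{s'})$, bounds $\lambda(W_s\cap W_{s'})\le\lambda(\WWW(\bfs_s,\bfs_{s'}))$ by a quantity~$\sigma$ which, on using $z^h<M/(2z^2)$ (a consequence of~\eqref{eq4.18} together with the lower bound imposed on~$M$), satisfies $\sigma<2c_6(\delta c_0^2 c_1 z)^{-1}$; in particular $\sigma<\tfrac12\mu^2$ once $z$ has been chosen large in terms of $\PPP$, $c_0$ and~$\delta$, which the freedom in the choice of $z$ in Section~\ref{sec4} allows.

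Finally I would estimate $\int_0^1\bigl(\sum_s\mathbf{1}_{W_s}\bigr)^2\dd\tau$ over the retained subfamily in two ways. By Cauchy--Schwarz it is at least $\bigl(\sum_s\lambda(W_s)\bigr)^2\ge(\tfrac{r}{8}\mu)^2$, while expanding the square and using $\lambda(W_s\cap W_{s'})\le\sigma$ for all (necessarily non-consecutive) retained pairs it is at most $\tfrac{r}{8}\mu+(\tfrac{r}{8})^2\sigma$. Since $\sigma<\tfrac12\mu^2$, the two estimates force $r$ to be at most an absolute multiple of $1/\mu=2c_0^2 c_1/c_6$, hence $r\le c_8(\PPP)$ for a suitable constant (using $c_2(\PPP)\le c_1\le c_3(\PPP)$ from~\eqref{eq2.4} to absorb the dependence on~$c_1$); therefore $D<6(r+1)\delta M/z^h\le c_8\delta M/z^h$, which is~(ii). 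I expect the middle step to be the main obstacle: converting a large spread of $\XXX$-counts on a single edge into a genuine overlap deficit relies on Lemma~\ref{lem6.1}, and hence on $\theta$ avoiding small neighbourhoods of saddle-connection directions --- without that, near-periodic returns could refocus the flow-outs and destroy the packing bound --- as well as on the bookkeeping needed to secure a common transportation window and the $N$-independent lower bound $\lambda(W_s)\ge\mu$.
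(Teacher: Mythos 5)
Your proposal is correct and follows the same overall skeleton as the paper --- extract a chain of level-$h$ intervals whose $\XXX$-counts jump by at least $6\delta M/z^{h}$, flow each one out along a transportation window of $c_6 2^{n_1+z_1 i_t}$ steps to get sets of measure $\mu=c_6/(2c_0^2 c_1)$ independent of~$N$, and use Lemma~\ref{lem6.1} to control pairwise overlaps --- but the packing step at the heart is genuinely different. The paper runs a first-moment (Bonferroni) bound
$1\ge q\mu-\Upsilon$
and must exploit the $r$-dependence in Lemma~\ref{lem6.1} (the harmonic sum in \eqref{eq6.18}) to show $\Upsilon\lesssim q(1+\log q)\sigma_1\ll q\mu$, which in turn forces the extra parameter condition $\delta M\ge 16z^{h+1}(1+\log q)$ of \eqref{eq7.2} and a small bootstrap to remove the $\log q$. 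You instead compare the second moment $\int_0^1\bigl(\sum_s\mathbf 1_{W_s}\bigr)^2\dd\tau$ against $(\sum_s\lambda(W_s))^2$ via Cauchy--Schwarz, which gives the quadratic inequality $Q(\mu^2-\sigma)\le\mu$ directly from a \emph{uniform} overlap bound $\sigma<\tfrac12\mu^2$. This avoids the harmonic sum altogether, so you only need the $r=1$ case of Lemma~\ref{lem6.1}; this is a real simplification --- a naive first-moment Bonferroni with a uniform $\sigma$ would give $1\ge Q\mu-\tfrac12 Q^2\sigma$, which does \emph{not} bound $Q$, precisely because the quadratic term eventually makes the right-hand side negative, so the paper's harmonic refinement is genuinely needed on the first-moment route but not on yours. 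Minor points to tighten: the common transportation window $\JJJ$ shared by a positive fraction of the rungs requires a pigeonhole over forward/backward halves (which you hint at via ``shrinking $c_6$'' and choosing the larger of $u_s,w_s$, and which also appears implicitly in the paper's $\Upsilon$); the constraint you phrase as ``$z$ chosen large in terms of $\PPP,c_0,\delta$'' unwinds, using $\delta\sim\eps^2$ and $c_0\sim 1/\eps$, to $z\gtrsim 1/\eps^4$, which is the same order imposed by \eqref{eq7.7}--\eqref{eq7.8}, so it is available; and your final $c_8$, like the paper's $c_9$ in \eqref{eq7.3}, depends on $c_0$ and hence on $\eps$, so it is really $c_8(\PPP;\eps)$ --- a shared imprecision with the lemma's statement rather than a new gap. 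Your IVT-style rung extraction from the negation of~(i) is logically equivalent to, and slightly tidier than, the paper's two-step ``bound $q$, then derive a contradiction from the negation of both (i) and (ii).''
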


\begin{proof}
Let $I(\bfs^{(1)}),I(\bfs^{(2)}),\ldots,I(\bfs^{(q)})\in\frakI(E)$ be a maximal chain of intervals such that
\textcolor{white}{xxxxxxxxxxxxxxxxxxxxxxxxxxxxxx}
\begin{equation}\label{eq6.13}
\vert I(\bfs^{(\rho)})\cap\XXX\vert-\vert I(\bfs^{(\rho+1)})\cap\XXX\vert\ge\frac{6\delta M}{z^h},
\quad
\rho=1,\ldots,q-1.
\end{equation}
If $q=1$, then \eqref{eq6.12} holds with $c_8=6$.
Thus we assume that $q\ge2$.

We have the trivial bounds
\begin{equation}\label{eq6.14}
1\ge\lambda\left(\bigcup_{\rho=1}^q\bigcup_{j\in\JJJ}T_\theta^jI^*(\bfs^{(\rho)})\right)
\ge\sum_{\rho=1}^q\sum_{j\in\JJJ}\lambda\left(T_\theta^jI^*(\bfs^{(\rho)})\right)-\Upsilon,
\end{equation}
where $\JJJ\subset\{j\in\Zz:-u\le j\le w\}$ is a subset of $c_62^{n_1+z_1i_t}$ consecutive integers as in Lemma~\ref{lem6.1}, and
\textcolor{white}{xxxxxxxxxxxxxxxxxxxxxxxxxxxxxx}
\begin{equation}\label{eq6.15}
\Upsilon=\sum_{\rho_1=1}^{q-1}\sum_{\rho_2=\rho_1+1}^q\lambda\left(\WWW(\bfs^{(\rho_1)},\bfs^{(\rho_2)})\right),
\end{equation}
using the notation in \eqref{eq6.3}.

Assume that
\textcolor{white}{xxxxxxxxxxxxxxxxxxxxxxxxxxxxxx}
\begin{equation}\label{eq6.16}
2(z-4)\delta-4(A+\delta)\ge\frac{\delta z}{2}.
\end{equation}
Then
\begin{equation}\label{eq6.17}
(z-4)(3r-1)\delta M-4(A+\delta)M
\ge2r(z-4)\delta M-4r(A+\delta)M
\ge\frac{r\delta Mz}{2}.
\end{equation}
Consider a summand in \eqref{eq6.15} with $1\le \rho_1<\rho_2\le q$.
It follows from \eqref{eq6.13} that
\begin{displaymath}
\vert I(\bfs^{(\rho_1)})\cap\XXX\vert-\vert I(\bfs^{(\rho_2)})\cap\XXX\vert\ge\frac{6(\rho_2-\rho_1)\delta M}{z^h},
\end{displaymath}
and so we can use Lemma~\ref{lem6.1} with $r=\rho_2-\rho_1$.
Combining this with \eqref{eq6.17}, we conclude that
\textcolor{white}{xxxxxxxxxxxxxxxxxxxxxxxxxxxxxx}
\begin{equation}\label{eq6.18}
\Upsilon
\le\frac{4c_62^{n_1+z_1i_t}z}{\delta M}\sum_{\rho=1}^q\sum_{r=1}^q\frac{1}{r}
\le\frac{4qc_62^{n_1+z_1i_t}z(1+\log q)}{\delta M}.
\end{equation}
On the other hand, it is easy to see that
\begin{equation}\label{eq6.19}
\sum_{\rho=1}^q\sum_{j\in\JJJ}\lambda\left(T_\theta^jI^*(\bfs^{(\rho)})\right)
=\frac{qc_62^{n_1+z_1i_t}}{2z^h}.
\end{equation}
Combining \eqref{eq6.14}, \eqref{eq6.18} and \eqref{eq6.19}, we obtain estimate
\begin{equation}\label{eq6.20}
q\le\frac{1}{c_62^{n_1+z_1i_t}}\left(\frac{1}{2z^h}-\frac{4z(1+\log q)}{\delta M}\right)^{-1}.
\end{equation}
We shall show later that \eqref{eq6.20} implies $q\le c_9$ for some constant $c_9=c_9(\PPP)>0$ which depends at most on~$\PPP$.

Suppose that neither assertion (i) nor assertion (ii) is valid.
Then for any two adjacent intervals $I(\bfs^*),I(\bfs^{**})\in\frakI(E)$,
\begin{equation}\label{eq6.21}
\vert\vert I(\bfs^*)\cap\XXX\vert-\vert I(\bfs^{**})\cap\XXX\vert\vert<\frac{3\delta M}{z^h}.
\end{equation}
Furthermore, there exist two intervals $I(\bfs'),I(\bfs'')\in\frakI(E)$ such that
\begin{displaymath}
\vert\vert I(\bfs')\cap\XXX\vert-\vert I(\bfs'')\cap\XXX\vert\vert>\frac{c_8\delta M}{z^h}.
\end{displaymath}
Without loss of generality, suppose that $\vert I(\bfs')\cap\XXX\vert>\vert I(\bfs'')\cap\XXX\vert$.
Then the interval $[\vert I(\bfs'')\cap\XXX\vert,\vert I(\bfs')\cap\XXX\vert]$ has length
\begin{displaymath}
\length([\vert I(\bfs'')\cap\XXX\vert,\vert I(\bfs')\cap\XXX\vert])>\frac{c_8\delta M}{z^h},
\end{displaymath}
and contains the subintervals
\begin{equation}\label{eq6.22}
\AAA_\rho=\left(\vert I(\bfs')\cap\XXX\vert-\frac{3\rho\delta M}{z_h},\vert I(\bfs')\cap\XXX\vert-\frac{3(\rho-1)\delta M}{z_h}\right),
\quad
\rho=1,\ldots,\left[\frac{c_8}{3}\right].
\end{equation}
We now go from $I(\bfs')$ to $I(\bfs'')$.
More precisely, consider the collection
\begin{displaymath}
\BBB(\bfs',\bfs'')=\{I(\bfs)\in\frakI(E):\mbox{$I(\bfs)$ lies between $I(\bfs')$ and $I(\bfs'')$}\}
\end{displaymath}
of special intervals of length $z^{-h}$ that lie between $I(\bfs')$ and $I(\bfs'')$.

Note that each interval $\AAA_\rho$ in \eqref{eq6.22} has length $3\delta M/z^h$.
The condition \eqref{eq6.21} now ensures that $\AAA_\rho$ must contain a term of the form $\vert I(\bfs)\cap\XXX\vert$
for some $I(\bfs)\in\BBB(\bfs',\bfs'')$.
Suppose that
\textcolor{white}{xxxxxxxxxxxxxxxxxxxxxxxxxxxxxx}
\begin{displaymath}
\vert I(\bfs^{(\rho)})\cap\XXX\vert\in\AAA_{3\rho},
\quad
\rho=1,\ldots,\left[\frac{c_8}{9}\right].
\end{displaymath}
Then clearly
\begin{displaymath}
\vert I(\bfs^{(\rho)})\cap\XXX\vert-\vert I(\bfs^{(\rho+1)})\cap\XXX\vert\ge\frac{6\delta M}{z^h},
\quad
\rho=1,\ldots,\left[\frac{c_8}{9}\right]-1.
\end{displaymath}
This contradicts the maximality of $q$ if we choose $c_8=9(c_9+2)$.

The proof of the lemma is now complete, subject to verifying the condition \eqref{eq6.16}
and showing that  \eqref{eq6.20} implies $q\le c_9$ for some constant $c_9=c_9(\PPP)>0$.
\end{proof}

In view of Lemma~\ref{lem6.2}, we have two subcases.

\begin{case2a}
There exist an edge $E$ of $\PPP$ and two adjacent intervals $I(\bfs^*),I(\bfs^{**})\in\frakI(E)$ such that the inequality \eqref{eq6.11} holds.
\end{case2a}

\begin{case2b}
For every edge $E$ of $\PPP$ and for any two intervals $I(\bfs'),I(\bfs'')\in\frakI(E)$, the inequality \eqref{eq6.12} holds.
\end{case2b}

Let us first investigate Case~2A.
In view of symmetry, we may assume, without loss of generality, that $I(\bfs^*)$ is the left neighbor of $I(\bfs^{**})$.
Recall that each of
\begin{displaymath}
I(\bfs^*)=\bigcup_{s=1}^zI(\bfs^*,s)
\quad\mbox{and}\quad
I(\bfs^{**})=\bigcup_{s=1}^zI(\bfs^{**},s)
\end{displaymath}
is a union of $z$ subintervals of length $z^{-h-1}$.
Consider the interval
\begin{displaymath}
I^\circ=I(\bfs^*,z)\cup\left(\bigcup_{s=1}^{z-1}I(\bfs^{**},s)\right),
\end{displaymath}
made up of the last subinterval in $I(\bfs^*)$ and every subinterval apart from the last in $I(\bfs^{**})$.
For convenience, write
\begin{displaymath}
I^\circ(1)=I(\bfs^*,z)
\quad\mbox{and}\quad
I^\circ(s)=I(\bfs^{**},s-1),
\quad
s=2,\ldots,z.
\end{displaymath}

\begin{lem}\label{lem6.3}
There exists an integer $s=1,\ldots,z$ such that
\begin{equation}\label{eq6.23}
\left\vert\vert I^\circ(s)\cap\XXX\vert-\frac{\vert I^\circ\cap\XXX\vert}{z}\right\vert>\frac{\delta M}{z^{h+1}}.
\end{equation}
\end{lem}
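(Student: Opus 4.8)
The plan is to argue by contradiction. Recall that we are in Case~2, so $\XXX$ is $(\delta;z)$-balanced relative to every singularity-free special interval of length $z^{-h}$; in particular it is $(\delta;z)$-balanced relative to $I(\bfs^*)$ and relative to $I(\bfs^{**})$ (of the at most two intervals in $\frakI(E)$ that meet an endpoint of $\psi(E)$, none need be chosen in the pair supplied by Lemma~\ref{lem6.2}, so we may take both $I(\bfs^*)$ and $I(\bfs^{**})$ to be singularity free). Abbreviate $a=\vert I(\bfs^*)\cap\XXX\vert$, $b=\vert I(\bfs^{**})\cap\XXX\vert$ and $C=\vert I^\circ\cap\XXX\vert$, so that $\vert a-b\vert\ge3\delta M/z^h$ by \eqref{eq6.11}. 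Suppose, contrary to the assertion, that
\[
\left\vert\vert I^\circ(s)\cap\XXX\vert-\frac{C}{z}\right\vert\le\frac{\delta M}{z^{h+1}},\qquad s=1,\ldots,z;
\]
I shall deduce $\vert a-b\vert<3\delta M/z^h$, a contradiction.

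The key observation is that $I^\circ(1)=I(\bfs^*,z)$ is one of the subintervals $I(\bfs^*,1),\ldots,I(\bfs^*,z)$ of $I(\bfs^*)$, whereas $I^\circ(2),\ldots,I^\circ(z)$ are exactly the subintervals $I(\bfs^{**},1),\ldots,I(\bfs^{**},z-1)$ of $I(\bfs^{**})$. First, balancedness relative to $I(\bfs^*)$ applied at the index $z$ gives $\vert\vert I^\circ(1)\cap\XXX\vert-a/z\vert<\delta M/z^{h+1}$, and combining this with the displayed bound at $s=1$ yields $\vert a-C\vert<2\delta M/z^h$. Second, from $\sum_{s=1}^z\vert I(\bfs^{**},s)\cap\XXX\vert=b$ together with balancedness relative to $I(\bfs^{**})$ applied at the index $z$ one gets
\[
\left\vert\sum_{s=2}^z\vert I^\circ(s)\cap\XXX\vert-\frac{(z-1)b}{z}\right\vert<\frac{\delta M}{z^{h+1}};
\]
on the other hand $\sum_{s=2}^z\vert I^\circ(s)\cap\XXX\vert=C-\vert I^\circ(1)\cap\XXX\vert$, so the displayed bound at $s=1$ places the same quantity within $\delta M/z^{h+1}$ of $(z-1)C/z$. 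Comparing the two estimates gives $\tfrac{z-1}{z}\vert b-C\vert<2\delta M/z^{h+1}$, i.e. $\vert b-C\vert<2\delta M/((z-1)z^h)$.

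Adding the two bounds by the triangle inequality,
\[
\vert a-b\vert\le\vert a-C\vert+\vert C-b\vert<\frac{2\delta M}{z^h}+\frac{2\delta M}{(z-1)z^h}=\frac{2z}{z-1}\cdot\frac{\delta M}{z^h}.
\]
Since $z=2^{z_1}$ with $z_1\ge2$, we have $z\ge4$, hence $2z/(z-1)\le8/3<3$, and therefore $\vert a-b\vert<3\delta M/z^h$, contradicting \eqref{eq6.11}. Consequently some $s\in\{1,\ldots,z\}$ must satisfy \eqref{eq6.23}.

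I expect the only delicate point to be bookkeeping rather than a genuine obstacle: $I^\circ$ is not one of the special intervals $I(\bfs)$, so the Case~2 balancedness hypothesis cannot be invoked for it directly — this is precisely what the lemma has to establish — and every estimate must instead be routed through the subintervals of $I(\bfs^*)$ and $I(\bfs^{**})$, which happen to be governed by the two very different densities $a/z$ and $b/z$. One must also keep careful track of the accumulated constants so that the total error $\tfrac{2z}{z-1}\cdot\tfrac{\delta M}{z^h}$ stays strictly below the threshold $3\delta M/z^h$ of \eqref{eq6.11}; this is exactly where the standing assumption $z\ge4$ (i.e. $z_1\ge2$) is used.
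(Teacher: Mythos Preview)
Your argument is correct. Both you and the paper route every estimate through the balancedness of $\XXX$ relative to $I(\bfs^*)$ and $I(\bfs^{**})$ together with \eqref{eq6.11}, so the underlying ingredients are identical; the packaging, however, differs. The paper proceeds constructively: it first disposes of the one-sided case $\vert I^\circ(1)\cap\XXX\vert-\vert I^\circ\cap\XXX\vert/z>\delta M/z^{h+1}$, then under the opposite one-sided inequality uses the identity $\vert I^\circ\cap\XXX\vert=\vert I^\circ(1)\cap\XXX\vert+\vert I(\bfs^{**})\cap\XXX\vert-\vert I(\bfs^{**},z)\cap\XXX\vert$ to force $\vert I^\circ(2)\cap\XXX\vert<\vert I^\circ\cap\XXX\vert/z-(z-3)\delta M/z^{h+1}$, thereby pinpointing $s\in\{1,2\}$. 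Your contradiction approach is more symmetric: by bounding $\vert a-C\vert$ via $s=1$ and $\vert b-C\vert$ via the remaining $s$, you obtain $\vert a-b\vert<\tfrac{2z}{z-1}\cdot\delta M/z^{h}<3\delta M/z^{h}$ directly, which handles both signs of \eqref{eq6.11} at once and makes the role of $z\ge4$ entirely transparent. The paper's version has the slight advantage of actually naming the witness $s$; yours has the advantage of cleaner bookkeeping and not tacitly assuming $\vert I(\bfs^*)\cap\XXX\vert>\vert I(\bfs^{**})\cap\XXX\vert$.
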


\begin{proof}
Suppose that
\textcolor{white}{xxxxxxxxxxxxxxxxxxxxxxxxxxxxxx}
\begin{displaymath}
\vert I^\circ(1)\cap\XXX\vert-\frac{\vert I^\circ\cap\XXX\vert}{z}>\frac{\delta M}{z^{h+1}}.
\end{displaymath}
Then clearly \eqref{eq6.23} holds with $s=1$.
Thus we may assume that
\begin{equation}\label{eq6.24}
\vert I^\circ(1)\cap\XXX\vert-\frac{\vert I^\circ\cap\XXX\vert}{z}\le\frac{\delta M}{z^{h+1}}.
\end{equation}
Suppose next that
\textcolor{white}{xxxxxxxxxxxxxxxxxxxxxxxxxxxxxx}
\begin{displaymath}
\vert I^\circ\cap\XXX\vert\le\vert I(\bfs^{**})\cap\XXX\vert+\frac{\delta M}{z^h}.
\end{displaymath}
Combining this with \eqref{eq6.1} and \eqref{eq6.11}, we deduce that
\begin{displaymath}
\vert I^\circ(1)\cap\XXX\vert
>\frac{\vert I(\bfs^*)\cap\XXX\vert}{z}-\frac{\delta M}{z^{h+1}}
\ge\frac{\vert I(\bfs^{**})\cap\XXX\vert}{z}+\frac{2\delta M}{z^{h+1}}
\ge\frac{\vert I^\circ\cap\XXX\vert}{z}+\frac{\delta M}{z^{h+1}},
\end{displaymath}
contradicting \eqref{eq6.24}.
Thus we may assume that
\begin{equation}\label{eq6.25}
\vert I^\circ\cap\XXX\vert>\vert I(\bfs^{**})\cap\XXX\vert+\frac{\delta M}{z^h}.
\end{equation}
It is not difficult to check that
\begin{equation}\label{eq6.26}
\vert I^\circ\cap\XXX\vert=\vert I^\circ(1)\cap\XXX\vert+\vert I(\bfs^{**})\cap\XXX\vert-\vert I(\bfs^{**},z)\cap\XXX\vert.
\end{equation}
Combining \eqref{eq6.25} and \eqref{eq6.26}, and then applying \eqref{eq6.1}, we deduce that
\begin{displaymath}
\frac{\delta M}{z^h}
<\vert I^\circ(1)\cap\XXX\vert-\vert I(\bfs^{**},z)\cap\XXX\vert
<\frac{\vert I^\circ\cap\XXX\vert}{z}-\frac{\vert I(\bfs^{**})\cap\XXX\vert}{z}+\frac{2\delta M}{z^{h+1}}.
\end{displaymath}
so that
\textcolor{white}{xxxxxxxxxxxxxxxxxxxxxxxxxxxxxx}
\begin{equation}\label{eq6.27}
\frac{(z-2)\delta M}{z^{h+1}}<\frac{\vert I^\circ\cap\XXX\vert}{z}-\frac{\vert I(\bfs^{**})\cap\XXX\vert}{z}.
\end{equation}
Finally, using \eqref{eq6.1} again and combining with \eqref{eq6.27}, we conclude that
\begin{displaymath}
\vert I^\circ(2)\cap\XXX\vert
<\frac{\vert I(\bfs^{**})\cap\XXX\vert}{z}+\frac{\delta M}{z^{h+1}}
<\frac{\vert I^\circ\cap\XXX\vert}{z}-\frac{(z-3)\delta M}{z^{h+1}}.
\end{displaymath}
This gives \eqref{eq6.23} with $s=2$, on noting that $z\ge4$.
\end{proof}

Observe that \eqref{eq6.23} is the analog of \eqref{eq5.1} in Case~1.
We can therefore repeat the method of global spreading of the local imbalance via the flow in direction~$\theta$,
and obtain an analog of the estimate \eqref{eq5.25}.

Recall that we are considering Case~2 here.
There exists a set $\TTT_2$ with cardinality $\vert\TTT_2\vert\ge\eta(k-1)/2$
such that for each of the $\vert\TTT_2\vert$ integers $h$ of the form
\eqref{eq4.34}, we have an estimate of the form \eqref{eq5.25}.
Analogs of the estimates \eqref{eq5.26} and \eqref{eq5.27} follow.
Choosing $k$ sufficiently large as in Case~1 now ensures that Case~2A is impossible.
It remains to study Case~2B.

%
%

\section{Completing the proof}\label{sec7}

Before we study Case~2B and deduce Theorem~\ref{thm1}, we need to first analyze all the constants and parameters
that arise from the argument thus far.

First of all, to ensure that Cases 1 and~2A are impossible, we need to choose the integer $k$ to satisfy the inequality
\begin{equation}\label{eq7.1}
k-1>\frac{64c_0^2c_1A^3z}{c_6\eta\delta^3}.
\end{equation}

The constants $c_0,c_2,c_3,c_4,c_5,c_6$ all depend at most on~$\PPP$.
Here $c_0$ is required to satisfy $c_0\ge4$ in the hypotheses of Lemmas \ref{lem2.2} and~\ref{lem2.3},
and must also satisfy the first inequality in \eqref{eq4.30}.
Thus
\begin{displaymath}
c_0\ge\max\left\{4,\frac{16C^\star}{\pi\eps}\right\},
\end{displaymath}
where $C^\star=C^\star(\PPP)$ is the constant of Vorobets in Lemma~\ref{lem2.1}.
On the other hand, $c_2,c_3,c_4$ satisfy \eqref{eq2.4} and \eqref{eq2.8}, whereas $c_5,c_6$ arise respectively from Lemmas \ref{lem2.2}
and~\ref{lem2.3}.
Meanwhile, the constant $c_1$ depends on the direction $\theta$ as well, but has lower and upper bounds $c_2$ and $c_3$ respectively.

The constants $c_8$ and $c_9$ arise from Lemma~\ref{lem6.2} and its proof, with $c_8=9(c_9+2)$.
We next show that the parameter $q$ in \eqref{eq6.20} satisfies $q\le c_9$ for some constant $c_9=c_9(\PPP)>0$, subject to the extra condition
\begin{equation}\label{eq7.2}
\delta M\ge16z^{h+1}(1+\log q).
\end{equation}
Indeed, combining \eqref{eq7.2} with \eqref{eq2.4}, \eqref{eq4.18}, \eqref{eq4.34} and \eqref{eq6.20}, we deduce that
\begin{equation}\label{eq7.3}
q\le\frac{4z^h}{c_62^{n_1+z_1i_t}}=\frac{4c_0^2c_1}{c_6}\le\frac{4c_0^2c_3}{c_6}=c_9,
\end{equation}
a bound that depends at most on~$\PPP$.

As well as ensuring that \eqref{eq7.2} holds, we also need to make sure that the condition \eqref{eq5.10} concerning the parameter $M$ is satisfied,
as are the conditions \eqref{eq5.6}, \eqref{eq5.19} and \eqref{eq6.16} relating the parameters $\delta$, $M$, $M_1$, $z$, $h$ and~$A$.

Given $\eps>0$, we choose $\delta$ to satisfy
\begin{equation}\label{eq7.4}
c_8\delta=9(c_9+2)\delta=\frac{\eps^2}{4}<\frac{\eps}{4},
\end{equation}
so that the exists some constant $c_{10}=c_{10}(\PPP)>0$ such that
\begin{equation}\label{eq7.5}
\delta=c_{10}\eps^2.
\end{equation}
Next, the expression \eqref{eq5.11} for $A$ and the inequality \eqref{eq7.3} for $c_9$ gives the bound
\begin{equation}\label{eq7.6}
A\le c_9.
\end{equation}
Then the condition \eqref{eq6.16}, which is equivalent to $3\delta z\ge8A+24\delta$, is guaranteed if the number~$z$,
which is an integer power of~$2$, is defined by
\begin{equation}\label{eq7.7}
\frac{2c_{11}}{\eps^2}>z\ge\frac{c_{11}}{\eps^2}\ge\frac{3c_9}{\delta}+8
\end{equation}
where the constant $c_{11}=c_{11}(\PPP)>0$ is sufficiently large to satisfy the condition
\begin{equation}\label{eq7.8}
c_{11}\ge\frac{4c_9}{c_{10}}.
\end{equation}
Combining \eqref{eq4.16}, \eqref{eq4.18}, \eqref{eq4.31}, \eqref{eq4.33}, \eqref{eq5.11} and \eqref{eq7.5}--\eqref{eq7.8}, we now obtain
\begin{displaymath}
\frac{8Az^{h+1}}{\delta M_1}
=\frac{4Az2^{n_1+z_1i_t}}{\delta2^{n_1+kz_1}}
=\frac{4A}{\delta z^{k-i_t-1}}
\le\frac{4A}{\delta z}
\le\frac{4c_9}{c_{10}c_{11}}
\le1,
\end{displaymath}
so that \eqref{eq5.19} is satisfied.
Meanwhile, using \eqref{eq2.4}, \eqref{eq4.30} and \eqref{eq7.5}--\eqref{eq7.7}, we have
\begin{displaymath}
\frac{64c_0^2c_1A^3z}{c_6\eta\delta^3}\le\frac{256c_0^2c_3c_9^3c_{11}}{c_6c_{10}^3\eps^9}.
\end{displaymath}
Thus, taking the integer $k$ to satisfy
\begin{equation}\label{eq7.9}
k=\left[\frac{c_{12}}{\eps^9}\right]+2<\frac{2c_{12}}{\eps^9},
\quad\mbox{where}\quad
c_{12}=c_{12}(\PPP)=\max\left\{\frac{256c_0^2c_3c_9^3c_{11}}{c_6c_{10}^3},1\right\},
\end{equation}
ensures that \eqref{eq7.1} holds.
It remains to ensure that \eqref{eq5.6}, \eqref{eq5.10} and \eqref{eq7.2} are satisfied.
For \eqref{eq5.6} and \eqref{eq7.2}, using \eqref{eq2.4}, \eqref{eq4.18}, \eqref{eq4.31}, \eqref{eq4.33}, \eqref{eq4.34},
\eqref{eq7.3} and \eqref{eq7.5}, we see that
\textcolor{white}{xxxxxxxxxxxxxxxxxxxxxxxxxxxxxx}
\begin{align}\label{eq7.10}
&
\max\left\{\frac{64z^{h+1}}{\delta},\frac{16z^{h+1}(1+\log q)}{\delta}\right\}
\le\frac{64c_9z^{h+1}}{\delta}
\nonumber
\\
&\quad
\le\frac{64c_0^2c_3c_9z^{n_1/z_1}z^{i_t+1}}{c_{10}\eps^2}
\le\frac{64c_0^2c_3c_9z^{n_1/z_1}z^k}{c_{10}\eps^2}
=\frac{64c_0^2c_3c_9Nz^k}{c_{10}\eps^2}.
\end{align}
For \eqref{eq5.10}, using \eqref{eq2.4}, \eqref{eq4.16} and \eqref{eq4.18}, we see that
\begin{equation}\label{eq7.11}
\max\{2c_0^2c_1,c_6\}2^{n_1+kz_1}
\le\max\{2c_0^2c_3,c_6\}z^{n_1/z_1}z^k
=\max\{2c_0^2c_3,c_6\}Nz^k.
\end{equation}
Finally, note that in view of \eqref{eq7.7} and \eqref{eq7.9}, we have
\begin{equation}\label{eq7.12}
z^k<\left(\frac{2c_{11}}{\eps^2}\right)^{2c_{12}\eps^{-9}}.
\end{equation}
Now let the constant $C=C(\PPP,\eps)>0$ be an integer satisfying
\begin{equation}\label{eq7.13}
C=C(\PPP;\eps)\ge\max\left\{\frac{64c_0^2c_3c_9}{c_{10}\eps^2},2c_0^2c_3,c_6\right\}\left(\frac{2c_{11}}{\eps^2}\right)^{2c_{12}\eps^{-9}}.
\end{equation}
Choosing $M=CN$, it then follows from \eqref{eq7.10}--\eqref{eq7.13} that \eqref{eq5.6}, \eqref{eq5.10} and \eqref{eq7.2} are satisfied.
This completes the analysis of the constants and parameters in our argument.

\begin{proof}[Completion of the proof of Theorem~\ref{thm1}]
We let $\Gamma(\PPP;N;\eps)$ be the collection $\Omega_N^c(\eta)$ of good directions defined in Section~\ref{sec4}
by \eqref{eq4.16}--\eqref{eq4.23} and \eqref{eq4.30}.
In view of \eqref{eq4.29}, we have $\lambda(\Gamma(\PPP;N;\eps))\ge(1-\eps)2\pi$.
Our estimates thus far apply to all directions $\theta\in\Gamma(\PPP;N;\eps)$.

By choosing the constants and parameters appropriately, we have ensured that Cases 1 and~2A are impossible.
Recall that for Case~2B, in view of the choice \eqref{eq7.4}, for every edge of $\PPP$ and for any two intervals $I(\bfs'),I(\bfs'')\in\frakI(E)$,
the inequality
\begin{displaymath}
\vert\vert I(\bfs')\cap\XXX\vert-\vert I(\bfs'')\cap\XXX\vert\vert\le\frac{\eps M}{4z^h}
\end{displaymath}
holds for any of the special choices of integers $h$ given by \eqref{eq4.34} with cardinality $\vert\TTT_2\vert\ge\eta(k-1)/2$.
Consider a particular choice $h$ and keep it fixed.
Then the inequality \eqref{eq6.1} holds for every integer sequence $\bfs=(s_1,\ldots,s_h)\in\{1,\ldots,z\}^h$ and every integer $s\in\{1,\ldots,z\}$.
Note next from \eqref{eq7.5} and \eqref{eq7.7} that $\delta$ is much smaller than $\eps$ and $z$ is much larger than~$1/\eps$.
This allows us to spread the inequality \eqref{eq6.12} to other intervals of length $z^{-h}$ within $\overline{\psi(E)}$
beyond those special intervals in $\frakI(E)$, with a slightly weaker inequality
\begin{equation}\label{eq7.14}
\vert\vert I'\cap\XXX\vert-\vert I''\cap\XXX\vert\vert\le\frac{\eps M}{3z^h},
\end{equation}
where $I'$ and $I''$ are intervals in $\overline{\psi(E)}$ with length~$z^{-h}$, obtained from the special intervals in $\frakI(E)$ by shifts
by integer multiples of $z^{-h-1}$.

The estimate \eqref{eq7.14} has the message that every edge $E$ of $\PPP$
exhibits \textit{almost uniform distribution of the hitting points with its own density}.
However, these edge-dependent densities cannot be substantially different, as the geodesic flow goes from one edge to the next,
transporting the density from one to the next, and thus enforces almost equality.
Thus the finite number of edge-dependent densities turn out to be almost the same, with $\eps$ error.
This completes the proof.
\end{proof}

%
%

\end{document}